\newcommand{\R}{\mathbb{R}}
\newcommand{\C}{\mathbb{C}}
\newcommand{\Z}{\mathbb{Z}}
\newcommand{\im}{\textrm{Im}}
\newcommand{\id}{\textrm{id}}
\newcommand{\K}{\mathscr{K}}
\newcommand{\EK}{\mathscr{EK}}
\newcommand{\A}{\mathscr{A}}
\newcommand{\pr}{\mathrm{pr}}
\newcommand{\N}{\mathbb{N}}
\newcommand{\HH}{\mathscr{H}}
\newtheorem{thm}[equation]{Theorem}
\newtheorem{lem}[equation]{Lemma}
\newtheorem{prop}[equation]{Proposition}
\newtheorem{cor}[equation]{Corollary}
\newtheorem{conj}[equation]{Conjecture}
\theoremstyle{plain}
\newtheorem{defn}[equation]{Definition}
\newtheorem{ex}[equation]{Example}
\newtheorem{rem}[equation]{Remark}
\theoremstyle{nonumberplain}
\newtheorem{proof}{Proof}
\theoremstyle{plain}
\title{The structure of groups of multigerm equivalences}
\author{Aasa Feragen\footnote{1) Department of Computer Science, University of Copenhagen, Universitetsparken 1, 2100 Copenhagen, Denmark; aasa@diku.dk.}
 \and Andrew du Plessis\footnote{ Department of Mathematical Sciences, Aarhus University, Ny Munkegade 118, Bldg 1530, 8000 Aarhus, Denmark.}
}
\begin{document}

\maketitle

\begin{abstract}
We study the structure of classical groups of equivalences for smooth multigerms $f \colon (N,S) \to (P,y)$, and extend several known results for monogerm equivalences to the case of mulitgerms. In particular, we study the group $\A$ of source- and target diffeomorphism germs, and its stabilizer $\A_f$. For monogerms $f$ it is well-known that if $f$ is finitely $\A$-determined, then $\A_f$ has a maximal compact subgroup $MC(\A_f)$, unique up to conjugacy, and $\A_f/MC(\A_f)$ is contractible. We prove the same result for finitely $\A$-determined multigerms $f$. Moreover, we show that for a ministable multigerm $f$, the maximal compact subgroup $MC(\A_f)$ decomposes as a product of maximal compact subgroups $MC(\A_{g_i})$ for suitable representatives $g_i$ of the monogerm components of $f$. We study a product decomposition of $MC(\A_f)$ in terms of $MC(\mathscr{R}_f)$ and a group of target diffeomorphisms, and conjecture a decomposition theorem. Finally, we show that for a large class of maps, maximal compact subgroups are small and easy to compute.
%\keywords{singularities, groups of diffeomorphism germs \and symmetries of multigerms \and maximal compact subgroup \and contractible quotient \and right-left equivalence \and $\A$-equivalence}
% \PACS{PACS code1 \and PACS code2 \and more}
% \subclass{MSC code1 \and MSC code2 \and more}
\end{abstract}

\section{Introduction}

When studying global properties of smooth, singular maps, a typical approach is to first solve problems locally, and then glue the local solutions together. When the map $F$ in question is stable, the typical local situation is the following: Singularities of $F$ are found along stratified subsets of source and target of $F$. Each stratum consists of points where a certain singular germ $f$ appears as a singularity of $F$. These strata are submanifolds of source and target, respectively, and in tubular neighborhoods of these submanifolds, $F$ restricts to a fibered family of germs $f$, as illustrated in fig.~\ref{tubedmap}. Such decompositions are discussed in detail in~\cite{feragenthesis}. 

The singular germs are determined up to changes of local coordinates in source and target of the germ, also called a \emph{right-left equivalence}, or, in modern terminology, an $\A$-equivalence. Two germs $f, g \colon (\R^n,0) \to (\R^p,0)$ are right-left-equivalent, and hence define the same singularity, if there exist diffeomorphism germs $\phi \colon (\R^n,0) \to (\R^n,0)$ and $\psi \colon (\R^p,0) \to (\R^p,0)$ such that $g = \psi \circ f \circ \phi$. The group $\A$ of all such equivalences $(\phi, \psi)$ coincides with the product of groups $\mathscr{R} \times \mathscr{L}$, where $\mathscr{R}$ consists of source diffeomorphism germs $\phi$ and $\mathscr{L}$ consists of target diffeomorphism germs $\psi$. These groups all act on the space of multigerms $f \colon (\R^n,0) \to (\R^p,0)$ through composition of maps, e.g., $(\phi, \psi) \cdot f = \psi \circ f \circ \phi$.

\begin{figure}
\centering
\includegraphics[height=3.5cm]{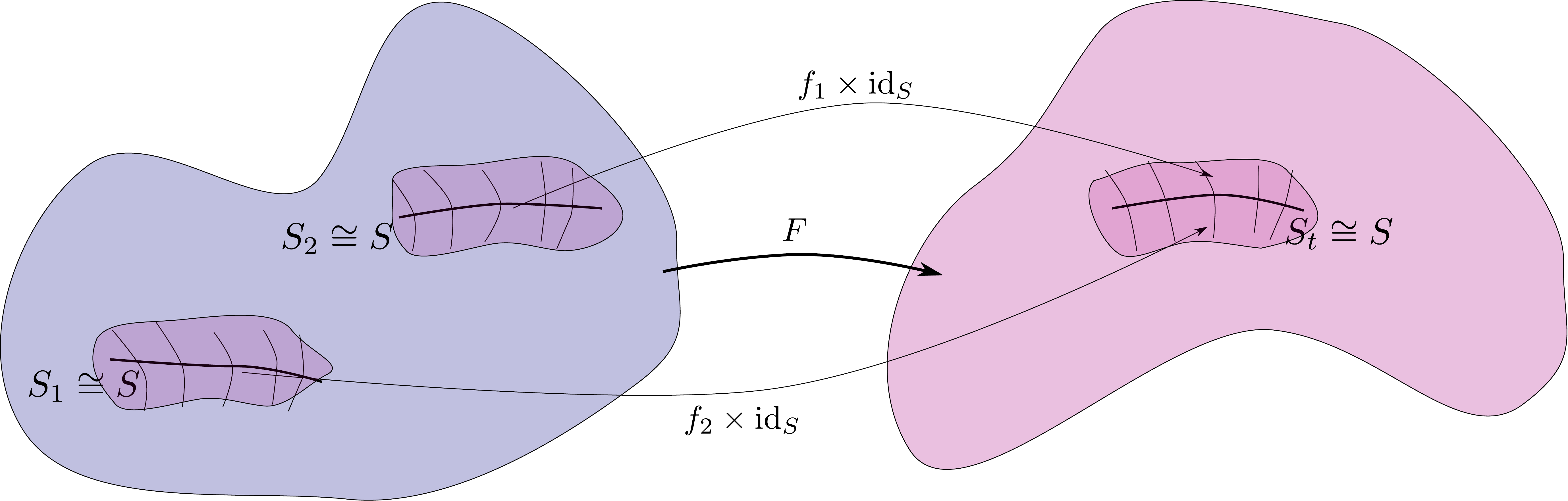}
\caption{Along the two submanifolds $S_1$ and $S_2$ of the source manifold $N$, we find singularities $f_1$ and $f_2$, which are both mapped onto the submanifold $S_t$ of the target manifold $P$. If $F$ is stable, then in tubular neighborhoods $T_1$ and $T_2$, $F$ will restrict to a fibered map $f \times \id_S$, where $f = f_1 \sqcup f_2$. In order to combine constructions on such local decompositions back into a global construction, we need to understand the symmetry group $\A_f $.}
\label{tubedmap}
\end{figure}

The fibered family of singularities in fig.~\ref{tubedmap} defines a fiber bundle, whose structure group is the stabilizer $\A_f$ of $\A$-equivalences leaving $f$ invariant. In order to patch local constructions on such local fiber bundles together, it becomes essential to understand $\A_f$. In reality, the germ $f$ will often be a multigerm, where each point in the target of the map corresponds to several source points, just like in fig.~\ref{tubedmap}. Thus, we need to understand $\A_f$ in the case where $f$ has several source basepoints. For instance, knowledge of the structure of $\A_f$ for multigerms $f$ is used in \cite{feragen,feragenthesis} to prove topological stability of maps.

In this article, we show that given a finitely $\A$-determined multigerm $f$, the group $\A_f$ has a maximal compact subgroup, which is unique up to conjugacy. We denote an arbitrary representative of the conjugacy class by $MC(\A_f)$, and we show that the quotient $\A_f/MC(\A_f)$ is contractible, where contractibility is defined through a smooth map extension property. Geometrically, this means that we can reduce the non-topological structure group $\A_f$ of the tubular neighborhood bundle to the compact, finite-dimensional Lie group given by $MC(\A_f)$. 

Most of the results presented in this article have been proven for monogerms by K.~J\"anich \cite{janich}, C.T.C.~Wall \cite{wallsymm} and R.~Rim{\'a}nyi \cite{rimanyi}. Other groups of germ equivalences have also been studied. In his thesis \cite[Theorem~1.6.3]{rimanyithesis}, Rim{\'a}nyi states our main theorem in the case of $\A$-equivalences for stable multigerms, without proof. 

For some classical groups of germ equivalences, the step from monogerms to multigerms is trivial. It is easy to see that we can decompose the groups $\K$ and $\K_f$, defined on p.~\pageref{kdef}, for multigerms $f = f_1 \sqcup \ldots \sqcup f_s \colon \bigsqcup_s (\R^n,0) \to (\R^p,0)$ to a product of $\K$-groups for monogerms: $\K(s-\textrm{multigerms}) = \K^s$ and $\K_f = \K_{f_1} \times \ldots \times \K_{f_s}$. \label{unionK} Similarly, for the group of source diffeomorphisms, or $\mathscr{R}$-\emph{equivalences}, we have $\mathscr{R}(s-\textrm{multigerms}) = \mathscr{R}^s$ and $\mathscr{R}_f = \mathscr{R}_{f_1} \times \ldots \times \mathscr{R}_{f_s}$. However, in the case of $\A$-equivalences, the multigerm case does not in any obvious way reduce to the monogerm case.

By lemma~\ref{standardstablemultigerm}, any stable multigerm $F \colon (\R^n, S) \to (\R^p,0)$ with $|S|=s<\infty$ admits a decomposition
\begin{equation} \label{multigermdecomp}
\left( \bigsqcup_{i=1}^s \sigma_i \circ \left( f_i \times \id_{\R^{p-p_i-d}} \right) \right) \times \id_{\R^d} \colon \left( \bigsqcup_{i=1}^s \R^{n_i} \times \prod_{j \neq i} \R^{p_j} \right) \times \R^d \to \left( \prod_{i=1}^s \R^{p_i} \right) \times \R^d
\end{equation}
in suitably chosen coordinates, where the $f_i$ are ministable germs which are $\EK$-equivalent to the germs of $F$ at points in $S$ (see \cite[p.~30]{pw}). If $F$ is ministable, then $d$ will be $0$. (In Rim{\'a}nyi's terminology, the $f_i$ will be \emph{roots of their kinds}.)

There is a natural embedding of the product $\A_{f_1} \times \ldots \times \A_{f_s}$ into $\A_F$, given by
\begin{equation} \label{aemb}
\iota \colon \left( (\phi_1, \psi_1), \ldots, (\phi_s, \psi_s)\right) \mapsto \left( \bigsqcup_{i=1}^s \sigma_i \circ (\phi_i \times \prod_{j=1\\ j \neq i}^s \psi_j) , (\psi_1, \ldots, \psi_s) \right),
\end{equation}
where $\sigma_i$ moves the first component to the $i^{\textrm{th}}$. However, there is no guarantee that this embedding is surjective. Suppose that there are two ways of decomposing $F$ as in (\ref{multigermdecomp}), and suppose for simplicity that $d=0$. That is, there exist some diffeomorphism germs $\Phi_i$, $i=1, \ldots, s$ and $\Psi$, such that the following commutes:

\[
\begin{diagram}
\dgARROWLENGTH=6em
\node{\bigsqcup_{i=1}^s \R^{n_i} \times \prod_{j \neq i} \R^{p_j}} \arrow{e,t}{\bigsqcup_{i=1}^s \sigma_i \circ (f_i \times \id)}
\arrow{s,l}{\bigsqcup_{i=1}^s \Phi_i} \node{\prod_{i=1}^s \R^{p_i}} \arrow{s,r}{\Psi}\\
\node{\bigsqcup_{i=1}^s \R^{n_i} \prod_{j \neq i} \R^{p_j}} \arrow{e,b}{\bigsqcup_{i=1}^s \sigma_i \circ (f_i \times \id)} \node{\prod_{i=1}^s \R^{p_i}}
\end{diagram}
\]

Given an element $\iota\left( (\phi_1, \psi_1), \ldots, (\phi_s, \psi_s) \right) \in \A_F$, there is no obvious guarantee that
\[
\alpha = \left( \bigsqcup_{i=1}^s \Phi_i \circ \left( \sigma_i \circ (\phi_i \times \prod_{j \neq i} \psi_j) \right), \Psi \circ (\psi_1, \ldots, \psi_s) \right)
\]
is of the form on the right hand side of (\ref{aemb}). What is easy to show is that the axes
\[
\sigma_i \left( \R^{n_i} \times \{0\} \right) \subset \sigma_i \left(\R^{n_i} \times \prod \R^{p_j} \right),
\]
are left invariant under $\alpha$, since this is where the singularities $f_i$ are found, but this is not enough.

We had hoped that it might be easier to find a decomposition for compact subgroups $G < \A_{f}$, as these are conjugate to linear subgroups of $\A$. However, we meet yet another problem -- can we find coordinates that \emph{simultaneously} give $f$ in the form (\ref{multigermdecomp}) \emph{and} linearize $G$? This problem might be solved by replacing $G$ by the $1$-jet of $G$, which is isomorphic to $G$ -- but $j^1G$ would not generally leave $f$ invariant, and hence would not be a subgroup of $\A_f$.

We shall, indeed, see that for statements concerning maximal compact subgroups of $\A_f$, we \emph{can} reduce to the monogerm case -- but this is not trivial. For statements concerning all of $\A_f$ -- in particular concerning the contractibility of the quotient $\A_f/MC(\A_f)$ -- we need to reprove the theorems for multigerms. We shall also investigate a decomposition theorem for $\A_f$ in terms of $\mathscr{R}_f$ and a subgroup of $\mathscr{L}$. Finally, we show that for a large class of multigerms $f$, the maximal compact subgroups $MC(\A_f)$ are very simple.

Some of the results presented in this article can be found in A.~Feragen's PhD thesis \cite{feragenthesis}, to which we refer for a higher level of detail.

\section{Preliminaries and terminology} \label{preliminaries}

We denote by $\mathscr{E}(n)$ the ring of function germs $(\R^n,0) \to \R$, and denote by $m(n)$ the ideal in $\mathscr{E}(n)$ consisting of germs that map $0$ to $0$. We write $\mathscr{E}(n,p)$ for the set of germs $(\R^n,0) \to \R^p$, and note that $\mathscr{E}(n,p) \cong \bigoplus_p \mathscr{E}(n)$.

Let $f \colon (M,S) \to (N,y)$ be a smooth germ. Define the set $\theta_f$ of \emph{vector fields along $f$} to be the set 
\[
\{ \theta \colon N \to TP | \pi_P \circ \theta = f\},
\]
and set
\[
\begin{array}{l}
\theta_S=\theta_{(N,S)}=\theta_{\id_{(N,S)}}, \textrm{ and}\\
\theta_y=\theta_{(P,y)}=\theta_{\id_{(P,y)}},
\end{array}
\]
where we choose notation depending on how explicit we need to be. We define $tf \colon \theta_S \to \theta_f$ and $wf \colon \theta_y \to \theta_f$ by setting \label{infstabpageref}
\[
\begin{array}{l}
tf(\xi)=Tf \circ \xi,\\
wf(\eta)=\eta \circ f.
\end{array} 
\]
In this article, $S$ will always be a finite set $\{x_1, \ldots, x_s\}$, and we can, up to a choice of local coordinates, always write a germ $f \colon (N,S) \to (P,y)$ in the form

\begin{equation} \label{multigermnormalform1}
f:=f_1 \sqcup f_2 \sqcup \ldots \sqcup f_s \colon \underbrace{(\R^n,0) \sqcup \ldots \sqcup (\R^n,0)}_{s \textrm{ copies}} \to (\R^p,0),
\end{equation}

where $n$ and $p$ are the dimensions of $N$ and $P$, respectively.

We shall need the following standard choice of coordinates for stable multigerms:

\begin{lem} \label{standardstablemultigerm}
Let $f \colon \bigsqcup_{i = 1}^s (\R^n, 0) \to (\R^p,0)$ be a stable multigerm. By choosing suitable local coordinates, we may assume that $f$ is of the form
\[
F \colon 
\bigsqcup_{i=1}^s \left( \R^{n_i} \times \prod_{j \neq i} \R^{p_j} \times \R^d, 0 \right) \to \left( \prod_{j=1}^s \R^{p_j} \times \R^d, 0 \right),
\]
where $F=\bigsqcup_{i=1}^s \sigma_i \circ (F_i \times \emph{\id}_{\prod_{j \neq i} t(F_j) \times \R^d})$; $F_i \colon \R^{n_i} \to \R^{p_i}$ is ministable; the local algebra of $F_i$ is isomorphic to the local algebra of the germ of $f$ at $x_i$; $\sigma_i$ moves the $1^{st}$ coordinate to the $i^{\textrm{th}}$ in $\prod_{i=1}^s t(F_i)$; and $d \in \N_0$ is chosen to get the appropriate dimensions. Here, $s(F_j)$ denotes the source of $F_j$, and $t(F_j)$ denotes the target of $F_j$.
\end{lem}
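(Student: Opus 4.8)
The plan is to derive this from Mather's classification of stable map-germs. First I would record the two facts that are needed. (1) If $f$ is a stable multigerm then it is infinitesimally stable, $tf(\theta_S)+wf(\theta_y)=\theta_f$, and projecting this identity onto the summand of $\theta_f$ indexed by the branch at a point $x_i\in S$ shows that the germ of $f$ at $x_i$ is itself stable; consequently it is $\A$-equivalent to a suspension of an essentially unique ministable germ $F_i\colon(\R^{n_i},0)\to(\R^{p_i},0)$, which has the same local algebra $Q_i$ as the germ of $f$ at $x_i$ (suspension does not change the local algebra) and satisfies $n_i-p_i=n-p$. (2) By Mather's classification (see also \cite[p.~30]{pw}), a stable multigerm is determined up to $\A$-equivalence by its source dimension, its target dimension and the isomorphism classes of the local algebras of its monogerm components; furthermore, among stable multigerms with source-target difference $n-p$ whose monogerm components have local algebras $Q_1,\dots,Q_s$, the minimal target dimension equals $\sum_j p_j$ and is attained by the transverse union of the germs $F_i$. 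Given (1) and (2), it suffices to exhibit a stable multigerm of the form displayed in the statement whose monogerm components realise $Q_1,\dots,Q_s$.

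I would take the transverse union $G:=\bigsqcup_{i=1}^s\sigma_i\circ(F_i\times\id_{\prod_{j\neq i}\R^{p_j}})$ mapping into $\prod_{j=1}^s\R^{p_j}$, which is stable with monogerm local algebras $Q_i$ and source-target difference $n_i-p_i=n-p$, and then set $\widetilde G:=G\times\id_{\R^d}$ with $d:=p-\sum_j p_j$. Since $f$ is a stable multigerm of source-target difference $n-p$ with monogerm local algebras $Q_i$, the minimality clause of (2) forces $p\geq\sum_j p_j$, so $d\in\N_0$. Moreover $\widetilde G$ is precisely a germ of the form displayed in the statement: its $i$-th source branch has dimension $n_i+\sum_{j\neq i}p_j+d=(n-p+p_i)+\sum_{j\neq i}p_j+(p-\sum_j p_j)=n$, its target has dimension $\sum_j p_j+d=p$, it is stable (being a suspension of the stable germ $G$), and its monogerm components have local algebras $Q_i$. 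Hence $f$ and $\widetilde G$ are stable multigerms with the same number of branches, equal source and target dimensions, and, under the evident matching of branches, isomorphic monogerm local algebras, so by the uniqueness part of (2) they are $\A$-equivalent.

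A choice of source and target coordinates realising this $\A$-equivalence then brings $f$ into the asserted form. The only genuinely non-elementary ingredient here is Mather's classification itself -- in particular the uniqueness of stable germs with prescribed monogerm local algebras, and the stability and minimality of the transverse union $G$ -- so that is the step I would treat with care; the remainder (passing from stability of the multigerm $f$ to stability of each branch, the existence and uniqueness of the ministable germs $F_i$, and the dimension bookkeeping $n_i=n-p+p_i$ and $d=p-\sum_j p_j$) is routine.
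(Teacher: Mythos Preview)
Your argument is correct and rests on the same core fact as the paper's proof: Mather's classification of stable multigerms by the local algebras of their branches. The only difference is packaging. You invoke the classification in one step (stable multigerms with matching dimensions and matching monogerm local algebras are $\A$-equivalent), whereas the paper splits it into two: first, isomorphic local algebras imply $F$ and $f$ are $\K$-equivalent (Mather~IV), and second, two stable germs that are $\K$-equivalent are $\A$-equivalent (Gibson et al.). Your version has the advantage of being more self-contained---you explicitly construct the $F_i$ from the branches of $f$, verify the dimension identities, and justify $d\geq 0$ via the minimality of the transverse union, points the paper leaves implicit---but the substance is the same.
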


\begin{proof}
Since the $F_i$ are unfoldings of the $f_i$, the local algebras $Q(F_i)$ and $Q(f_i)$ are isomorphic, and as a consequence also the local algebras $Q(F)$ and $Q(f)$ are isomorphic \cite[Theorem~2.1]{ma4}. Hence $F$ and $f$ are $\K$-equivalent. But $F$ and $f$ are also both stable, hence $F$ and $f$ are $\A$-equivalent \cite[Chapter III Theorem 4.3]{gib}.\qed
\end{proof}

\section{Groups of multigerm equivalences}

Suppose given a multigerm $f = f_1 \sqcup f_2 \sqcup \ldots \sqcup f_s$ as in (\ref{multigermnormalform1}). Recall that if $\mathscr{R}$ is the group of diffeomorphism germs of a source component $(\R^n,0)$, and $\mathscr{L}$ is the group of diffeomorphism germs of the target $(\R^p,0)$, then $\A$ denotes the group of equivalences on the space of multigerms
\begin{equation} \label{germspace}
C^\infty \left((\R^n,0) \sqcup \ldots \sqcup (\R^n,0), (\R^p,0)\right)
\end{equation}
defined by $\bigsqcup_s \mathscr{R} \sqcup \mathscr{L}$. We will write elements of $\A$ either in the form $\phi=(\phi_1, \ldots, \phi_s, \phi_t)$ where the $\phi_i$ are the diffeomorphisms in question, or sometimes in the form $\psi = (\psi_1, \ldots, \psi_s)$ where $\psi_i=\phi_i \times \phi_t$ for each $i$. We use the latter notation when we consider $\psi$ as an element in the group $\K$, as defined below. Note that with this definition, the $\A$-equivalences leave the source base points fixed.

The group $\K$ of equivalences on the space of multigerms (\ref{germspace}) is the group of diffeomorphism germs \label{kdef}
\[
H \colon \left(\bigsqcup_s \R^n \times \R^p, \bigsqcup_s (0,0)\right) \to \left(\bigsqcup_s \R^n \times \R^p, \bigsqcup_s (0,0)\right)
\] 
such that the following diagram commutes:
\[
\begin{diagram}
\node{\bigsqcup_s (\R^n, 0)} \arrow{e,t}{\id \times 0} \arrow{s,l}{H_0} \node{\bigsqcup_s \left(\R^n \times \R^p, (0,0)\right)} \arrow{e,t}{\pr_{\R^n}} \arrow{s,l}{H} \node{\bigsqcup_s (\R^n,0)} \arrow{s,r}{H_0}\\
\node{\bigsqcup_s (\R^n,0)} \arrow{e,b}{\id \times 0} \node{\bigsqcup_s \left(\R^n \times \R^p, (0,0)\right)} \arrow{e,b}{\pr_{\R^n}} \node{\bigsqcup_s (\R^n,0)}
\end{diagram}
\]
where $H_0 = H|\R^n \times \{0\}$. We will write elements of $\K$ in the form $\psi=(\psi_1, \ldots, \psi_s)$ where, in fact, $\psi=\bigsqcup_i \psi_i$. The group $\K$ acts on germs $f$ in the following way: $H \cdot f = g$ if $(\id,f) \circ H_0=H \circ (\id, g)$.

Given a multigerm $f$, we denote by $\A_f$ or $\K_f$ the \emph{stabilizer} of $\A$ or $\K$ at $f$; namely 
\[
\HH_f=\{\psi \in \HH : \psi \cdot f = f\}
\]
for $\HH=\A$ or $\K$.

Following the original definition by J\"{a}nich~\cite{janich}, we define a compact subgroup of $\A$ to be a subgroup $G$ of $\A$ which is conjugate in $\A$ to a compact subgroup of $\bigsqcup_s GL_n \sqcup GL_p < \A$. The definition is reasonable: Suppose that $G < \A$ is isomorphic to some compact Lie group $\tilde{G}$, such that $\tilde{G}$ acts diffeomorphically on $\bigsqcup_s \R^n \sqcup \R^p$ through the isomorphism $\tilde{G} \to G$, keeping the origins fixed (compare with the definition of \cite{dpwil}). Then, in particular, $\tilde{G}$ acts diffeomorphically on each of the $\R^n$ and on $\R^p$. By Bochner's theorem we can choose local coordinates in $(\R^n,0), \ldots, (\R^n, 0)$ and $(\R^p,0)$, respectively, given by diffeomorphism germs $\phi_1, \ldots, \phi_s$ and $\phi_t$, with respect to which $G$ acts linearly on the $\R^n$ and on $\R^p$. These define an element $(\phi_1, \ldots, \phi_s, \phi_t)$ of $\A$, which linearizes $G$ in $\A$.

By analogy, we define a compact subgroup of $\K$ to be a subgroup $G < \K$ which is conjugate in $\K$ to a compact linear subgroup of $\bigsqcup_s GL_n \times GL_p$. Throughout the rest of the article, we let $\HH$ denote $\A$ or $\K$ unless otherwise is specified.

Given a compact subgroup $G$ of a group $\HH$ of diffeomorphism-germs such as $\A$, $\K$ or $\mathscr{R}$, we say that $G$ is a \emph{maximal compact subgroup} of $\HH$ if any other compact subgroup $H$ of $\HH$ is conjugate in $\HH$ to a subgroup of $G$.

We start out by making a few basic observations concerning the groups of $\A$-equivalences and $\K$-equivalences:

\begin{lem} \label{basicobservations}
\begin{itemize}
\item[i)] We have $\A_f < \K_f$, so in particular $MC(\A_f) < MC(\K_f)$, assuming the maximal compact subgroups exist.
\item[ii)] Suppose that $f \colon (\R^n,0) \to (\R^p,0)$ is a monogerm, and consider $\A$ and $\K$ as groups acting on the source and target of $f$. Then $\K \cap GL_{n+p}$ is a subgroup of $\A$, so in particular, $\K_f \cap GL_{n+p} < \A_f$.
\item[iii)] Suppose that a multigerm $f \colon \bigsqcup_s (\R^n,0) \to (\R^p,0)$ is finitely $k-\HH$-determined, and suppose that $G < \HH_f$ is compact. Then we can find an element $\phi \in \HH$ and a subgroup $\tilde{G}$ of $\HH_{\phi \cdot f}$ such that $\phi \cdot f$ is a polynomial of degree $\le k$, and $\tilde{G}$ is a linear group which is conjugate (in $\HH$) to $G$.
\item[iv)] For any compact subgroup $G$ of $\HH$ the restriction of the $1$-jet map $j^1| \colon G \to \bigsqcup_s GL_n \times GL_p$ is injective.
\end{itemize} 
\end{lem}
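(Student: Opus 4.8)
First consider (i). An $\A$-equivalence $\phi=(\phi_1,\dots,\phi_s,\phi_t)$ acts as $\psi=(\psi_1,\dots,\psi_s)$ with $\psi_i=\phi_i\times\phi_t$, and one checks directly from the defining commutative square for $\K$ that such a $\psi$ lies in $\K$ — the key point is that the target diffeomorphism $\phi_t$ is common to all components and does not involve the source coordinates, so the projection-compatibility required of $\K$-elements holds. Since the $\A$-action and the $\K$-action agree under this identification, $\phi\cdot f=f$ forces $\psi\cdot f=f$, giving $\A_f<\K_f$; restricting to compact subgroups and using maximality yields $MC(\A_f)<MC(\K_f)$ (conjugating $MC(\A_f)$ inside $\K_f$ into $MC(\K_f)$). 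For (ii), a linear element of $\K\cap GL_{n+p}$ is a block matrix $\begin{pmatrix}A&0\\ C&D\end{pmatrix}$ with $A\in GL_n$, $D\in GL_p$ (the upper-right block vanishes by the $\K$ projection condition); I would observe that this is $\A$-linear precisely when $C=0$, i.e.\ when the map is of product form. The honest statement is that $\K\cap GL_{n+p}$, as a subgroup, can be conjugated within $\K$ to lie in $GL_n\times GL_p$ by a (lower-triangular unipotent) $\K$-equivalence; then it acts on source and target separately and hence lies in $\A$. So $\K_f\cap GL_{n+p}$ sits inside $\A_f$ after this standard triangularization, which is what is meant.

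For (iii), this is a direct application of the preceding discussion on compact subgroups together with finite determinacy. Since $f$ is finitely $k$-$\HH$-determined and $G<\HH_f$ is compact, $G$ is conjugate in $\HH$ to a linear compact subgroup: by Bochner linearization (as recalled in the text for $\A$, and analogously for $\K$) choose $\phi_0\in\HH$ linearizing $G$, set $G_0=\phi_0 G\phi_0^{-1}$, which is linear and fixes $\phi_0\cdot f$. Now $\phi_0\cdot f$ need not be polynomial, but its $k$-jet $p:=j^k(\phi_0\cdot f)$ is $\HH$-equivalent to $\phi_0\cdot f$ by finite determinacy; moreover, averaging over the compact group $G_0$ (or simply noting that truncation is $G_0$-equivariant, since $G_0$ is linear and the $k$-jet space is a $G_0$-invariant quotient) shows that $G_0$ still fixes $p$, and the conjugating diffeomorphism between $\phi_0\cdot f$ and $p$ can be chosen $G_0$-equivariantly — again by averaging its derivative data over $G_0$. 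Composing, $\phi:=(\text{that conjugator})\circ\phi_0$ and $\tilde G:=$ the resulting conjugate of $G$ do the job; $\tilde G$ is still linear because both $\phi_0$ and the equivariant conjugator were chosen to linearize. I expect this equivariant-finite-determinacy step to be the main obstacle: one must verify that the standard finite-determinacy homotopy (Mather–style integration of a vector field killing the higher-order terms) can be run $G_0$-equivariantly, which follows by averaging the relevant vector field over the compact group $G_0$, but it requires a little care for multigerms since $G_0$ permutes nothing (the $\A$-action fixes source basepoints) yet still acts on all components simultaneously through the shared target factor.

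Finally (iv): if $G<\HH$ is compact then by the above it is conjugate to a linear group, and the $1$-jet map is the identity on a linear group, so it is injective on that conjugate; since $j^1$ intertwines conjugation in $\HH$ with conjugation by $1$-jets, injectivity on one conjugate gives injectivity on $G$. Alternatively, and more self-containedly: an element of $\ker(j^1|_G)$ is an $\HH$-germ which is the identity to first order and lies in a compact group; linearizing $G$ and noting that a nontrivial element of a linear compact group has nontrivial $1$-jet (it \emph{is} its $1$-jet) forces the kernel to be trivial. Either way the statement reduces immediately to the linearization already established in the preliminary discussion, so no further work is needed.
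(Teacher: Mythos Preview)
Your arguments for (i) and (iv) are fine and essentially match the paper's. There is a genuine gap in (ii), and (iii) is correct but takes an unnecessarily heavy route.

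\medskip
\textbf{Part (ii).} You write that a linear element of $\K$ has the form $\begin{pmatrix} A & 0 \\ C & D \end{pmatrix}$, with $C$ possibly nonzero, and then propose to conjugate $C$ away by a lower-triangular unipotent $\K$-equivalence. This misreads the definition of $\K$. The defining diagram for $\K$ has \emph{two} squares: the right one (projection to $\R^n$) forces the upper-right block to vanish, as you note; but the left one says $H \circ (\id \times 0) = (\id \times 0) \circ H_0$, i.e.\ $H$ preserves $\R^n \times \{0\}$. For a linear $H$ this forces $C = 0$ directly. Hence every element of $\K \cap GL_{n+p}$ is already block diagonal and lies in $\A$; no conjugation is needed, and the statement is literal, not ``what is meant''. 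Your proposed fix is also problematic on its own terms: a lower-triangular unipotent matrix $\begin{pmatrix} I & 0 \\ E & I \end{pmatrix}$ with $E \neq 0$ does \emph{not} preserve $\R^n \times \{0\}$, so it is not an element of $\K$ and cannot be used to conjugate within $\K$.

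\medskip
\textbf{Part (iii).} Your argument works, but the appeal to an equivariant finite-determinacy homotopy is unnecessary. Once $G_0 = \phi_0 G \phi_0^{-1}$ is linear and fixes $\phi_0 \cdot f$, it fixes the $k$-jet $j^k(\phi_0 \cdot f)$; and since $G_0$ is linear, for any $g \in G_0$ the germ $g \cdot p$ is again a polynomial of degree $\le k$ with the same $k$-jet as $p$, hence $g \cdot p = p$. So $G_0 < \HH_p$ already, and one may simply take $\tilde G = G_0$ (which is conjugate to $G$ in $\HH$ via $\phi_0$) and $\phi = \tilde\psi \cdot \phi_0$ for any $\tilde\psi$ with $\tilde\psi \cdot (\phi_0 \cdot f) = p$. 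The statement only asks that $\tilde G$ be conjugate to $G$ in $\HH$, not conjugate via $\phi$, so there is no need to make the conjugator $G_0$-equivariant.
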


\begin{proof}
Claim \textit{i)} is trivial. 
\begin{itemize}
\item[\textit{ii)}] An element $H \in \K$ is an element of $\A$ if it can be decomposed as $H = H_1 \times H_2$, where $H_1 \colon (\R^n,0) \to (\R^n,0)$ and $H_2 \colon (\R^p,0) \to (\R^p,0)$. Since $H \in \K$, we know that $H(\R^n,0) \subset (\R^n,0)$, so we set $H_1 = H | (\R^n,0) \times \{0\}$. Since $H$ is a linear diffeomorphism, $H(a) \in H_1$ for some $a \in (\R^{n+p},0)$ implies $a \in H_1$. Then we must have $H(b) \in H_1^\bot = \{0\} \times (\R^p,0):=H_2$, for all $b \in \{0\} \times (\R^p,0)$, since otherwise, we would find $a = b - H^{-1}(\pr_{H_2}(H(b))) \notin H_1$ with $H(a) \in H_1$.

\item[\textit{iii)}] Since $G < \HH_f$ is compact, we can find $\psi \in \HH$ such that $\psi G \psi^{-1}$ is linear, by definition. Now $\tilde{G}=\psi G \psi^{-1} < \HH_{\psi \cdot f}$, and since $f$ is $k-\HH$-determined, so is $\psi \cdot f$. In particular, $\psi \cdot f$ is $\HH$-equivalent to the polynomial representative $p$ of its $k$-jet $j^k (\psi \cdot f)$, by an element $\tilde{\psi} \in \HH$, say:
\[
p=\tilde{\psi} \cdot \psi \cdot f.
\]
We claim that $\tilde{G} < \HH_p$. Since the linear group $\tilde{G}$ leaves $\psi \cdot f$ invariant, it certainly leaves the jet $j^k(\psi \cdot f)$ invariant in $J^k(n,p)$. But then it must leave $p$ invariant, since then, for any $g \in \tilde{G}$, the map $g \cdot p$ is a degree $k$ polynomial representing $j^k(\psi \cdot f)$. Set $\phi=\tilde{\psi} \cdot \psi$, and we are done.

\item[\textit{iv)}] By the definition of a compact subgroup, there exists a choice of coordinates on $\bigsqcup_s \R^n \times \R^p$ such that $G$ acts linearly; now in these coordinates the $1$-jet map is just the inclusion into $\bigsqcup_s GL_n \times GL_p$. The topological properties of the map $j^1$ do not depend on the choice of coordinates; hence $j^1|G$ is injective.  \qed
\end{itemize}
\end{proof}

\subsection{Maximal compact subgroups}

Now we are ready to state and prove the main theorem of the section.

\begin{thm} \label{existenceofmc}
Let $f$ be a finitely $\HH$-determined multigerm as in (\ref{multigermnormalform1}). The group $\HH_f$ has a maximal compact subgroup, which is unique up to conjugation in $\HH_f$.
\end{thm}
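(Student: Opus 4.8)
The plan is to pin down $\HH_f$, up to a contractible normal subgroup, in terms of a finite-dimensional algebraic group, where maximal compact subgroups are classical, and then to lift the conclusion back; this is essentially the strategy of J\"anich and Wall in the monogerm case, and for the present statement (existence and uniqueness only, not a product decomposition) the passage to multigerms needs no new idea. Concretely, let $\Gamma = J^k\HH$ be the group of $k$-jets of elements of $\HH$ — a real linear algebraic group (a product of jet groups of $\mathscr{R}$ and of $\mathscr{L}$ when $\HH = \A$, resp.\ of the monogerm $\K$-group when $\HH = \K$) — acting algebraically on the space of $k$-jets of multigerms. By Lemma~\ref{basicobservations}(iii) we may, if convenient, take $f$ itself to be a polynomial of degree $\le k$, so that the stabilizer $L := \Gamma_{j^kf}$ is literally the stabilizer of the tuple of coefficients of $f$; in any case $L$ is a real algebraic group, hence a Lie group with finitely many connected components, so it has a maximal compact subgroup $MC(L)$, unique up to conjugacy in $L$ by the Cartan--Iwasawa--Malcev (Mostow) theorem.

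Next I would analyse the homomorphism $\pi := j^k|_{\HH_f}\colon \HH_f \to L$ (it lands in $L$ because $g\cdot f = f$ forces $(j^kg)\cdot(j^kf)=j^kf$). It is surjective: lifting $\gamma\in L$ to $g\in\HH$ with $j^kg=\gamma$, the germ $g\cdot f$ has the same $k$-jet as $f$, so by finite $\HH$-determinacy — after enlarging $k$ so that the subgroup $\HH^{(k)}$ of elements with trivial $k$-jet already acts transitively on the germs with $k$-jet $j^kf$ — we may write $g\cdot f = h\cdot f$ with $h\in\HH^{(k)}$, whence $h^{-1}g\in\HH_f$ and $\pi(h^{-1}g)=\gamma$. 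The kernel is $N := \HH_f\cap\HH^{(k)}$, and the crucial point is that $N$ is contractible. One must be careful here: the obvious scaling homotopy $g\mapsto\bigl(x\mapsto t^{-1}g(tx)\bigr)$ that contracts $\HH^{(k)}$ to the identity need not respect the relation $g\cdot f = f$, so instead one contracts $N$ by integrating suitable time-dependent families of vector fields along $f$ furnished by finite determinacy, exactly as in the monogerm arguments. Granting this, $\pi$ presents $\HH_f$ as an extension of the Lie group $L$ by the contractible group $N$, and one checks (again as in the monogerm case) that $\pi$ admits local sections.

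Now for the two conclusions. For existence: the preimage $\pi^{-1}(MC(L))$ is an extension of the compact Lie group $MC(L)$ by the contractible group $N$ admitting local sections, so the successive obstructions to a group-theoretic splitting lie in cohomology of $MC(L)$ with coefficients in the homotopy groups of $N$, which all vanish; hence there is a subgroup $K\subset\HH_f$ carried isomorphically onto $MC(L)$ by $\pi$, and $K$ is compact. For maximality and uniqueness: given any compact $G<\HH_f$, the map $\pi|_G = j^k|_G$ is injective — its kernel is contained in that of $j^1|_G$, which is trivial by Lemma~\ref{basicobservations}(iv) — so $\pi(G)$ is a compact subgroup of $L$, hence conjugate in $L$ into $MC(L)=\pi(K)$; lifting the conjugating element through the surjection $\pi$, we may assume $\pi(G)\subset\pi(K)$. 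Then $G$ and $(\pi|_K)^{-1}(\pi(G))\subset K$ are two subgroups of $\pi^{-1}(\pi(G))$ mapped isomorphically onto $\pi(G)$ — two splittings of an extension with contractible kernel $N$ — and any two such splittings are conjugate by an element of $N\subset\HH_f$. Thus every compact subgroup of $\HH_f$ is conjugate in $\HH_f$ into $K$; in particular $K$ is a maximal compact subgroup, and any maximal compact subgroup is conjugate to $K$.

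The only genuinely hard step is the contractibility of $N$, together with the surjectivity of $\pi$ and the existence of local sections for it: this is precisely where finite $\HH$-determinacy must be used, and where the monogerm techniques of J\"anich, Wall and Rim\'anyi have to be transcribed to the multigerm setting. Everything after that — the lifting of $MC(L)$ and the conjugacy of competing compact subgroups — is formal homotopy theory of group extensions with contractible kernel.
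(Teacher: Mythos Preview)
Your high-level plan---pass to the algebraic jet group $L=\HH^k_{j^kf}$, invoke Iwasawa/Mostow there, and lift back---is exactly the paper's strategy. But your execution diverges from the paper's at the crucial point, and the divergence introduces a real gap.

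You propose to organise the lifting via the short exact sequence $1\to N\to\HH_f\to L\to 1$ with $N=\HH_f\cap\HH^{(k)}$ ``contractible,'' then appeal to obstruction theory to split over $MC(L)$ and to conjugate any two splittings. The problem is that $\HH_f$ and $N$ carry no topology here; the paper is explicit that it does \emph{not} topologise these groups (contractibility of $\A_f/MC(\A_f)$ is \emph{defined} later, in the section on Theorem~\ref{contractibilitythm}, via a bespoke smooth map-extension property, not via a topology). So ``$N$ is contractible,'' ``local sections exist,'' and ``successive obstructions lie in cohomology with coefficients in $\pi_\ast(N)$'' have no meaning in this setting without first building a substantial homotopy theory for germ-groups. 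Your final paragraph concedes that contractibility of $N$ is the hard step; but even granting it in some sense, the splitting and conjugacy-of-splittings statements you need are not available off the shelf.

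The paper sidesteps all of this. For existence it does not split an extension abstractly: it \emph{linearizes} $MC(L)$ via Bochner (Lemma~\ref{stronglin}), and a linear group is literally a subgroup of $\HH$ which then visibly fixes the polynomial representative of $j^kf$; a change of representative of $f$ absorbs this. For uniqueness it replaces your ``two splittings of an extension with contractible kernel are conjugate in $N$'' by the equivariant finite-determinacy Lemma~\ref{equivdeterminacy}: given compact $H<\HH_{f_0}$, one conjugates $j^kH$ into $G_0$ inside $L$, lifts and re-linearizes (second clause of Lemma~\ref{stronglin}), and then Lemma~\ref{equivdeterminacy} manufactures a $(\psi H\psi^{-1})$-equivariant $\alpha\in\HH$ with $\alpha\psi\in\HH_{f_0}$ conjugating $H$ into $G_0$. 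No contractibility of $N$ is used or proved for this theorem; that is a separate, subsequent result. If you want to salvage your outline, the honest route is to replace the obstruction-theoretic steps by exactly these two concrete lemmas.
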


The monogerm version of this theorem was proven by J\"anich \cite{janich} (for $\HH=\mathscr{R}$) and Wall \cite{wallsymm} (for $\HH=\A$ or $\K$) with some completing comments by du Plessis and Wilson \cite[p.~270]{dpwil}, who proved similar results for actions of $\mathscr{R}$, but without finite $\mathscr{R}$-determinacy.

\begin{proof}
Just like in the monogerm case we will need an equivariant finite determinacy condition, as formulated in the lemma below. The proof of the lemma follows the standard proof of $k$-determinacy \cite{ma3}, using vector fields -- but averaging the vector fields over the group $G$ using the Haar integral.

\begin{lem} \label{equivdeterminacy}
Let $f \colon \bigsqcup_s (\R^n, 0) \to (\R^p,0)$ be $k-\HH$-determined, and suppose that $G$ is a compact linear subgroup of $\HH_f$. Then $f$ is $k-G$-determined; that is, if $\tilde{f} \colon \bigsqcup_s (\R^n,0) \to (\R^p,0)$ is $G$-invariant such that $j^k f=j^k \tilde{f}$, then there exists $\phi \in \HH$ such that $\phi \cdot g=g \cdot \phi$ for all $g \in G$, and $\tilde{f}=\phi \cdot f$. In particular, $G < \HH_{\tilde{f}}$, since $g \cdot \tilde{f} = g \cdot \phi \cdot f = \phi \cdot g \cdot f = \phi \cdot f = \tilde{f}$ for all $g \in G$. \qed
\end{lem}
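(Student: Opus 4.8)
The plan is to run the classical homotopy (Thom--Levine) proof of the finite determinacy theorem and to make it $G$-equivariant by averaging the infinitesimal solutions over $G$ against the Haar integral. Write the (linear) $G$-action on the source $\bigsqcup_s\R^n$ as $\rho_S$ and the action on the target $\R^p$ as $\rho_L$ (in the $\K$-case, the linear action on $\bigsqcup_s\R^n\times\R^p$). The hypotheses $G<\HH_f$ and the $G$-invariance of $\tilde f$ say precisely that $f$ and $\tilde f$ are equivariant maps, i.e.\ $\rho_L(g)\circ f=f\circ\rho_S(g)$ and $\rho_L(g)\circ\tilde f=\tilde f\circ\rho_S(g)$ for every $g\in G$. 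Consider the straight-line homotopy $f_t=f+t(\tilde f-f)$, $t\in[0,1]$. Since $j^kf=j^k\tilde f$ we have $j^kf_t=j^kf$, so each $f_t$ is $\HH$-equivalent to $f$ and hence again $k$-$\HH$-determined; and since $\rho_S,\rho_L$ are linear and fix both $f$ and $\tilde f$, each $f_t$ is again equivariant.

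First I would recall that, exactly as in the standard proof of the finite determinacy theorem \cite{ma3}, the $k$-determinacy of $f$ together with this homotopy lets one solve the homological equation
\[
\tilde f-f=tf_t(\xi_t)+wf_t(\eta_t)
\]
by vector fields $\xi_t\in m(n)\theta_S$ and $\eta_t\in m(p)\theta_y$ depending smoothly on $t$, using Mather's infinitesimal criterion and the Malgrange preparation theorem with $t$ as a parameter; in the $\mathscr{R}$-case the term $wf_t(\eta_t)$ is absent, and in the $\K$-case one solves the corresponding equation in the $\K$-tangent space, with $\eta_t$ replaced by a germ of matrix with entries in $m(p)$ acting fibrewise on the target factor. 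This part of the argument I would simply cite.

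The equivariant ingredient is the averaging. Given $\xi_t,\eta_t$ as above, set
\[
\bar\xi_t=\int_G\rho_S(g)\circ\xi_t\circ\rho_S(g)^{-1}\,dg,\qquad \bar\eta_t=\int_G\rho_L(g)\circ\eta_t\circ\rho_L(g)^{-1}\,dg,
\]
the integrals taken against the normalized Haar measure on the compact group $G$. These are again smooth in $t$; they still lie in $m(n)\theta_S$ and $m(p)\theta_y$, because $\rho_S(g),\rho_L(g)$ are linear and fix the origins; and by construction they are $G$-equivariant vector fields. The key point is that $\bar\xi_t,\bar\eta_t$ solve the \emph{same} homological equation: differentiating the relation $f_t\circ\rho_S(g)=\rho_L(g)\circ f_t$ shows that $tf_t$ and $wf_t$ intertwine the $G$-actions induced on $\theta_S$, $\theta_y$ and $\theta_{f_t}$, so that averaging commutes with them, and hence
\[
tf_t(\bar\xi_t)+wf_t(\bar\eta_t)=\int_G\rho_L(g)\circ(\tilde f-f)\circ\rho_S(g)^{-1}\,dg=\tilde f-f,
\]
the last equality because $\tilde f-f$ is itself equivariant. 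The $\mathscr{R}$- and $\K$-cases are identical, averaging the source vector field (respectively the source vector field and the fibrewise matrix) over the linear $G$-action in the same way.

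Finally I would integrate the time-dependent, $G$-equivariant, origin-preserving vector fields $\bar\xi_t$ and $\bar\eta_t$ to a smooth family $\phi_t\in\HH$ with $\phi_0=\id$ and $\phi_t\cdot f=f_t$; because the generating vector fields are invariant under conjugation by $\rho_S(g)$ and $\rho_L(g)$, uniqueness of solutions of the defining ODEs forces each $\phi_t$ to commute with $\rho_S(g)$ and $\rho_L(g)$, that is, with every $g\in G$. Then $\phi:=\phi_1$ satisfies $\tilde f=\phi\cdot f$ and $\phi\cdot g=g\cdot\phi$ for all $g\in G$, which is the assertion, and the last sentence of the statement follows formally as already indicated there. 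I expect the main obstacle to be bookkeeping rather than a genuine difficulty: checking carefully that the averaging operator commutes with $tf_t$ and $wf_t$ at the level of modules, that the solution of the homological equation can be arranged to depend smoothly on $t$ along the whole homotopy, and that the $\mathscr{R}$-, $\A$- and $\K$-versions are all covered by the same argument with the evident modifications.
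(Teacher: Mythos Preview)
Your proposal is correct and is precisely the argument the paper has in mind: the paper does not give a detailed proof but only says that it ``follows the standard proof of $k$-determinacy \cite{ma3}, using vector fields --- but averaging the vector fields over the group $G$ using the Haar integral,'' which is exactly the Thom--Levine homotopy argument with Haar averaging that you have written out.
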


Assume that $f$ is $k-\HH$-determined. We write $\HH^k$ for the families of invertible $k$-jets
\[
\phi^k = (\phi_1^k, \ldots, \phi_s^k) \colon \left( \bigsqcup_ s \R^n \times \R^p, \bigsqcup_s 0 \right) \to \left( \bigsqcup_s \R^n \times \R^p, \bigsqcup_s 0 \right), \quad \phi \in \HH,
\]
and write $\HH^k_f$ for the subgroup stabilizing $j^k f$; both these groups are real algebraic.

Real algebraic groups have finitely many components, so we may apply Iwasawa's theorem \cite[p.~180]{hoch} and choose a maximal compact subgroup $G$ of $\HH^k_f$. Following the arguments of J\"anich \cite[\S 2]{janich} and Bochner \cite{bochner} we can linearize $G$ in the following ways:

\begin{lem}  \label{stronglin}
\begin{itemize}
\item[i)] Suppose that $G < \HH^k$ is a compact subgroup. Then there exists $\phi \in \HH^k$ such that $\phi G \phi^{-1} < \bigsqcup_s GL_n \times GL_p$.
\item[ii)] Suppose that $G < \HH$ is a compact subgroup. Then there exists $\phi \in \HH$ such that $\phi G\phi^{-1} < \bigsqcup_s GL_n \times GL_p$. Suppose, furthermore, that $j^kG < \bigsqcup_s GL_n \times GL_p < \HH^k$. Then we may assume that $j^k \phi=(1, \ldots, 1)$. \qed
\end{itemize}
\end{lem}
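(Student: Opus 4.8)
The plan is to follow Bochner's linearization argument in the equivariant form used by J\"anich, averaging a ``straightening'' map over $G$ with respect to normalized Haar measure (which exists, since a compact subgroup of the real algebraic group $\HH^k$, resp.\ of $\HH$, is a compact Lie group). Write $\rho$ for the homomorphism obtained by taking linear parts, $\rho(g)=j^1g$; it is injective on $G$ by (the $\HH^k$-analogue of) Lemma~\ref{basicobservations}~iv). For (i), each $g\in G<\HH^k$ acts on $\bigsqcup_s\R^n\times\R^p$ as a polynomial map of degree $\le k$ fixing the origins, with linear term $\rho(g)$. I would set
\[
\phi=\int_G \rho(g)^{-1}\circ g\;dg,
\]
a polynomial map of degree $\le k$ whose coefficients depend continuously on those of $g$. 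Since the linear part of $\rho(g)^{-1}\circ g$ is the identity for every $g$, so is that of $\phi$; hence the $k$-jet of $\phi$ is invertible. The standard change of variable $h\mapsto hg$ in the Haar integral gives $\phi\circ g=\rho(g)\circ\phi$, i.e.\ $\phi\,g\,\phi^{-1}=\rho(g)\in\bigsqcup_s GL_n\times GL_p$ for all $g\in G$.

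It then remains to check that $\phi$ actually lies in $\HH^k$ and not merely in the group of all invertible $k$-jets. For $\HH=\K$, every $g$ has the form $g(x,y)=(g_0(x),\tilde g(x,y))$ with $\tilde g(x,0)=0$; hence $\rho(g)$ is block diagonal, $\rho(g)^{-1}\circ g$ again has this triangular shape with second component vanishing on $\{y=0\}$, and these properties pass to the integral, so $\phi\in\K^k$. For $\HH=\A$ each $g$ is a product of diffeomorphisms of the source factors and of the target, $\rho(g)$ is correspondingly a product of linear maps, and the integrand --- hence $\phi$ --- is again such a product, so $\phi\in\A^k$.

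For (ii) I would run exactly the same construction with germs in place of $k$-jets: the $G$-action on $(\bigsqcup_s\R^n\times\R^p,\bigsqcup_s 0)$ is smooth in $g$, so $\phi=\int_G\rho(g)^{-1}\circ g\,dg$ is a well-defined germ with identity $1$-jet (hence a germ of diffeomorphism), it respects the $\K$- resp.\ $\A$-structure by the computation above, and $\phi\,g\,\phi^{-1}=\rho(g)$. For the refinement, assume $j^kG<\bigsqcup_s GL_n\times GL_p$, i.e.\ $j^kg=\rho(g)$ for each $g\in G$; then $j^k\bigl(\rho(g)^{-1}\circ g\bigr)=\rho(g)^{-1}\rho(g)=\id$ for every $g$, so averaging yields $j^k\phi=(1,\dots,1)$. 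Note that $G$ itself need not be linear, so the conjugation is still doing genuine work --- it kills the terms of order $>k$ --- while leaving the $k$-jet untouched.

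The step I expect to need the most care is the structural bookkeeping that keeps the averaged straightening map inside $\HH$: for $\K$ one must track that the block-triangular shape (second coordinate vanishing on the zero section) survives both the normalization by $\rho(g)^{-1}$ and the integration, and for $\A$ the analogous product shape; once this is in place, injectivity of $\rho|_G$, invertibility of $\phi$, and the conjugation identity are all formal. For (ii) the only point beyond classical Bochner is the $k$-jet computation in the refinement, where the thing to keep in mind is precisely that the hypothesis ``$j^kG$ linear'' is much weaker than ``$G$ linear''.
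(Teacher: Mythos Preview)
Your proposal is correct and is precisely the Bochner--J\"anich averaging argument that the paper invokes (the paper states the lemma without proof, citing J\"anich \cite[\S 2]{janich} and Bochner \cite{bochner}). Your structural bookkeeping for $\K$ and $\A$ and the $k$-jet computation for the refinement in (ii) are the right details to fill in.
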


We return to the proof of theorem~\ref{existenceofmc}, and to the maximal compact subgroup $G$ of $\HH^k_f$.

Denote $G_0=\phi G\phi^{-1} < \bigsqcup_s GL_n \times GL_p$. By abuse of notation, we will also denote by $G_0$ the corresponding linear subgroup of $\HH^k$. Let $\tilde{\phi} \in \HH$ such that $j^k \tilde{\phi}=\phi$. Then if $f_0=\tilde{\phi} \cdot f$, its jet $j^kf_0$ is $G_0$-invariant, and $G_0$ is maximal compact in $\HH^k_{f_0}$.

Let $H$ be any compact subgroup of $\HH_{f_0}$. By classical Lie group theory $j^k H$ is conjugate in $\HH^k_{f_0}$ to a subgroup of $G_0$, say by a family of jets of diffeomorphisms $\psi^k=(\psi_1^k, \ldots, \psi^k_s)$. Let $\tilde{\psi}$ be a family of diffeomorphisms $(\tilde{\psi}_1, \ldots, \tilde{\psi}_s)$ with jet $\psi^k$. Then the $k$-jet of $\tilde{\psi}H\tilde{\psi}^{-1}$ is linear. 

By lemma~\ref{stronglin}, we can find $\psi=(\psi_1, \ldots, \psi_s)$ with the same $k$-jet as $\tilde{\psi}$, such that $\psi H \psi^{-1}$ is linear. Hence $\tilde{f}=\psi \cdot f_0$ is $(\psi H \psi^{-1})$-invariant, just like $f_0$ (because $\psi H \psi^{-1} < G_0$). Since $j^k \psi=j^k \tilde{\psi}=\psi^k \in \HH^k_{f_0}$, we have $j^k \tilde{f}=j^k f_0$. Recall that $f_0$ is $k-(\psi H \psi^{-1})$-determined, so by lemma~\ref{equivdeterminacy} there exists some $(\psi H \psi^{-1})$-equivariant family of diffeomorphisms $\alpha=(\alpha_1, \ldots, \alpha_s) \in \HH$ such that $f_0=\alpha \cdot \tilde{f}$. Hence $f_0=(\alpha \psi) \cdot f_0$, and so $\alpha \psi$ preserves $f_0$ \emph{and} conjugates $H$ to
\[
\alpha \underbrace{\psi H \psi^{-1}}_{< G_0} \alpha^{-1} = \psi H \psi^{-1} < G_0
\]
where the last equality holds because $\alpha$ is $G_0$-equivariant.

We have seen that $G_0$ can be viewed as a compact subgroup of $\HH_{f_0}$, and that any other compact subgroup $H$ of $\HH_{f_0}$ is conjugate to a subgroup of $G_0$, so conjugating back to $\HH_f$, we see that theorem~\ref{existenceofmc} holds.  \qed
\end{proof}

\subsection{Contractibility of quotients}

Following J\"anich \cite{janich}, we define what it means for $\A_f/MC(\A_f)$ to be contractible. We have not specified a topology on $\A_f/MC(\A_f)$, and in fact we shall define contractibility not in terms of the topology of $\A_f/MC(\A_f)$ as a space of its own, but through the topological properties of its action on the source and target of $f$.

Before defining contractibility, we define what it means for a map into a quotient $\A_f/G$ to be smooth. Let $G$ be a subgroup of $\A$. Given a smooth manifold $M$, possibly with boundary, we say that a map $q \colon M \to \A/G$ is \emph{smooth} if there exists an open covering $\{U_i\}$ of $M$ such that $q$ is represented by fibered (over $U_i$) maps $\phi_i \colon U_i \times \bigsqcup_s \R^n \to U_i \times \bigsqcup_s \R^n$ and $\psi_i \colon U_i \times \R^p \to U_i \times \R^p$, which are diffeomorphisms. 

Equivalently, a map $\alpha \colon M \to \A/G$ is smooth if there exists an open covering $\{U_i\}_{i \in I}$ of $M$ such that $\alpha$ admits a local lift $\tilde{\alpha}_i \colon U_i \to \A$, where the corresponding fibered map-germs $\phi_i$ and $\psi_i$ are smooth.

\begin{defn}
Let $G$ be a subgroup of $\A_f$. The quotient $\A_f/G$ is \emph{contractible} if for every smooth manifold $M$ with boundary, any smooth map $\partial M \to \A_f/G$ can be extended to a smooth map $M \to \A_f/G$. 
\end{defn}

We proceed to state and prove the main result of the section:

\begin{thm} \label{contractibilitythm}
Suppose given a finitely $\A$-determined multigerm
\[
f = \bigsqcup_{i=1}^s f_i \colon \bigsqcup_{i=1}^s (\R^n,0) \to (\R^p,0).
\]
Then the quotient $\A_f/MC(\A_f)$ is contractible. 
\end{thm}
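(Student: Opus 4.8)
The plan is to follow the strategy of J\"anich's monogerm argument, adapted to the multigerm setting, and reduce contractibility to a parametrized version of the equivariant determinacy and linearization machinery already established. Fix a representative $G = MC(\A_f)$, which we may assume (after conjugating $f$ by an element of $\A$, using lemma~\ref{basicobservations} iii) and lemma~\ref{stronglin}) is a compact \emph{linear} subgroup of $\bigsqcup_s GL_n \sqcup GL_p$, and that $f$ is a polynomial of degree $\le k$ where $f$ is $k$-$\A$-determined. Given a smooth manifold $M$ with boundary and a smooth map $q \colon \partial M \to \A_f/G$, I first want to extend $q$ to a smooth map $\bar q \colon M \to \A_f/G$; by the local-lift description of smoothness it suffices to produce, over a suitable open cover $\{U_i\}$ of $M$ refining a cover on which $q$ lifts, fibered families of diffeomorphisms representing $\bar q$, agreeing on overlaps up to the $G$-action. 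The standard device is to first extend $q$ to a smooth map $\tilde q \colon M \to \A/G$ (forgetting the stabilizer condition), which is easy because $\A/G$-valued smooth maps are just pairs of fibered diffeomorphism families and these extend by a partition-of-unity/collar argument, and then to \emph{correct} $\tilde q$ back into $\A_f/G$.

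The correction step is where the real work lies, and it is the parametrized analogue of the argument in the proof of theorem~\ref{existenceofmc}. Writing the extended family as $(\phi_x)_{x \in M}$ with $\phi_x \cdot f =: f_x$ a family of germs equal to $f$ for $x \in \partial M$, I want to find, smoothly in $x$ and over the cover, a family $(\alpha_x)_{x\in M}$ in $\A$ with $\alpha_x \cdot f_x = f$ and $\alpha_x = \id$ for $x \in \partial M$; then $x \mapsto \alpha_x \phi_x$ is the desired lift of $\bar q$ into $\A_f$. Since $f_x$ has the same $k$-jet as $f$ is false in general, one first homotopes $\tilde q$ (rel $\partial M$) so that the $k$-jet $j^k f_x$ becomes constant: this uses that $\A^k_f$ has finitely many components and that the orbit map $\A^k \to J^k$ is a submersion onto the orbit, so the $k$-jets $j^k f_x$, being null-homotopic as a map $M \to \A^k \cdot j^k f$ (it is constant on $\partial M$ and $M$ deformation retracts... no — here one genuinely uses that the fiber of $\A^k \to \A^k\cdot j^kf$ is $\A^k_f$ and a standard obstruction/lifting argument over $M$), can be trivialized after composing with a smooth family in $\A^k$, and this family lifts to $\A$. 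Once $j^k f_x \equiv j^k f$, the equivariant $k$-determinacy lemma~\ref{equivdeterminacy} — or rather its parametrized, non-equivariant version, proved by the same averaging-free integration of vector fields along the family — yields the family $\alpha_x$, smoothly in $x$, equal to the identity where $f_x = f$.

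The main obstacle I expect is precisely the passage from the pointwise determinacy statement of lemma~\ref{equivdeterminacy} to a \emph{parametrized, smooth-in-$x$, rel-boundary} version valid over the cover $\{U_i\}$, and making sure the resulting $\alpha_x$ on overlaps $U_i \cap U_j$ differ only by the $G$-action so that they glue to a well-defined smooth map into $\A_f/G$ rather than merely into $\A_f$ locally. This is handled by running the Moser-type homotopy argument for $k$-determinacy with an extra parameter $x \in U_i$: one integrates the time-dependent vector field along the family $f_x + t(f - f_x)$, which exists and depends smoothly on $(x,t)$ because the infinitesimal $k$-determinacy condition $tf(\theta_S) + wf(\theta_y) + m^{k+1}\theta_f = \theta_f$ is open and holds along the whole family near $f$; the vector field vanishes for $x \in \partial M$, giving the rel-boundary condition, and on overlaps the ambiguity is absorbed into $G$ because two local solutions differ by an element of $\A_f$ fixing all $k$-jets, which by $k$-determinacy and the maximality of $G$ can be taken in $G$ after a further (parametrized) linearization via lemma~\ref{stronglin}. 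The remaining verifications — smoothness of all constructed families, compatibility of covers, the collar extension at the start — are routine.
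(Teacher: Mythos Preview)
Your high-level architecture --- pass to $k$-jets, use finite-dimensional Lie-group contractibility there, then correct back via a parametrized $k$-determinacy (Moser/flow) argument --- is the same as the paper's, and your ``correction step'' is exactly the content of the paper's key Proposition~\ref{extensionprop}. But the proposal has a genuine gap in the gluing, and the paper circumvents it by a device you do not use.

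The problem is your overlap claim. You assert that on $U_i\cap U_j$ two local correcting families $\alpha_x$ differ by an element of $\A_f$ with identity $k$-jet, and that ``by $k$-determinacy and the maximality of $G$ [this] can be taken in $G$ after a further (parametrized) linearization''. That is false: $G$ is linear, so the only element of $G$ with identity $k$-jet is the identity, whereas the subgroup of $\A_f$ with identity $k$-jet is enormous (indeed it is exactly the kind of element your Moser argument produces). No linearization via Lemma~\ref{stronglin} changes this, because conjugating by an element of $\A_f$ does not move such elements into $G$. So your local lifts simply do not patch to a well-defined map into $\A_f/G$.

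The paper avoids gluing altogether by producing a \emph{global} lift $\tilde\alpha\colon\partial M\to\A_f$ at the outset (Lemma~\ref{lift}). The point your sketch circles around but does not nail down is that, since $j^l G$ is maximal compact in the finite-dimensional Lie group $\A^l_f$, the quotient $\A^l_f/G$ is honestly contractible and $\pi^l\colon\A^l_f\to\A^l_f/G$ admits a global section $\sigma^l$; the associated map $\tau\colon\A^l_f\to G$, $\tau(a)=\sigma^l(\pi^l a)\cdot a^{-1}$, then yields a global section $\sigma\colon\A_f/G\to\A_f$ by $\sigma(\pi(g))=\tau(j^l g)\cdot g$. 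With $\tilde\alpha=\sigma\circ\alpha$ in hand, one extends $\bar j^l\circ\alpha$ over $M$ (again by contractibility of $\A^l_f/G$), lifts to $\gamma\colon M\to\A^l_f$ via $\sigma^l$, adjusts $\tilde\alpha$ by a map into $G$ so that $j^l\tilde\alpha=\gamma|\partial M$ (Lemma~\ref{normalization}), realizes $\gamma$ by a single $\alpha'\colon M\to\A_{j^l f}$ extending $\tilde\alpha$, and finally applies Proposition~\ref{extensionprop} once, globally, to push $\alpha'$ into $\A_f$. This also cleanly replaces your hesitant ``homotope so that $j^k f_x$ is constant'' step: it \emph{is} the extension of $\bar\beta$ over $M$ using contractibility of $\A^l_f/G$, not an ad hoc obstruction argument.
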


This proof follows that of Rim{\'a}nyi \cite{rimanyithesis} for the monogerm case.

\begin{proof}
The following proposition is crucial to the proof:

\begin{prop} \label{extensionprop}
There exists an $l \in \N$ such that the following holds:

If $M$ is an $r$-dimensional manifold with boundary (possibly empty) and
\[
g,h \colon (M \times \bigsqcup_s \R^n, M \times \bigsqcup_s \{0\}) \to (M \times \R^p, M \times \{0\})
\]
are fibered (over $M$) germs at $M \times \bigsqcup_s \{0\}$ satisfying the following properties:
\[
g|\partial M \times \bigsqcup_s (\R^n,0) = h|\partial M \times \bigsqcup_s (\R^n,0), \textrm{ and }
\]
\[
j^l(g|u \times \R^n)=j^l(h|u \times \R^n)=j^l f \quad \ \forall u \in M,
\]
then there exist $\psi^k \in \textrm{\emph{Diff}}(M \times \R^n)$, $k=1, \ldots, s$, and $\phi \in \textrm{\emph{Diff}}(M \times \R^p)$ such that $g=\phi \circ h \circ \bigsqcup_s (\psi^k)^{-1}$ and
\[
\psi^k|\partial M \times \R^n=\emph{\id}, \qquad \phi|\partial M \times \R^p = \emph{\id},
\]
\[
j^1(\psi^k|u \times \R^n)=\emph{\id}, \qquad j^1(\phi|u \times \R^p)=\emph{\id},
\]
for all $u \in M$, $k=1, \ldots, s$.
\end{prop}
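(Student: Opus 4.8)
The plan is to prove Proposition~\ref{extensionprop} by the Thom--Levine homotopy method, carried out with parameters in $M$ and relative to $\partial M$. This is the parametrized, boundary-relative version of the standard proof of finite determinacy; for monogerms it is the argument of Rim\'anyi~\cite{rimanyithesis}, and most of it goes through verbatim, the multigerm structure entering only through a target vector field that is shared by all source sheets. Fix a determinacy degree for $f$ and take $l$ at least that large (we enlarge it once more below). Given $g,h$ as in the statement, join them by the affine path of fibered germs $G_t=(1-t)\,h+t\,g$, $t\in[0,1]$, over $M$. Since $g=h$ on $\partial M\times\bigsqcup_s(\R^n,0)$ and $j^l(g|u\times\R^n)=j^l(h|u\times\R^n)=j^lf$ for every $u\in M$, we get $G_0=h$, $G_1=g$, $G_t=h$ over $\partial M$ for all $t$, and $j^l(G_t|u\times\R^n)=j^lf$ for all $t,u$; in particular the velocity $\dot G_t=g-h\in\theta_{G_t}$ has vanishing fiberwise $l$-jet over every $u$ and vanishes identically over $\partial M$.

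Next I look for fibered-over-$M$, time-dependent vector fields $\xi_t=(\xi_t^1,\dots,\xi_t^s)$ along the zero section of $M\times\bigsqcup_s\R^n$ and $\eta_t$ along the zero section of $M\times\R^p$, whose fiberwise $1$-jets along the zero sections vanish (equivalently $\xi_t|u\times\R^n\in m(n)^2\theta_S$ and $\eta_t|u\times\R^p\in m(p)^2\theta_y$ for each $u$) and which vanish identically over $\partial M$, solving the Thom--Levine equation $\dot G_t+TG_t\circ\xi_t=\eta_t\circ G_t$ for $t\in[0,1]$, smoothly in $(u,t)$. Granting this, shrink the germ representatives so the associated non-autonomous flows exist up to time $1$ (harmless, since $\xi_t,\eta_t$ vanish on the zero sections) and integrate, producing fibered diffeomorphism germs $\Psi_t=(\Psi_t^1,\dots,\Psi_t^s)$ of $M\times\bigsqcup_s\R^n$ and $\Phi_t$ of $M\times\R^p$ with $\Psi_0=\id$, $\Phi_0=\id$. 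Differentiating the identity $G_t=\Phi_t\circ G_0\circ\bigsqcup_s(\Psi_t^k)^{-1}$ in $t$ recovers exactly the Thom--Levine equation, so this identity holds throughout $[0,1]$; setting $\phi=\Phi_1$ and $\psi^k=\Psi_1^k$ gives $g=\phi\circ h\circ\bigsqcup_s(\psi^k)^{-1}$. Then $\psi^k|\partial M\times\R^n=\id$ and $\phi|\partial M\times\R^p=\id$ because $\xi_t,\eta_t$ vanish over $\partial M$, and $j^1(\psi^k|u\times\R^n)=\id$, $j^1(\phi|u\times\R^p)=\id$ because $\xi_t,\eta_t$ have vanishing fiberwise $1$-jets.

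The remaining point --- solving the Thom--Levine equation with these constraints --- is the heart of the matter. Mather's infinitesimal criterion for finite $\A$-determinacy~\cite{ma3}, applied to the multigerm $f$ and with $l$ enlarged if necessary, gives an inclusion of finitely generated modules $m(n)^{l+1}\theta_f\subseteq tf(m(n)^2\theta_S)+wf(m(p)^2\theta_y)$; the only novelty relative to $s$ disjoint monogerms is that the field in $wf$ is common to all $s$ source sheets, which is exactly the shape of the multigerm criterion, and passing to the squared maximal ideals --- which is what makes the eventual diffeomorphisms fiberwise tangent to the identity --- is why we are allowed to enlarge $l$. Since $j^lG_t=j^lf$ is constant in $(u,t)$, the relevant cokernel is unchanged, so the analogous inclusion holds with $f$ replaced by $G_t$, uniformly in $(u,t)$; the standard device for making such an inclusion effective over a parameter space --- a Malgrange-type preparation with the ring $C^\infty(M\times[0,1])$ adjoined, just as in the parametrized proof of determinacy --- then produces $\xi_t,\eta_t$ depending smoothly on $(u,t)$ near each point of $M$. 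For the condition over $\partial M$, near a boundary point choose a chart in which $M$ is $\R^{r-1}\times[0,\infty)$ with last coordinate $u_r$ a defining function of $\partial M$; by Hadamard's lemma $\dot G_t=u_r\,\dot G_t'$ with $\dot G_t'$ still of vanishing fiberwise $l$-jet, solve the Thom--Levine equation for $\dot G_t'$ as above, and multiply back, so that $\xi_t=u_r\xi_t'$ and $\eta_t=u_r\eta_t'$ vanish over $\partial M$. Finally patch by a partition of unity $\{\rho_\alpha\}$ on $M$ subordinate to a cover by interior and boundary charts: since the operations $\xi\mapsto TG_t\circ\xi$ and $\eta\mapsto\eta\circ G_t$ are $C^\infty(M)$-linear, $\xi_t=\sum_\alpha\rho_\alpha\xi_t^{(\alpha)}$ and $\eta_t=\sum_\alpha\rho_\alpha\eta_t^{(\alpha)}$ still solve the global equation, still have vanishing fiberwise $1$-jets, and still vanish over $\partial M$ (the interior terms being supported away from it).

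The main obstacle is precisely this last step: turning the purely algebraic determinacy inclusion for the single germ $f$ into a solution of the Thom--Levine equation that is simultaneously smooth in the parameter $u\in M$ and the homotopy variable $t$, with vanishing fiberwise $1$-jet (so the diffeomorphisms are tangent to the identity), and divisible by a boundary defining function (so they restrict to the identity over $\partial M$). Each ingredient --- parametrized Malgrange preparation, the enlargement of $l$ to gain the squared ideals, Hadamard's lemma, and the partition-of-unity patching --- is classical; the work is in combining them coherently, and the multigerm case causes no additional difficulty beyond the shared target field.
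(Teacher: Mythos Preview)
Your proposal is correct and follows essentially the same approach as the paper: both set up the affine homotopy between $g$ and $h$, reduce to finding source and target vector fields satisfying the Thom--Levine relation together with the vanishing-$1$-jet and boundary conditions (the paper's (\ref{cond1})--(\ref{cond4})), and integrate their flows to obtain $\phi$ and the $\psi^k$. In fact you spell out more of the vector-field construction (Mather's inclusion, parametrized preparation, Hadamard factorization at $\partial M$, partition-of-unity patching) than the paper, which at that point defers to Rim\'anyi and to~\cite{feragenthesis}.
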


\begin{rem}
Paraphrased, proposition~\ref{extensionprop} says: Given a pair of ''smooth'' maps $g, h \colon M \to C^\infty\left((\R^n,0), (\R^p,0)\right)$ such that $g|\partial M = h|\partial M$ and $j^l(g(u)) = j^l(h(u)) = j^l f$ for all $u \in M$, there exists a smooth map $\varphi \colon M \to \A$ such that $\varphi | \partial M \equiv \id$, $j^1 \varphi \equiv \id$ and $g = \varphi \cdot h$. 
\end{rem}

\begin{proof}
We find the $\psi^k$ and $\phi$ by using the flows of suitably chosen vector fields in the source components and in target.

Let $F \colon \left( M \times \bigsqcup_s \R^n \times \R, M \times \bigsqcup_s \{0\} \times \R \right) \to \left( M \times \R^p \times \R, M \times \{0\} \times \R \right)$ be the map germ defined by $(u,x,t) \mapsto \left((1-t)g(u,x)+th(u,x), t\right)$. From now on, we denote by $u=(u_i), \quad x^k=(x^k_i), \quad y=(y_i), \quad t$, the coordinates of $M$, the $k^\textrm{\textrm{th}}$ source component $\R^n$, $\R^p$, and $\R$, respectively. We write $F=\bigsqcup_{k=1}^s F^k$, and the notation $F_y$ will denote the composition $\pr_{\R^p} \circ F$, and so on.

\subsubsection*{Constructing the diffeomorphisms}

We want to construct the diffeomorphisms $\psi^k$ and $\phi$ through flows $\Psi$ and $\Phi$ in the source and target of $F$ such that
\[
\begin{array}{l}
\Psi| \colon (M \times \bigsqcup_s\R^n \times \R) \times [0,1] \to M \times \bigsqcup_s \R^n \times \R, \quad \Psi=\bigsqcup_{k=1}^s \Psi^k,\\
\Phi| \colon (M \times \R^p \times \R) \times [0,1] \to M \times \R^p \times \R,
\end{array}
\]
with
\[
\begin{array}{l}
\Psi^k \left( (u,x^k,0),s \right) \in M \times \R^n \times \{s\},\\
\Phi \left( (u,y,0),s \right) \in M \times \R^p \times \{s\},
\end{array}
\]for all $s \in [0,1]$, and $F(\Psi((u,x,0),s))=\Phi((g(u,x),0),s)=\Phi(F(u,x,0),s)$, which holds if
\begin{equation} \label{cond-1}
F \circ \Psi = \Phi \circ (F \times \pr_{[0,1]}).
\end{equation}

Suppose that we have found such flows $\Psi$ and $\Phi$, and define maps 
\[
\begin{array}{l}
\tilde{\Psi} \colon M \times \bigsqcup_s \R^n \times [0,1] \to M \times \bigsqcup_s \R^n,\\
\tilde{\Phi} \colon M \times \R^p \times[0,1] \to M \times \R^p,
\end{array}
\]
by setting
\[
\begin{array}{l}
\tilde{\Psi}(u,x,s)=\pr_{M \times \bigsqcup_s \R^n} \circ \Psi((u,x,0),s),\\
\tilde{\Phi}(u,y,s)=\pr_{M \times \R^p} \circ \Phi((u,y,0),s),
\end{array}
\]
and define $\tilde{h} \colon M \times \bigsqcup_s \R^n \times [0,1] \to M \times \R^p$ by setting 
\[
\tilde{h}(u,x,s)=\left( \tilde{\Phi}_s^{-1} \circ h \circ \tilde{\Psi} \right)(u,x,s), 
\]
where $\tilde{\Phi}_s(u,y)=\tilde{\Phi}(u,y,s)$. Note that $\tilde{\Phi}_0(u,y)=(u,y)$, and that $\tilde{\Psi}(u,x,0)=(u,x)$.

\begin{lem}
Then $\tilde{h}_0=h$ and $\tilde{h}_1=g$.
\end{lem}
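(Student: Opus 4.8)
The plan is to unwind the definitions of $\tilde\Psi$, $\tilde\Phi$ and $\tilde h$ and read off the two boundary values directly, using only the intertwining identity (\ref{cond-1}) together with the two normalizations noted just above the lemma, $\tilde\Psi(u,x,0)=(u,x)$ and $\tilde\Phi_0=\id$.

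First I would record how (\ref{cond-1}) reads after restricting to the slice $t=0$. Because the flows keep the $[0,1]$-parameter equal to the value of the final $\R$-coordinate — this is exactly the displayed property $\Psi^k((u,x^k,0),s)\in M\times\R^n\times\{s\}$, and likewise for $\Phi$ — we have $\Psi((u,x,0),s)=(\tilde\Psi(u,x,s),s)$ and $\Phi((v,w,0),s)=(\tilde\Phi_s(v,w),s)$. Combining this with $F(u,x,0)=(g(u,x),0)$ and $F_y(v,w,s)=(1-s)g(v,w)+s\,h(v,w)$, the $\R^p$-component of (\ref{cond-1}) becomes
\[
(1-s)\,g\bigl(\tilde\Psi(u,x,s)\bigr)+s\,h\bigl(\tilde\Psi(u,x,s)\bigr)=\tilde\Phi_s\bigl(g(u,x)\bigr)
\]
for all $(u,x)$ and all $s\in[0,1]$. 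This is the only computation in the proof, and it is pure bookkeeping; the one point to be careful about is precisely the level-preserving property of the flows just invoked, which is what makes $\tilde\Psi$, $\tilde\Phi$ well defined and guarantees that $F$ intertwines them.

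Then the two claims fall out by specialization. At $s=0$ the normalizations $\tilde\Phi_0=\id$ and $\tilde\Psi(u,x,0)=(u,x)$ give $\tilde h_0(u,x)=\tilde\Phi_0^{-1}\bigl(h(\tilde\Psi(u,x,0))\bigr)=h(u,x)$, so $\tilde h_0=h$ without even using the displayed identity. At $s=1$ the coefficient of $g$ in the displayed identity vanishes, leaving $h(\tilde\Psi(u,x,1))=\tilde\Phi_1(g(u,x))$; applying $\tilde\Phi_1^{-1}$, which is a diffeomorphism since $\tilde\Phi_1$ is the time-$1$ map of the flow $\tilde\Phi$, yields $\tilde h_1(u,x)=\tilde\Phi_1^{-1}\bigl(h(\tilde\Psi(u,x,1))\bigr)=g(u,x)$, i.e. $\tilde h_1=g$. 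I do not expect a genuine obstacle here: all the real work went into arranging the flows $\Psi$, $\Phi$ so as to satisfy (\ref{cond-1}) with the level-preserving property, and this lemma merely harvests that consequence.
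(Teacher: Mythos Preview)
Your proof is correct and follows essentially the same route as the paper: both unwind the definitions of $\tilde\Psi$, $\tilde\Phi$, $\tilde h$ and read the identity off from (\ref{cond-1}) together with the level-preserving property of the flows. The only cosmetic difference is that you first record the intertwining identity $(1-s)\,g(\tilde\Psi(u,x,s))+s\,h(\tilde\Psi(u,x,s))=\tilde\Phi_s(g(u,x))$ for general $s$ and then specialize to $s=1$, whereas the paper works directly at $s=1$; this is the same computation arranged slightly differently.
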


\begin{proof}
It is straightforward to prove that $\tilde{h}_0=h$, and for the second identity we note that $\tilde{h}_1(u,x)=\tilde{h}(u,x,1)=\tilde{\Phi}_1^{-1}(h(\tilde{\Psi}(u,x,1)))$, so $\tilde{h}_1=g$ if and only if $(\tilde{\Phi}_1 \circ g)(u,x)=(h \circ \tilde{\Psi})(u,x,1)$ for all $u,x$, which holds if and only if
\begin{equation} \label{floweq}
\tilde{\Phi}(g(u,x),1)=h(\tilde{\Psi}(u,x,1)) \textrm{ for all } u,x,
\end{equation}
where $h(\tilde{\Psi}(u,x,1))=h(\pr_{M \times \bigsqcup_s \R^n}(\Psi((u,x,0),1)))$. But we have  
\[
F\left(\Psi\left((u,x,0),1\right)\right)=\Phi\left(F(u,x,0),1\right)
\]
by (\ref{cond-1}), and $\Phi\left(F(u,x,0),1\right)=\Phi(g(u,x),0,1)$ by the definition of $F$, while $F\left(\Psi\left((u,x,0),1\right)\right)=\left(h\left(\pr_{M \times \bigsqcup_s \R^n}(\Psi((u,x,0),1))\right), 1\right)$, also by the definition of $F$, so $h(\tilde{\Psi}(u,x,1)) = \pr_{M \times \R^p}(F(\Psi((u,x,0),1))) = \pr_{M \times \R^p}(\Phi(g(u,x),0,1)) = \tilde{\Phi}(g(u,x),1),$ and (\ref{floweq}) holds. \qed
\end{proof}

In particular, $\tilde{\Phi}_1^{-1} \circ h \circ \tilde{\Psi}_1=\tilde{h}_1=g$, and thus $\tilde{\Psi}_1$ and $\tilde{\Phi}_1$ are the conjugating diffeomorphisms sought $\bigsqcup \psi^k$ and $\phi$.

Suppose that we are given map germs
\[
\begin{array}{l}
X^k \colon (M \times \R^n \times \R, M \times 0 \times \R) \to (\R^n,0) \ (k=1, \ldots, s),\\
Y \colon (M \times \R^p \times \R, M \times 0 \times \R) \to (\R^p,0),
\end{array}
\]
such that the following conditions (\ref{cond1}) -- (\ref{cond4}) hold:
\begin{equation} \label{cond1}
\sum_{i=1}^n \frac{\partial F_{y_j}}{\partial x_i}(u,x^k,t)X^k_i(u, x^k, t) + \frac{\partial F_{y_j}}{\partial t}(u,x^k,t) = Y_j(F(u, x^k, t))
\end{equation}
for all $j=1, \ldots, p$, and $k=1, \ldots, s$;
\begin{equation} \label{cond2}
X^k|M \times 0 \times \R=0 \ (k=1, \ldots, s); \qquad Y|M \times 0 \times \R=0;
\end{equation}
\begin{equation} \label{cond3}
\left\{\begin{array}{l}
\frac{\partial X^k}{\partial x_i}(u,0,t)=0 \textrm{ for all } i=1, \ldots, n, \quad (k=1, \ldots, s); \\ 
 \\
\frac{\partial Y}{\partial y_j}(u,0,t)=0 \textrm{ for all } j=1, \ldots, p;
\end{array} \right.
\end{equation}
\begin{equation} \label{cond4}
X^k|\partial M \times \R^n \times \R = 0; \qquad Y|\partial M \times \R^p \times \R = 0.
\end{equation}

Consider the flows of the following vector fields:
\[
\begin{array}{ll}
\tilde{X}^k \colon M \times \R^n \times \R \to TM \times T\R^n \times T\R, & (u, x^k, t) \mapsto (0, X(u, x^k, t), 1),\\
\tilde{Y} \colon M \times \R^p \times \R \mapsto TM \times T\R^p \times T\R, & (u,y,t) \mapsto (0, Y(u,y,t), 1).
\end{array}
\]

By (\ref{cond2}), these flows exist at least in a neighborhood of $M \times 0 \times \R$. Furthermore, we see that the condition (\ref{cond1}) is just the condition of being a derivative of $F$-related flows, that is, a pair of flows satisfying (\ref{cond-1}). 

The maps $\tilde{\Psi}_1=\bigsqcup_{k=1}^s \tilde{\Psi}^k_1$ and $\tilde{\Phi}_1$ associated with the flows of $\tilde{X}^k$ and $\tilde{Y}$ as described above clearly satisfy
\[
\begin{array}{ll}
\tilde{\Psi}_1^k| \partial M \times \R^n = \id & \tilde{\Phi}_1|\partial M \times \R^p = \id,\\
j^1(\tilde{\Psi}_1^k|u \times \R^n)=\id & j^1(\tilde{\Phi}|u \times \R^p)=\id,
\end{array}
\]
by (\ref{cond3}) and (\ref{cond4}). Hence, the proof of proposition~\ref{extensionprop} is completed by finding the vector fields $X^k$ and $Y$. This construction is quite long and technical, but follows the construction by Rimanyi for the monogerm case \cite{rimanyi}. We refer to the thesis~\cite{feragenthesis} for details. \qed
\end{proof}

We may now return to the proof of theorem~\ref{contractibilitythm}.

Denote by $\A^l$ the Lie group of $l$-jets of elements of $\A$ and set 
\[
\A_f^l=\{(z_1, \ldots, z_s, z_t) \in \A^l|z_t \circ j^lf \circ (z_1 \sqcup \ldots \sqcup z_s)^{-1}=j^l f \}.
\]

For a sufficiently large $l \in \N$, the image of a maximal compact subgroup of $\A_f$ under $j^l$ is a maximal compact subgroup of $\A_f^l$. This is a consequence of the apparently more general:

\begin{lem}
For $f$ finitely $\A$-determined and $l$ sufficiently large, any maximal compact subgroup $G$ of $\A^l_f$ is the image under $j^l$ of a maximal compact subgroup $\tilde{G}$ of $\A_f$. 
\end{lem}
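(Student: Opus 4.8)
The plan is to prove the single assertion that the image $j^l\tilde G$ of a maximal compact subgroup $\tilde G$ of $\A_f$ (which exists by Theorem~\ref{existenceofmc}) is a maximal compact subgroup of $\A_f^l$; the lemma then follows formally. Indeed, $\A_f^l$ is real algebraic, so has finitely many components, so by Iwasawa's theorem its maximal compact subgroups exist and are mutually conjugate in $\A_f^l$; since $\tilde G$ is likewise unique up to conjugacy in $\A_f$ (Theorem~\ref{existenceofmc}) and $j^l$ is injective on compact subgroups (Lemma~\ref{basicobservations}(iv)), any maximal compact subgroup $G$ of $\A_f^l$ is conjugate in $\A_f^l$ to $j^l\tilde G$, and hence --- maximal compact subgroups being individuated only up to conjugacy --- is the image of a maximal compact subgroup of $\A_f$.

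To prove that $j^l\tilde G$ is maximal compact in $\A_f^l$, I would show that every compact subgroup $H<\A_f^l$ is conjugate in $\A_f^l$ to a subgroup of $j^l\tilde G$. By Lemma~\ref{stronglin}(i) there is $\chi\in\A^l$ with $\chi H\chi^{-1}$ linear; lifting $\chi$ to a diffeomorphism germ $\tilde\chi\in\A$ and replacing $f$ by $\tilde\chi\cdot f$ replaces $\A_f,\A_f^l,\tilde G$ by $\tilde\chi\A_f\tilde\chi^{-1},\chi\A_f^l\chi^{-1},\tilde\chi\tilde G\tilde\chi^{-1}$ (the last still a maximal compact subgroup of the new stabilizer), compatibly with $j^l$; so one may assume $H$ linear and $j^lf$ $H$-invariant. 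Let $k$ be the $\A$-determinacy degree of $f$, take $l\ge k$, and let $p$ be the polynomial of degree $\le l$ representing $j^lf$. Then $\A_p^l=\A_f^l$ (the same jet is stabilized), and, by the argument of Lemma~\ref{basicobservations}(iii), the linear group $H$ preserves the degree-$\le l$ polynomial $p$ itself, so $H<\A_p$ is an already-linear compact subgroup. Since $f$ is $l$-determined (being $k$-determined with $l\ge k$) and $j^lf=j^lp$, we may write $f=\gamma\cdot p$ with $\gamma\in\A$, and then $j^l\gamma\in\A_f^l$ because $(j^l\gamma)\cdot j^lp=j^l(\gamma\cdot p)=j^lf=j^lp$. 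By Theorem~\ref{existenceofmc} applied to the finitely determined germ $p$, the group $\gamma^{-1}\tilde G\gamma$ is a maximal compact subgroup of $\A_p=\gamma^{-1}\A_f\gamma$, so some $\delta\in\A_p$ satisfies $\delta H\delta^{-1}<\gamma^{-1}\tilde G\gamma$. Applying $j^l$ (with $j^lH=H$ since $H$ is linear, and $j^l\delta\in j^l\A_p\subset\A_p^l=\A_f^l$) and conjugating by $j^l\gamma\in\A_f^l$, the element $w=(j^l\gamma)(j^l\delta)\in\A_f^l$ satisfies $wHw^{-1}<j^l\tilde G$; undoing the conjugation by $\chi$ gives the required conjugation of the original $H$ into $j^l\tilde G$ inside $\A_f^l$, and hence $j^l\tilde G$ is maximal compact.

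The crux --- and the place where care rather than cleverness is needed --- is the device of replacing $f$ by the degree-$\le l$ polynomial $p$ with the same $l$-jet: this makes $\A_p^l$ literally equal to $\A_f^l$ and, via $f=\gamma\cdot p$ with $j^l\gamma\in\A_f^l$, supplies a way to transport the conclusion of Theorem~\ref{existenceofmc} for $\A_p$ back into $\A_f^l$ without ever leaving that group, which is exactly what makes the maximality argument close. Making the linearisation step and this transport mesh cleanly, and keeping straight the several conjugations in the second paragraph, is the main obstacle; the reduction in the first paragraph is purely formal, given Theorem~\ref{existenceofmc} and the finite-component (hence Iwasawa) structure of the real algebraic group $\A_f^l$. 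If one insists on the literal statement $G=j^l(\text{maximal compact of }\A_f)$ rather than its conjugate, one additionally needs $j^l\colon\A_f\to\A_f^l$ to hit the identity component of $\A_f^l$ for $l$ large; this is a strong/parametrised form of finite determinacy, and --- since the analogue of Proposition~\ref{extensionprop} at a point only controls the $1$-jet of the trivialising equivalence --- obtaining trivial $l$-jet control requires rerunning that vector-field construction while tracking higher powers of the maximal ideal, which is where the hypothesis that $l$ be sufficiently large enters.
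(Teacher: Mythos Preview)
Your technical core—linearise, pass to the degree-$\le l$ polynomial representative $p$, exploit $\A^l_p=\A^l_f$, and use finite determinacy to relate $\A_p$ and $\A_f$—is exactly the paper's strategy. The paper, however, runs the argument in the other direction: given $G$, it linearises $G$ to a linear group $G_\phi<\A_{p(\phi\cdot f)}$, conjugates by the determinacy equivalence $\psi$ (with $\phi\cdot f=\psi\cdot p(\phi\cdot f)$, so that $j^l\psi\in\A^l_{\phi\cdot f}$ automatically) to land in $\A_{\phi\cdot f}$, and then conjugates by $\phi^{-1}$ to produce a compact $\tilde G<\A_f$ with $j^l\tilde G=G$ computed directly from the telescoping of jets. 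Maximality of $\tilde G$ then follows a posteriori from maximality of $G$ and injectivity of $j^l$ on compact subgroups. This sidesteps your paragraph-1 deduction entirely.

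That deduction is where your argument has a genuine gap, which you rightly flag in the third paragraph: from ``$G$ is conjugate in $\A^l_f$ to $j^l\tilde G$'' one cannot conclude ``$G=j^l(\tilde G')$ for some maximal compact $\tilde G'<\A_f$'' unless the conjugating element lifts through $j^l$ to $\A_f$. Your paragraph-2 construction nearly supplies this—your conjugator is $w=j^l(\gamma\delta)$, already a $j^l$-image—but $\gamma\delta$ carries $p$ to $f$ rather than fixing $f$, so $(\gamma\delta)^{-1}\tilde G(\gamma\delta)$ is a maximal compact of $\A_p$, not of $\A_f$. The paper's move is to use $\gamma$ (your determinacy equivalence, its $\psi$) \emph{itself} to transport the group from $\A_p$ into $\A_f$; since $j^l\gamma\in\A^l_f$, this changes the $j^l$-image only by an inner automorphism of $\A^l_f$, and the whole computation collapses to the identity on the jet level. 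This is both simpler and more direct than your proposed route through surjectivity of $j^l$ onto the identity component of $\A^l_f$; you do not need to rerun the vector-field argument of Proposition~\ref{extensionprop} with higher-order control.
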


\begin{proof}
Let $G < \A^l_f$ be a maximal compact subgroup. Up to conjugation by some element $\phi^l \in \A^l$, $G$ is linear, i.e.,~$\phi^l G (\phi^l)^{-1} \equiv G_\phi$ is a linear maximal subgroup of $(\A^l)_{\phi^l \cdot f^l} = \A^l_{\phi \cdot f}$. But then $G_\phi$ (as a matrix group) is a subgroup of $\A_{p(\phi \cdot f)}$, where $p(\phi \cdot f)$ is the $l^{\textrm{th}}$ degree polynomial map representing $\phi \cdot f$. For $l$ sufficiently large, $p(\phi \cdot f)$ is $\A$-equivalent to $\phi \cdot f$ via some element $\psi \in \A$: $\phi \cdot f = \psi \cdot p(\phi \cdot f)$.

We must have $\psi^l :=j^l \psi \in \A^l_{\phi \cdot f}$, since $j^l(\phi \cdot f) = j^l \psi \cdot j^l(p(\phi \cdot f)) = j^l \psi \cdot j^l(\phi \cdot f)$. That is, $G_{\phi \psi} \equiv \psi G_\phi \psi^ {-1}$ is a compact subgroup of $\A_{\phi \cdot f}$ such that $j^l G_{\phi \psi} = \psi^l g_\phi (\psi^l)^{-1}$ is a maximal compact subgroup of $\A^l_{\phi \cdot f}$, where $G_\phi < \A^l_{\phi \cdot f}$ is conjugate in $\A^l$ to $G$ via $(\phi^{-1})^l$. But then, if $\tilde{G} = \phi^{-1} \psi^{-1} G_{\phi \psi} \psi \phi$, $\tilde{G}$ is a maximal compact subgroup of $\A_f$ and we have 
\[
j^l \tilde{G} = (\phi^l)^{-1} (\psi^l)^{-1} \psi^l \phi^l G (\phi^l)^{-1} (\psi^l)^{-1} \psi^l \phi^l = G.
\]
\end{proof}

Let $G$ be a maximal compact subgroup of $\A_f$. By replacing $f$ by a suitably chosen representative of its $\A$-equivalence class, we may assume that $G$ acts linearly. Given a manifold with boundary $M$, we must show that any smooth map $\alpha \colon \partial M \to \A_f/G$ extends to a smooth map $\bar{\alpha} \colon M \to \A_f/G$.

\begin{lem} \label{lift}
Suppose given a smooth map $\alpha \colon \partial M \to \A_f/G$. Then there exists a smooth lift $\tilde{\alpha} \colon \partial M \to \A_f$:
\[
\begin{diagram}
\node[2]{\A_f} \arrow{s,r}{\pi}\\
\node{\partial M} \arrow{ne,t}{\tilde{\alpha}} \arrow{e,b}{\alpha} \node{\A_f/G}
\end{diagram}
\]
\end{lem}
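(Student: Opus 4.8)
The plan is to obtain the lift by the route standard for such groups of germ-equivalences: pass to the Lie group of $l$-jets, where classical structure theory applies, and transfer the result back up to $\A_f$.

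\emph{Step 1 (local lifts; reduction to triviality of a bundle).} By the definition of a smooth map into a quotient given above, there is an open cover $\{U_i\}_{i\in I}$ of $\partial M$ together with local lifts $\tilde\alpha_i\colon U_i\to\A$ of $\alpha|_{U_i}$, represented by smooth fibered diffeomorphisms. Since $G<\A_f$ and $\alpha$ takes values in $\A_f/G$, the coset $\tilde\alpha_i(u)G=\alpha(u)$ lies in $\A_f/G$, which forces $\tilde\alpha_i(u)\in\A_f$; thus the $\tilde\alpha_i$ are smooth local lifts \emph{into} $\A_f$. Passing to a locally finite refinement, the maps $g_{ij}:=\tilde\alpha_i^{-1}\tilde\alpha_j\colon U_i\cap U_j\to\A_f$ take values in $G$ and form a smooth \v{C}ech $1$-cocycle, and a global smooth lift $\tilde\alpha$ is exactly a smooth solution of $h_i=g_{ij}h_j$ with $h_i\colon U_i\to G$ — then $\tilde\alpha|_{U_i}:=\tilde\alpha_i h_i$ — equivalently a section of the pulled-back principal $G$-bundle $\alpha^*(\A_f\to\A_f/G)$ over $\partial M$. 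So it suffices to prove that $\A_f\to\A_f/G$ is a trivial principal $G$-bundle, for then so is every pullback. Note that this genuinely requires the structure of the total space: the fibre $G$ is a nontrivial compact Lie group, not weakly contractible, so the cocycle cannot be killed by naive obstruction theory over $\partial M$ alone.

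\emph{Step 2 (reduction to $l$-jets).} Choose $l$ large enough that $f$ is $l-\A$-determined and, by the preceding lemma, that $j^l(G)$ is a maximal compact subgroup of $\A_f^l$. The group $\A_f^l$ is real algebraic, so by Iwasawa's theorem~\cite[p.~180]{hoch} it is diffeomorphic to $j^l(G)\times\R^N$ for some $N$; in particular $\A_f^l\to\A_f^l/j^l(G)$ is a trivial principal $j^l(G)$-bundle over $\R^N$. Let $N_l:=\ker(j^l|_{\A_f})$ be the normal subgroup of germs in $\A_f$ with trivial $l$-jet, so that $j^l$ identifies $\A_f/N_l$ with $\A_f^l$ and the homogeneous space $\A_f/(N_l\cdot G)$ with $\A_f^l/j^l(G)$; since $j^l|_G$ is injective (Lemma~\ref{basicobservations}~iv), $G$ splits the extension $1\to N_l\to N_l\cdot G\to j^l(G)\to1$. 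Transporting the Iwasawa trivialization up along $j^l\colon\A_f\to\A_f^l$ then presents $\A_f$, as a right-$G$-space, as a product $G\times B$ with $B$ assembled from $N_l$ and $\R^N$; hence $\A_f\to\A_f/G$ is trivial, and by Step 1 the lemma follows.

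\emph{Step 3 (the un-jetting — the main obstacle).} The only non-formal point is that $j^l\colon\A_f\to\A_f^l$ is a locally trivial fibration in the smooth-germ sense of this section and that its fibre $N_l$ is ``contractible'' in the same sense — only then do the trivializations of base and fibre assemble, and only then can the local-to-global patching of the $g_{ij}$ be realized by honest smooth diffeomorphism-germs rather than merely formally. This is handled one jet-degree at a time: filtering $N_l\supset N_{l+1}\supset\cdots$ by order of contact with the identity, each subquotient $N_m/N_{m+1}$ is a finite-dimensional real vector space, so at each stage the obstruction is linear and dies against a partition of unity, after which one passes to the limit by a Borel/Whitney-type construction to recover genuine germs. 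None of this uses the multigerm structure in an essential way: the construction runs over the $s$ source components and the single shared target exactly as in Rim\'anyi's monogerm argument, with only the obvious bookkeeping, and we refer to \cite{rimanyithesis} and \cite{feragenthesis} for the full details.
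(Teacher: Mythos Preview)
Your broad strategy—pass to $l$-jets and invoke Iwasawa—matches the paper's, and Steps~1--2 are correct up to the point where you attempt to ``transport the Iwasawa trivialization up along $j^l$''. But you miss a one-line observation that makes your Step~3 entirely unnecessary, and Step~3 itself is left as a genuine gap.

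The paper's argument is this. The section $\sigma^l\colon \A_f^l/G\to\A_f^l$ (which exists because $\A_f^l/G$ is contractible) is equivalent to a smooth map $\tau\colon\A_f^l\to G$ with $\sigma^l(\pi^l(\alpha))=\tau(\alpha)\cdot\alpha$, where $G$ and $j^l(G)$ are identified via the isomorphism $j^l|_G$. Now simply define
\[
\sigma(\pi(g))\;=\;\tau\bigl(j^l(g)\bigr)\cdot g,\qquad g\in\A_f.
\]
The $G$-equivariance of $\tau$ that makes $\sigma^l$ well-defined on $G$-orbits is inherited verbatim by $\tau\circ j^l$, so $\sigma$ is well-defined on $\A_f/G$; and it is smooth in the required sense because over each $U_i$ one has $\sigma(\alpha(u))=\tau\bigl(j^l\tilde\alpha_i(u)\bigr)\cdot\tilde\alpha_i(u)$, the product of a smooth $G$-valued map (the jet map into the finite-dimensional Lie group $\A_f^l$, followed by the smooth $\tau$) with the given smooth local lift. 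Then $\tilde\alpha=\sigma\circ\alpha$ is the global lift. No analysis of $N_l$, no jet-filtration tower, no Borel--Whitney limit is needed.

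Your Step~3, by contrast, asserts that $j^l\colon\A_f\to\A_f^l$ is a ``locally trivial fibration in the smooth-germ sense'' with ``contractible'' fibre, filters $N_l$ by order of contact, and then defers the details to the literature. None of these notions has been set up in the germ framework used here; even the surjectivity of $j^l|_{\A_f}$ (needed for your identification $\A_f/N_l\cong\A_f^l$) requires an $\A_r$-determinacy argument you do not supply. So Step~3 is a gap---but, as the paper shows, a superfluous one: the retraction onto $G$ already lives at the jet level and pulls back along $j^l$ for free.
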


\begin{proof}
J\"anich \cite{janich} and Rim{\'a}nyi \cite{rimanyi} claim the existence of a lift $\tilde{\alpha}$ as a consequence of a section $\sigma \colon \A_f/G \to \A_f$ lifted from the section $\sigma^l \colon \A^l_f/G \to \A^l_f$, which exists because $\A^l_f/G$ is contractible. See the diagram (\ref{eq1}) below. We take the time to explain how such a lift is constructed. Associated with the section $\sigma^l$ is a map $\tau \colon \A^l_f \to G$ such that $\sigma^l \circ \pi^l(\alpha) = \tau(\alpha) \cdot \alpha$ for all $\alpha \in \A^l_f$. The section $\sigma$ is now given by $\sigma(\pi(g)) = \tau(j^l(g)) \cdot g$ for any representative $g$ of the orbit $\pi(g)$. The smoothness of $\tau$ is ensured by the fact that $\tau(g) = (\sigma^l ( \pi^l(\alpha)) \cdot \alpha^{-1}$. \qed
\end{proof}

\begin{equation} \label{eq1}
\begin{diagram}
\node[2]{\A_f} \arrow{e,t}{j^l} \arrow{s,l}{\pi} \node{\A^l_f} \arrow{s,r}{\pi^l}\\
\node{\A_f/G} \arrow{e,b}{\id} \arrow{ne,t,..}{\sigma?} \node{\A_f/G} \arrow{e,b}{\bar{j^l}} \node{\A^l_f/G} \node{\A^l_f/G} \arrow{w,b}{\id} \arrow{nw,t}{\sigma^l}
\end{diagram}
\end{equation}

Consider the composition $\beta=\bar{j}^l \circ \alpha \colon \partial M \to \A_f/G \to \A^l_f/G$. Since $\A^l_f/G$ is contractible, we can construct an extension $\bar{\beta} \colon M \to \A^l_f/G$. Composing with the section $\sigma^l$, we obtain a map $\gamma \colon M \to \A^l_f/G \to \A^l_f$.

\begin{lem} \label{normalization}
There exists a smooth map $\delta \colon \partial M \to G$ such that $j^l \circ (\delta \cdot \tilde{\alpha}) \colon \partial M \to \A^l_f$ coincides with $\gamma|\partial M$.
\end{lem}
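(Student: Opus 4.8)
The plan is to recognize $\gamma|\partial M$ and $j^l\circ\tilde\alpha$ as two smooth maps $\partial M\to\A^l_f$ lying over one and the same map into $\A^l_f/G$, and then to let $\delta$ be their pointwise ratio. Concretely, I would first chase the diagram (\ref{eq1}) to check that $\pi^l\circ(\gamma|\partial M)=\pi^l\circ(j^l\circ\tilde\alpha)$. On the one hand $\gamma=\sigma^l\circ\bar\beta$ and $\pi^l\circ\sigma^l=\id$, so $\pi^l\circ\gamma=\bar\beta$, whose restriction to $\partial M$ is $\beta=\bar j^l\circ\alpha$. On the other hand the square in (\ref{eq1}) commutes, i.e.\ $\pi^l\circ j^l=\bar j^l\circ\pi$, and $\pi\circ\tilde\alpha=\alpha$ by Lemma~\ref{lift}, so $\pi^l\circ j^l\circ\tilde\alpha=\bar j^l\circ\pi\circ\tilde\alpha=\bar j^l\circ\alpha=\beta$. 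Hence for each $u\in\partial M$ the points $\gamma(u)$ and $j^l\tilde\alpha(u)$ of $\A^l_f$ lie in the same $G$-coset, and I set $\delta(u):=\gamma(u)\cdot(j^l\tilde\alpha(u))^{-1}$, which then lies in $G$.

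It remains to check smoothness together with the jet identity. The map $j^l\circ\tilde\alpha\colon\partial M\to\A^l_f$ is a genuine smooth map into the finite-dimensional Lie group $\A^l_f$ (even though $\tilde\alpha$ is only ``smooth'' in the fibered sense defined above, its $l$-jet is a bona fide smooth map), and likewise $\gamma=\sigma^l\circ\bar\beta$ is smooth. Since multiplication and inversion in $\A^l_f$ are smooth and $G\subset\A^l_f$ is a closed (compact) subgroup, $\delta\colon\partial M\to G$ is smooth. Consequently $\delta\cdot\tilde\alpha\colon\partial M\to\A_f$, being the pointwise composite of the smoothly varying linear family $\delta$ with the smooth map $\tilde\alpha$, is again smooth in the fibered sense.

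Finally, because $f$ was chosen so that $G<\A_f$ acts linearly, $j^l$ restricts on $G$ to the inclusion $G\hookrightarrow\A^l_f$ (the $l$-jet of a linear map is that linear map), so $j^l\delta=\delta$ inside $\A^l_f$; as $j^l$ is a group homomorphism this yields $j^l(\delta\cdot\tilde\alpha)(u)=(j^l\delta(u))(j^l\tilde\alpha(u))=\delta(u)\cdot j^l\tilde\alpha(u)=\gamma(u)$ for all $u\in\partial M$, which is exactly the assertion. I do not expect a genuine obstacle here: the only thing requiring care is keeping the left/right coset conventions for $\pi$, $\pi^l$, the section $\sigma^l$ and the product $\delta\cdot\tilde\alpha$ mutually consistent, and confirming that the various ``smoothness'' notions line up — both of which are bookkeeping rather than substance.
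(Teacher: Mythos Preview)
Your argument is correct and follows essentially the same route as the paper: both rely on the diagram chase showing $\pi^l\circ(\gamma|\partial M)=\pi^l\circ(j^l\circ\tilde\alpha)$, and both produce $\delta$ as the $G$-valued discrepancy between these two lifts. The paper packages this via an equivariant trivialization $\varphi=(h,\pi^l)\colon\A^l_f\stackrel{\cong}{\to}G\times\A^l_f/G$, setting $\delta=(h\circ\gamma|\partial M)\cdot(h\circ j^l\tilde\alpha)^{-1}$ and then verifying the identity componentwise under $\varphi$; you bypass $h$ entirely and take $\delta=\gamma\cdot(j^l\tilde\alpha)^{-1}$ directly in $\A^l_f$. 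Since $h$ is $G$-equivariant and the two base projections already agree, these two formulas yield the same $\delta$, so the difference is purely organizational---your version is a little more direct, while the paper's makes the role of the trivialization (coming from contractibility of $\A^l_f/G$) more visible. Your caveat about left/right coset conventions is well placed: the paper's notation $\A_f/G$ together with the formula $\sigma^l(\pi^l(\alpha))=\tau(\alpha)\cdot\alpha$ in Lemma~\ref{lift} indicates the quotient is by the left $G$-action, which is exactly what you need for $\gamma(u)\cdot(j^l\tilde\alpha(u))^{-1}\in G$.
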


\begin{proof}
Construct smooth maps
\[
\begin{array}{ll}
\delta_1 \colon \partial M \to G, & \delta_1=h \circ \gamma|\partial M,\\
\delta_2 \colon \partial M \to G, & \delta_2=h \circ \tilde{\alpha},
\end{array}
\]
and set $\delta=\delta_1 \cdot \delta_2^{-1}$. Then $j^l \circ (\delta \cdot \tilde{\alpha}) = \gamma|\partial M$ if and only if 
\begin{equation} \label{phi}
\varphi \circ j^l \circ (\delta \cdot \tilde{\alpha}) = \varphi \circ \gamma|\partial M,
\end{equation}
and it is clear from the commutative diagram below that $\pr_{\A^l_f/G} \circ \varphi \circ j^l \circ (\delta \cdot \tilde{\alpha}) =\pi^l \circ j^l \circ (\delta \cdot \tilde{\alpha}) = \pi^l \circ j^l \circ \tilde{\alpha} = \bar{\beta}|\partial M = \pr_{\A^l_f/G} \circ \varphi \circ \gamma|\partial M$,
\[
\begin{diagram}
\node[2]{\A_f} \arrow{e,t}{j^l} \arrow{s,r}{\pi} \node{\A^l_f} \arrow{se,t}{\varphi} \arrow{s,,l}{\pi^l} \arrow{e,t}{h} \node{G}\\
\node{\partial M} \arrow{ne,t}{\tilde{a}} \arrow{e,b}{\alpha} \arrow{s} \node{\A_f/G} \arrow{e,b}{\bar{j}^l} \node{\A^l_f/G} \node{G \times \A^l_f/G} \arrow{n,r}{\pr} \arrow{w,b}{\pr}\\
\node{M} \arrow{nee,b}{\bar{\beta}}
\end{diagram}
\]
while $\pr_G \circ \varphi \circ j^l \circ (\delta \cdot \tilde{\alpha}) = h \circ j^l \circ (\delta \cdot \tilde{\alpha}) = \delta \cdot (h \circ j^l \circ \tilde{\alpha}) = (h \circ \gamma|\partial M ) \cdot (h \circ j^l \circ \tilde{\alpha})^{-1} \cdot (h \circ j^l \circ \tilde{\alpha}) = h \circ \gamma| \partial M
= \pr_G \circ \varphi \circ \gamma|\partial M$. Thus (\ref{phi}) is true, and this concludes the proof of lemma~\ref{normalization}. \qed
\end{proof}

By lemma~\ref{normalization} we see that replacing the old map $\tilde{\alpha}$ by $\delta \cdot \tilde{\alpha}$, we may assume that $j^l \circ \tilde{\alpha} = \gamma| \partial M$. This will enable us to construct a map $\bar{\bar{\alpha}} \colon M \to \A$ which extends $\tilde{\alpha}$. Without the assumption $j^l \circ \tilde{\alpha} = \gamma| \partial M$ we risk -- for instance, if $M=[0,1]$ -- that $j^l(\tilde{\alpha}(0))$ and $j^l(\tilde{\alpha}(1))$ end up in different components of $\A^l_f$, in which case an extension of $\tilde{\alpha}$ is impossible. This, however, is not crucial in order to get the map into $\A_f/G$.

As a first step, we construct a map $\alpha' \colon M \to \A_{j^lf}=\{\phi \in \A|j^l\phi \in \A^l_f\}$, extending $\tilde{\alpha}$, and in particular such that $\pi_{\A_{j^lf}} \circ \alpha'$ extends our map $\partial M \stackrel{\alpha}{\to} \A_f/G \to \A_{j^l f}/G$, where the second map is induced by the inclusion, and such that $j^l \alpha' = \gamma$.

In order to do this, we must construct diffeomorphism germs at $M \times \{0\}$:
\[
\begin{array}{c}
F_1 =\bigsqcup_{k=1}^s F_1^k \colon \left( \bigsqcup_s M \times \R^n, \bigsqcup_s M \times 0 \right) \to \left( \bigsqcup_s M \times \R^n, \bigsqcup_s M \times 0 \right)\\
F_2 \colon \left( M \times \R^p, M \times 0 \right) \to \left( M \times \R^p, M \times 0 \right)
\end{array}
\]
from given germs in $\partial M \times \bigsqcup_s \R^n$ and $\partial M \times \R^p$, and where the $l$-jets are given everywhere.

We go through the construction for one of the $F^k_1$; the proof for $F_2$ is similar. If we can find $F^k_1$ locally, then we get a global solution by using a partition of unity to add the solutions together fiberwise. This gives a diffeomorphism germ since the $l$-jet, and thus in particular the differential $DF^k_1$, is fixed everywhere.

At points in the interior of $M$, we can just define the local $F^k_1$ by taking the $l^{\textrm{th}}$ degree polynomial representative of the given jet. Near points in $\partial M$, we construct the component functions of the local $F^k_1$ in the following way:

Given a polynomial $P$ of degree $l$ in the variables $x_1, \ldots, x_n$ with coefficients from the ring $\mathscr{E}(r)$ of smooth functions in $r$ variables, and a smooth function $p_0 \colon \R^{r-1+n} \to \R$ such that $j^l_x p_0=P(0, u_2, \ldots, u_r, x_1, \ldots, x_n)$, (here the coordinates of $\R^n$ are denoted by $x_i$ and the local coordinates of $M$ are denoted by $u_j$, where $\partial M$ is given by $u_1=0$) we construct a smooth function $p \colon \R^{r+n} \to \R$ such that $p|\{u_1=0\}=p_0$, and $j^l_x p=P(u_1, \ldots, u_r, x_1, \ldots, x_n)$. One function which satisfies all of the above, is the combination $p(u,x)=p_0(u_2, \ldots, u_r,x)-P(0, u_2, \ldots, u_r, x)+P(u_1, \ldots, u_r, x)$.

It follows that the map $\alpha' \colon M \to \A_{j^lf}$ exists, and it is represented by the product $F=(F^1_1, \ldots F_1^s, F_2)$. We use it to construct the extension sought $\bar{\alpha} \colon M \to \A_f/G$:

We compare the maps
\[
\phi_\nu \colon M \times \bigsqcup_s \R^n \to M \times \R^p \quad \nu = 1,2
\]
given by
\[
\begin{array}{l}
\phi_1 \colon (u,x) \mapsto \left(u, (\alpha'(u) \cdot f)(x)\right),\\
\phi_2 \colon (u,x) \mapsto \left(u, f(x)\right).
\end{array}
\]
These two maps coincide on $\partial M \times \bigsqcup_s \R^n$, and their $l$-jets coincide at each $M$-level, thus we can apply proposition~\ref{extensionprop} to find a smooth map $\psi \colon M \to \A$ such that $\psi \cdot \phi_1=\phi_2$, $j^1 \psi = \id$ and such that $\psi|\partial M=\id$. Now the map $\bar{\bar{\alpha}} \colon M \to \A_f, \quad \bar{\bar{\alpha}} = \psi \cdot \alpha'$, is an extension of $\tilde{\alpha}$ and $j^l \bar{\bar{\alpha}}=\gamma$ (because $j^1 \psi=\id$). Most importantly, it defines an extension $\bar{\alpha}=\pi \circ \bar{\bar{\alpha}} \colon M \to \A_f/G$ of $\alpha$. \qed
\end{proof}

From the proof of theorem~\ref{contractibilitythm}, we see that the following corollary also holds:

\begin{cor} \label{coverextension}
Suppose that $G$ is a maximal compact subgroup of $\A_f$. Given a smooth map $\alpha \colon \partial M \to \A_f$, we can find a smooth map $\gamma \colon \partial M \to G$ and a smooth map $\tilde{\alpha} \colon M \to \A_f$ such that $\tilde{\alpha}|\partial M = \gamma \cdot \alpha$. \qed
\end{cor}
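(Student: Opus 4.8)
The plan is to rerun the proof of Theorem~\ref{contractibilitythm}, feeding in the given map $\alpha \colon \partial M \to \A_f$ in the place of the lift $\tilde{\alpha}$ that was produced there by Lemma~\ref{lift} (which we now skip), and reading off the correcting factor supplied by Lemma~\ref{normalization} as the desired map $\gamma$. As in that proof, I would first replace $f$ by a representative of its $\A$-equivalence class for which $G$ acts linearly; conjugating $\alpha$ by the element of $\A$ effecting this replacement converts a solution for the new data into one for the old, so we may assume $G < \bigsqcup_s GL_n \times GL_p$. Fix $l \in \N$ large enough (as in the lemma preceding Lemma~\ref{lift}) that $j^l G$ is a maximal compact subgroup of the real algebraic group $\A^l_f$; then $\A^l_f/G$ is contractible and carries the smooth section $\sigma^l$ and the associated smooth map $h \colon \A^l_f \to G$ used in the proofs of Lemmas~\ref{lift} and~\ref{normalization}.

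Put $\beta = \pi^l \circ j^l \circ \alpha \colon \partial M \to \A^l_f/G$. By contractibility of $\A^l_f/G$, extend $\beta$ to a smooth map $\bar{\beta} \colon M \to \A^l_f/G$ and set $\gamma_M = \sigma^l \circ \bar{\beta} \colon M \to \A^l_f$. The argument of Lemma~\ref{normalization} goes through verbatim with $\alpha$ in the role of $\tilde{\alpha}$ and $\gamma_M$ in the role of $\gamma$: setting $\delta = (h \circ \gamma_M|\partial M) \cdot (h \circ j^l \circ \alpha)^{-1} \colon \partial M \to G$, one gets $j^l \circ (\delta \cdot \alpha) = \gamma_M|\partial M$. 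Take $\gamma := \delta$, and from now on replace $\alpha$ by $\gamma \cdot \alpha$, so that $j^l \circ \alpha = \gamma_M|\partial M$.

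Now run the polynomial-patching step of the proof of Theorem~\ref{contractibilitythm} to construct $\alpha' \colon M \to \A_{j^l f}$ extending $\alpha$ with $j^l \alpha' = \gamma_M$ everywhere. Comparing $\phi_1 \colon (u,x) \mapsto (u,(\alpha'(u)\cdot f)(x))$ with $\phi_2 \colon (u,x) \mapsto (u,f(x))$ — which agree on $\partial M$ and have equal $l$-jets at each $M$-level — Proposition~\ref{extensionprop} yields a smooth $\psi \colon M \to \A$ with $\psi \cdot \phi_1 = \phi_2$, $j^1\psi = \id$ and $\psi|\partial M = \id$. Then $(\psi(u)\alpha'(u)) \cdot f = f$ for every $u$, so $\tilde{\alpha} := \psi \cdot \alpha'$ is a smooth map $M \to \A_f$; and since $\psi|\partial M = \id$ we get $\tilde{\alpha}|\partial M = \alpha'|\partial M = \gamma \cdot \alpha$. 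Conjugating $\gamma$ and $\tilde{\alpha}$ back undoes the initial linearizing replacement of $f$, which finishes the proof.

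I do not expect a genuine obstacle here: contractibility of $\A^l_f/G$ with its section $\sigma^l$ and the map $h$, the normalization trick, the polynomial patching, and Proposition~\ref{extensionprop} are all already in hand from the proof of Theorem~\ref{contractibilitythm}. The one point needing care is purely bookkeeping — tracking that the constructed map restricts on $\partial M$ not to $\alpha$ itself but to $\gamma \cdot \alpha$; this is exactly the statement of the corollary, and it explains why the factor $\delta$ from Lemma~\ref{normalization} cannot in general be discarded.
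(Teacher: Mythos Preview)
Your proposal is correct and is exactly the approach the paper intends: the paper states the corollary with a bare \qed\ and the remark ``From the proof of theorem~\ref{contractibilitythm}, we see that the following corollary also holds,'' and what you have written is precisely the extraction of that argument---skip Lemma~\ref{lift}, feed the given $\alpha$ in as $\tilde{\alpha}$, take the correcting element $\delta$ of Lemma~\ref{normalization} as the map $\gamma$, and then the remaining steps produce $\tilde{\alpha} \colon M \to \A_f$ with $\tilde{\alpha}|\partial M = \gamma \cdot \alpha$.
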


\section{The structure of $MC(\A_f)$}

\subsection{Factorization of $MC(\A_f)$ for multigerms}

The main goal of this section is to prove that maximal compact subgroups of $\A_f$ for multigerms $f$ decompose into products of maximal compact subgroups of $\A_{g_i}$ for minimal representatives $g_i$ of their associated monogerms:

\begin{thm} \label{multitomono}
We are given a ministable multigerm
\[
f=f_1 \sqcup \ldots \sqcup f_s \colon (\R^n,0) \sqcup \ldots \sqcup (\R^n,0) \to (\R^p,0),
\]
where 
\begin{equation} \label{standardform}
f_i=\sigma_i \circ (g_i \times \emph{\id}) \colon \R^{n_i} \times \prod_{j=1, j \neq i}^s \R^{p_j} \stackrel{g_i \times \emph{\id}}{\longrightarrow} \R^{p_i} \times \prod_{j=1, j \neq i}^s \R^{p_j} \stackrel{\sigma_i}{\rightarrow} \prod_{j=1}^s \R^{p_j},
\end{equation}
where $g_i$ is a ministable unfolding of a rank $0$ representative $h_i \colon \R^{\tilde{n}_i} \to \R^{\tilde{p}_i}$ of $f_i$, and where $n=n_i+\sum_{j=1, j \neq i}^s p_j$ and $p=\sum_{j=1}^s p_j$.

Then the maximal compact subgroup factors as
\[
MC(\A_f) \cong \prod_{j=1}^s MC(\A_{g_j}) \cong \prod_{j=1}^s MC(\K_{h_j}).
\]
\end{thm}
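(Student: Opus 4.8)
The plan is to prove the two isomorphisms separately, starting with the second one, which is the more local statement. The isomorphism $MC(\A_{g_j}) \cong MC(\K_{h_j})$ for each fixed $j$ should follow by combining the monogerm theory with Lemma~\ref{basicobservations}. Since $g_j$ is a ministable unfolding of the rank $0$ germ $h_j$, the two germs are $\K$-equivalent, so $\K_{g_j} \cong \K_{h_j}$; moreover, because $g_j$ is stable (hence, in particular, finitely $\A$-determined and finitely $\K$-determined), we may apply Theorem~\ref{existenceofmc} to get that $MC(\A_{g_j})$ and $MC(\K_{g_j})$ exist and are unique up to conjugacy. One then shows $MC(\A_{g_j}) = MC(\K_{g_j})$ by invoking part \textit{ii)} of Lemma~\ref{basicobservations}: after linearizing a maximal compact subgroup $G$ of $\K_{g_j}$, we have $G < \K_{g_j} \cap GL_{n+p} < \A_{g_j}$, and since $MC(\A_{g_j}) < MC(\K_{g_j})$ by part \textit{i)}, a dimension/maximality argument forces equality. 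Finally, $\K$-equivalence identifies $MC(\K_{g_j})$ with $MC(\K_{h_j})$ because the $\K$-groups, and hence their maximal compact subgroups, are carried to one another by the conjugating element.

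For the first isomorphism $MC(\A_f) \cong \prod_j MC(\A_{g_j})$, I would work at the level of $1$-jets, exploiting part \textit{iv)} of Lemma~\ref{basicobservations}, which says $j^1$ is injective on any compact subgroup. The embedding $\iota$ from~(\ref{aemb}) gives a monomorphism $\prod_j \A_{g_j} \hookrightarrow \A_f$ (this is the multigerm analogue of the decomposition~(\ref{multigermdecomp})), so $\prod_j MC(\A_{g_j})$ embeds as a compact subgroup of $\A_f$, hence is conjugate into $MC(\A_f)$; the content is the reverse inclusion, i.e.\ that no \emph{larger} compact group can sit inside $\A_f$. Here the key geometric fact is the one flagged in the introduction: any element of $\A_f$ must preserve the ``axes'' $\sigma_i(\R^{n_i} \times \{0\})$ where the singularities $g_i$ live — but for a \emph{compact} (hence, after Bochner, linearizable) subgroup $G < \A_f$ this becomes usable. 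After simultaneously linearizing $G$ and keeping $f$ in the normal form~(\ref{standardform}) — which is exactly the subtlety the authors warn about, and which must be addressed using Lemma~\ref{equivdeterminacy} or the equivariant linearization Lemma~\ref{stronglin} to preserve both structures — the linear group $G$ permutes and/or preserves the relevant coordinate subspaces, and the block structure of $f$ forces each element of $G$ to split as a product respecting the factorization, i.e.\ to lie in the image of $j^1 \circ \iota$. Combined with injectivity of $j^1$, this shows $G$ is contained in (a conjugate of) $\iota(\prod_j \A_{g_j})$, so $MC(\A_f)$ is conjugate to a compact subgroup of $\prod_j \A_{g_j}$, which in turn is conjugate into $\prod_j MC(\A_{g_j})$.

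The main obstacle I expect is precisely the simultaneous normalization step: arranging coordinates in which $f$ has the decomposed form~(\ref{standardform}) \emph{and} a given maximal compact $G < \A_f$ acts linearly, and then reading off from the linear action that $G$ respects the product decomposition. The authors essentially say this is the heart of the difficulty — one cannot naively replace $G$ by $j^1 G$ because $j^1 G$ need not fix $f$. The way through is to first linearize $G$ by Lemma~\ref{stronglin}\textit{ii)} at the cost of moving $f$ to an $\A$-equivalent (but possibly non-decomposed) germ $f'$, then use that $f'$ is still stable, apply Lemma~\ref{standardstablemultigerm} to bring $f'$ back to the form~(\ref{multigermdecomp}), and finally invoke the equivariant finite determinacy of Lemma~\ref{equivdeterminacy} to absorb the remaining coordinate change by a $G$-equivariant diffeomorphism, so that $f$ is in normal form and $G$ remains linear. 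Once both structures coexist, the combinatorics of which linear subspaces can be preserved — each $g_i$ being rank $0$ in its own block and an identity unfolding in the complementary directions — pins down the block-diagonal shape of $G$, and the factorization follows. A secondary point to be careful about is whether $G$ is allowed to permute the source components: since $\A$-equivalences fix the source base points (as noted after~(\ref{germspace})), no such permutation occurs, which is what makes the clean product (rather than a wreath product) come out.

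\medskip
\noindent\textbf{Remark.} One should double-check that the hypothesis ``$g_i$ is a ministable unfolding of a rank $0$ representative'' is used in full strength: it guarantees both that each block is genuinely in the standard stable form~(\ref{standardform}) and that the $\K$-group $\K_{h_j}$ of the minimal representative is the right target of the isomorphism, via the local-algebra identification underlying Lemma~\ref{standardstablemultigerm}.
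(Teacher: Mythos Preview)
Your plan for the first isomorphism has a real gap at the step you yourself identify as the obstacle. The ``simultaneous normalization'' does not go through: after linearizing $G$ you have a linear $G<\A_{f'}$, and applying Lemma~\ref{standardstablemultigerm} produces some $\phi\in\A$ with $\phi\cdot f'$ in the decomposed form, but there is no reason $\phi\cdot f'$ is $G$-invariant, so Lemma~\ref{equivdeterminacy} cannot be invoked (it requires both germs to be $G$-invariant with equal $k$-jet). Even granting both structures at once, the assertion that linearity plus the block form of $f$ ``pins down the block-diagonal shape of $G$'' is precisely what the introduction warns is \emph{not} clear: invariance of the singular axes $\sigma_i(\R^{n_i}\times\{0\})$ does not force the target component to be a product $\psi_1\times\cdots\times\psi_s$. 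A secondary slip: $g_j$ and $h_j$ live in different dimensions and are not $\K$-equivalent, so your route to $MC(\A_{g_j})\cong MC(\K_{h_j})$ via $\K_{g_j}\cong\K_{h_j}$ is not correct either; that isomorphism is the monogerm theorem of Wall (Remark~\ref{s=1rem}).

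The paper's argument is quite different and sidesteps the decomposition of $\A_f$ entirely by passing to $\K$ of the rank-$0$ multigerm $h=\bigsqcup_i h_i$. The key new ingredient is Lemma~\ref{lem8}, a multigerm version of Rim\'anyi's theorem: any compact $G<\A_f$ is isomorphic to a compact subgroup of $\K_h$, hence $MC(\A_f)<MC(\K_h)$. The crucial point is that, unlike $\A$, the group $\K$ \emph{does} split trivially over source components, $\K_h\cong\prod_i\K_{h_i}$ (p.~\pageref{unionK}), so $MC(\K_h)\cong\prod_i MC(\K_{h_i})$ comes for free. The monogerm case then gives $MC(\K_{h_i})\cong MC(\A_{g_i})$, and the embedding~(\ref{aemb}) closes the sandwich $MC(\A_f)<\prod_i MC(\A_{g_i})<MC(\A_f)$. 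The genuinely delicate step --- that the $G$-action on the unfolding directions is determined by its action on the rank-$0$ piece --- is carried out inside the proof of Lemma~\ref{lem8} via the $G$-equivariant identifications $\R^P\cong\mathscr{N}_{F_0}\cong\mathscr{N}_{f_0}$, which use infinitesimal stability rather than the combinatorics of invariant subspaces you are appealing to.
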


\begin{rem} \label{s=1rem}
When $s=1$, this is \cite[Proposition~3.2]{wallsymm}.
\end{rem}

The proof of this theorem rests on relations between maximal compact subgroups of $\K$- and $\A$-equivalences, the monogerm versions of which are well known. The following lemma is analogous to \cite[Theorem~1.1]{rimanyi}.

\begin{lem} \label{lem8}
Let $f \colon \bigsqcup_s (\R^n,0) \to (\R^p,0)$ have rank $0$ and let $F \colon \bigsqcup_s (\R^N,0) \to (\R^P,0)$ be a ministable unfolding of $f$. Suppose that $G < \A_F$ is a compact subgroup. Then there exists a compact subgroup of $\K_f$ which is isomorphic to $G$. In particular, $MC(\A_F) < MC(\K_f)$.
\end{lem}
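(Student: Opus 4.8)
The plan is to transport the compact group $G < \A_F$ to a compact group of $\K$-equivalences of the unfolded germ $F$, and then push that down to $\K_f$ using the standard fact that a ministable unfolding is $\K$-equivalent to the product of $f$ with a nonsingular germ of complementary dimensions. First I would recall from Lemma~\ref{basicobservations}(i) that $\A_F < \K_F$, so $G$ is automatically a compact subgroup of $\K_F$. By Lemma~\ref{basicobservations}(iii) (or the linearization Lemma~\ref{stronglin}) I may, after conjugating $F$ within its $\K$-equivalence class and replacing $G$ by a conjugate, assume that $G$ acts linearly on $\bigsqcup_s \R^N \times \R^P$; since $F$ has rank $0$ at each base point after passing to the unfolding only in the stabilization directions, I would set things up so the conjugated $F$ is in the split form $F = f \times \id_{\R^{P-p}}$ (on each component, up to the coordinate shuffles $\sigma_i$), which is legitimate because $F$ is a ministable unfolding of the rank-$0$ germ $f$ and hence $\K$-equivalent to such a split map by \cite[Theorem~2.1]{ma4} together with the stability argument of Lemma~\ref{standardstablemultigerm}.

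Next I would produce the subgroup of $\K_f$. The key point is that $\K_F = \K_{f \times \id}$ contains a natural copy of $\K_f$ (act on the $\R^n$-factor via a $\K_f$-element and on the unfolding $\R^{P-p}$-factor trivially), and conversely there is a projection: given $H \in \K_F$ linear and preserving the axis where $f$ sits, restricting $H$ to the linear subspace $\bigsqcup_s \R^n \times \R^p \times \{0\}$ and composing with the projection that kills the unfolding coordinates yields a linear element of $\K_f$. The crux is to check that $G \mapsto \overline{G}$ under this restriction-projection is an \emph{injective} homomorphism: injectivity follows because $G$ is linear and preserves the splitting (here I would invoke an argument parallel to Lemma~\ref{basicobservations}(ii)/(iv) — a linear $\K$-element determined by its action on the $f$-axis — so distinct elements of $G$ restrict to distinct elements of $\K_f$), and the group law is respected because restriction to an invariant subspace of a linear action is a homomorphism. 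Hence $\overline{G} < \K_f$ is a compact subgroup isomorphic to $G$.

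Finally, the statement $MC(\A_F) < MC(\K_f)$ follows by applying the isomorphism just constructed to $G = MC(\A_F)$: this gives a compact subgroup of $\K_f$ isomorphic to $MC(\A_F)$, which is therefore conjugate in $\K_f$ to a subgroup of $MC(\K_f)$ by definition of maximal compact subgroup (Theorem~\ref{existenceofmc} guarantees $MC(\K_f)$ exists and is unique up to conjugacy, since $f$ is finitely $\K$-determined — it has rank $0$, so it is finitely $\K$-determined exactly when $F$ is finitely $\A$-determined, which is the standing hypothesis making $\A_F$ and $MC(\A_F)$ meaningful). I expect the main obstacle to be the bookkeeping in the second paragraph: pinning down precisely which linear coordinates realize both the split form of $F$ \emph{and} the linearity of $G$ simultaneously, and verifying that the restriction-projection really lands in $\K_f$ (i.e.\ that $H_0$-compatibility in the commuting square defining $\K$ is preserved under restriction to the $f$-axis). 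This is exactly the ``simultaneous linearization vs.\ normal form'' tension flagged in the introduction, but here it is tractable because $f$ has rank $0$, so no genuine unfolding parameters interact with the group action.
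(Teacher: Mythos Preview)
Your plan has the right overall shape (linearize $G$, project to a $\K$-stabilizer of a rank-$0$ germ, check injectivity), but the step where you assume that after conjugation $F$ is simultaneously in split form $f \times \id$ \emph{and} $G$ is linear is a genuine gap. You acknowledge this is ``exactly the simultaneous linearization vs.\ normal form tension flagged in the introduction'' and then assert that rank~$0$ makes it tractable, but you give no mechanism. Linearizing $G$ replaces $F$ by some $F_0 = h \cdot F$; there is no reason this $F_0$ is in split form, and putting it in split form via a further $\K$-equivalence will in general destroy the linearity of $G_0$. The paper's proof avoids this entirely: after linearizing $G$ to $G_0 < \A_{F_0}$, it does \emph{not} try to split $F_0$. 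Instead it uses that $G_0$, being linear, preserves $T_0\Gamma(F_0)$, and then extracts a rank-$0$ germ $f_0$ by choosing $G_0$-invariant subspaces $A^k = T_0\Gamma(F_0^k) \cap (\R^N \times 0)$ and $G_0$-invariant complements $D^k$ to $\pr_{\R^P}(T_0\Gamma(F_0^k))$, setting $f_0^k = (\pr_{D^k}\circ F_0^k)|A^k$. One then checks $F_0$ is a ministable unfolding of this $f_0$ (so $f_0 \sim_\K f$), and projects $G_0$ onto $\prod GL(A^k)\times GL(D^k)$. The rank-$0$ hypothesis is what makes $\dim A^k = n$, but it does not by itself hand you a split form compatible with $G_0$.

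Your injectivity argument is also underpowered. Lemma~\ref{basicobservations}(ii) says a linear $\K$-element is an $\A$-element, and (iv) says $j^1$ is injective on compact subgroups; neither says that a linear element of $\A_{F_0}$ is determined by its action on the $f$-axis. The paper's argument here is substantive: it uses ministability of $F_0$ to identify $\R^P$ $G_0$-equivariantly with $\mathscr{N}_{F_0} \cong \mathscr{N}_{f_0}$ via $wF_0$ and the unfolding map, so that the $G_0$-action on the target unfolding directions $C^k$ is forced by the action on $A^k$ and $D^k$. Without invoking ministability in this way (or something equivalent), you cannot conclude that the projection to $\K_{f_0}$ is injective on $G_0$.
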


\begin{proof}
Since $G$ is compact, there exists $h \in \A$ such that $G_0=hGh^{-1}$ is linear. If $F_0=h \cdot F$, then $G_0 < \A_{F_0}$. We shall construct a map $f_0 \sim_{\K} f$ such that $F_0$ is a ministable unfolding of $f_0$.

Denote by $\Gamma(F_0)$ the graph of $F_0$. If we write $F_0=\bigsqcup_{k=1}^s F_0^k$, then 
\[
\Gamma(F_0)= \bigsqcup_{k=1}^s \Gamma(F_0^k)=\bigsqcup_{k=1}^s \{(x, F^k_0(x))|x \in \R^N\}. 
\]
Since $G_0$ is linear and $\Gamma(F_0)$ is $G_0$-invariant, the tangent space 
\[
T_0\Gamma(F_0) \subset (\R^N \sqcup \ldots \sqcup \R^N) \times \R^P
\] 
is also $G_0$-invariant. 

Define subspaces 
\[
\begin{array}{ll}
A^k & :=T_0(\Gamma(F^k_0)) \cap \left(\R^N \times \{0\}\right),\\
C^k & :=\pr_{\R^P}(T_0(\Gamma(F^k_0))),
\end{array}
\] 
which are also $G_0$-invariant. Choose $G_0$-invariant complements $B^k$ and $D^k$ of $A^k$ and $C^k$ in $\R^N$ and $\R^P$, respectively. Then $A^k \cong \R^n$, $B^k \cong \R^r$, $C^k \cong \R^r$ and $D^k \cong \R^p$.

Denote by $f_0$ the map germ
\[
\bigsqcup_{k=1}^s f_0^k \colon \bigsqcup_s \R^n \to \R^p,
\]
where $f_0 = (\pr_{D^k} \circ F^k_0)| A^k$ for each $k = 1 , \ldots, s$. We now prove that through its action on $\bigsqcup_{k=1}^s \R^N \times \R^P$, $G_0$ is a subgroup of $\K_{f_0} = \K_{f_0^1} \times \cdots \times \K_{f_0^s}$.

\subsubsection*{$F_0$ is a ministable unfolding of $f_0$}

It follows from the definition of $C^k$ that $\pr_{C^k} \circ F_0$ is a submersion for each $k$, so in particular the inclusion of $D^k$ in $\R^P$ is transverse to the germ $F^k_0$. Furthermore, the diagram
\[
\begin{diagram}
\node{A^k} \arrow{e,t}{f^k_0} \arrow{s} \node{D^k} \arrow{s} \arrow{e} \node{\R^p} \arrow{s}\\
\node{A^k \times B^k} \arrow{e,b}{F^k_0} \node{D^k \times C^k} \arrow{e} \node{\R^p \times \R^r}
\end{diagram}
\]
is trivially Cartesian for each $k$, and it follows that  $F_0$ unfolds $f_0$. The unfolding $F_0$ is ministable because it is $\A$-equivalent to the ministable map $F$.

Note, moreover, that since $F_0$ and $F$ are $\A$-equivalent, the maps $f$ and $f_0$ must be $\K$-equivalent. 

Project the group
\[
G_0 < \prod_{k=1}^s GL(A^k) \times GL(B^k) \times GL(C^k) \times GL(D^k)
\]
onto
\begin{equation} \label{projectiontarget}
\prod_{k=1}^s GL(A^k) \times GL(D^k);
\end{equation}
then the resulting group lies in $\A_{f_0^1} \times \cdots \times \A_{f_0^s}$, so in particular it must lie in $\K_{f_0}$.

\subsubsection*{The projection restricts to an injection on $G_0$} 

Equivalently, the action of $G_0$ on the $A^k$ and on $D^k$ determines the action on the $B^k$ and $C^k$. The actions on $A^k$, $C^k$ and $D^k$ determines that on $B^k$ (being a germ, $F_0$ is level-preserving if the coordinates on $B^k$ are appropriately chosen), thus it is enough to show that the action on $C^k$ is determined by that on the $A^k$ and $D^k$.

For a general ministable multigerm 
\[
\eta=\bigsqcup_{k=1}^s \eta_k \colon \bigsqcup_{k=1}^s (\R^N,0) \to (\R^P,0),
\]
define $\mathscr{N}_\eta = \theta_\eta/t \eta(\bigoplus_s \theta_{(\R^N,0)}) + \eta^\ast (m(P))\theta_\eta$ \cite{ma3}. Each $\eta_k$ is $\A$-equivalent to a germ $\tilde{\eta}_k \times \id_{\R^{d_k}}$, so that $\tilde{\eta}_k \colon (\R^{n_k},0) \to (\R^{p_k},0)$ is ministable and has an isolated singularity.

We can decompose into monogerm components: 
\[
\begin{array}{l}
\theta_\eta \cong \bigoplus_{k=1}^s \theta_{\eta_k}\\
t\eta(\bigoplus \theta_{(\R^n,0)}) \cong \bigoplus_{k=1}^s t \eta_k(\theta_{(\R^n,0)})\\
\eta^\ast (m(p))\theta_\eta = \{f \circ \eta | f \colon (\R^p,0) \to (\R,0)\} \cdot \theta_\eta \cong \bigoplus_{k=1}^s \eta_k^\ast(m(p))\theta_{\eta_k}
\end{array}
\]
and see that 
\[
\begin{array}{ll}
\mathscr{N}_\eta & \cong \bigoplus_{k=1}^s \theta_{\eta_k}/\bigoplus_{k=1}^s \left( t\eta_k(\theta_{(\R^n,0)}) + \eta_k^\ast (m(p))\theta_{\eta_k} \right)\\
 & \cong \bigoplus_{k=1}^s \theta_{\eta_k}/\left( t \eta_k(\theta_{(\R^n,0)}) + \eta_k^\ast (m(p))\theta_{\eta_k} \right) = \bigoplus_{k=1}^s \mathscr{N}_{\eta_k} \cong \bigoplus_{k=1}^s \mathscr{N}_{\tilde{\eta}_k}.
 \end{array}
\]

In our situation, we write $F_0=\bigsqcup_{k=1}^s F_0^k$, where $F_0^k=\sigma_k \circ \left( \tilde{F}^k_0 \times \id_{\R^{P-p_k}} \right)$ and $\tilde{F}^k_0 \colon \R^{n_k} \to \R^{p_k}$.

In the diagram
\[
\begin{diagram}
\node{\R^P=\bigoplus_{k=1}^s \R^{p_k} = \bigoplus_{k=1}^s \left( \theta_{(\R^{p_k},0)}/m(p_k)\theta_{(\R^{p_k},0)} \right)} \arrow{s,r}{wF_0= \bigoplus wF^k_0}\\ \node{\mathscr{N}_{F_0}=\bigoplus_{k=1}^s \mathscr{N}_{F^k_0}} \node{\bigoplus_{k=1}^s \mathscr{N}_{f_0^k} = \mathscr{N}_{f_0}} \arrow{w,t}{q_{F_0,f_0}}
\end{diagram}
\]
there is a naturally defined $G_0$-action on each of the spaces, and both maps $wF_0$ and $q_{F_0,f_0}$ are $G_0$-equivariant. Hence, the action on $\mathscr{N}_{f_0}$ determines that on $\R^P$, and in particular that on the $C^k$. It follows that the projection (\ref{projectiontarget}) is injective on $G_0$.

Hence, $G_0$ can be viewed as a compact subgroup of $\K_{f_0}$. Since the germs $f$ and $f_0$ are $\K$-equivalent, the compact subgroups of $\K_f$ and $\K_{f_0}$ are conjugate in $\K$. There is thus a compact subgroup of $\K_f$ which is conjugate in $\K$, and thus isomorphic, to $G_0$, which again is isomorphic to $G$. This concludes the proof of lemma~\ref{lem8}. \qed
\end{proof}

\begin{proof}[Proof of theorem~\ref{multitomono}]
Denote by $h$ the rank $0$ multigerm $\bigsqcup_{i=1}^s h_i$. Now we merely put the pieces together:
\[
\begin{array}{lcl}
MC(\A_f)  & \stackrel{\textrm{lemma~\ref{lem8}}}{<} & MC(\K_h)\\
 & \stackrel{\textrm{p.~\pageref{unionK}}}{\cong} &  \prod_{i=1}^s MC(\K_{h_i})\\
 & \stackrel{\textrm{remark~\ref{s=1rem}}}{\cong} & \prod_{i=1}^s MC(\A_{g_i})\\
 & < & MC(\A_f),
\end{array}
\]
where the last inequality is most easily seen to hold by considering the form (\ref{standardform}) and taking each $((\psi_i, \phi_i)_{i=1}^s) \in \prod_{i=1}^s MC(\A_{g_i})$ to the element 
\[
\left(\bigsqcup_{i=1}^s \phi_1 \times \ldots \times \psi_i \times \ldots \times \phi_s, \phi_1 \times \ldots \times \phi_s\right).
\]
This concludes the proof of the theorem. \qed
\end{proof}

\subsection{A remark on the decomposition of $MC(\A_f)$ in terms of $MC(\mathscr{R}_f)$} \label{splitexactresult}

For monogerms $f$, du Plessis and Wilson~\cite{dpwil} have studied decomposition of $MC(\A_f)$ in terms of $MC(\mathscr{R}_f)$ and a subgroup of target diffeomorphisms which preserve the discriminant of $f$, that is, the set $D(f) = f(\Sigma(f))$. The monogerms studied in~\cite{dpwil} are \emph{critical normalizations}, a class of germs which contains all stable map-germs, finitely $\A$-determined germs with $p \ge 3$ and $\Sigma(f) \neq \{0\}$, and all analytic, topologically stable map-germs. We are interested in \cite[Theorem~1.5]{dpwil}, which says that if $f$ is a finitely $\A$-determined critical normalization, then a certain subgroup, $\tilde{\textrm{Inv}}(D)$, of the target diffeomorphisms which preserve the discriminant, admits a maximal compact subgroup $G$. Moreover, the natural sequence
\begin{equation} \label{splitseq}
0 \to MC(\mathscr{R}_f) \to MC(\A_f) \to G \to 0
\end{equation}
should be split exact. As a consequence, the maximal compact subgroup $MC(\A_f)$ should be the direct sum of $MC(\mathscr{R}_f)$ and $G$. However, a closer inspection revealed a problem with the proof of the monogerm version from~\cite{dpwil}. 

Our goal is to prove the following conjecture, generalizing the claim by du Plessis and Wilson to multigerms:

\begin{conj} \label{conjecture}
Let $f$ be a finitely $\A$-determined multigerm which is also a critical normalization. The group $\tilde{\textrm{\emph{Inv}}}(D(f))$ admits a maximal compact subgroup $G$, which is unique up to conjugation. Moreover, the sequence 
\begin{equation}
0 \to MC(\mathscr{R}_f) \stackrel{i}{\to} MC(\A_f) \stackrel{p}{\to} G \to 0
\end{equation}
is split exact, so that $MC(\A_f) = MC(\mathscr{R}_f) \oplus G$. \qed
\end{conj}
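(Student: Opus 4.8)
We outline a strategy for Conjecture~\ref{conjecture}. Throughout one works with a representative of the $\A$-equivalence class of $f$ chosen, via Lemma~\ref{basicobservations}~(iii) and Lemma~\ref{stronglin}, so that $MC(\A_f)$ acts linearly; since $\mathscr{R}_f$ is the kernel of the homomorphism $p\colon \A_f \to \mathscr{L}$, $(\phi_1,\dots,\phi_s,\phi_t)\mapsto \phi_t$, it is normal in $\A_f$, so after conjugating by an element of $\A_f$ we may also assume $MC(\mathscr{R}_f) \le MC(\A_f)$ with both linear. Recall that $\tilde{\mathrm{Inv}}(D(f))$ is (up to the usual identifications) the image $p(\A_f)$, the group of target diffeomorphism germs preserving $D(f)$ that lift to a source diffeomorphism fixing $f$, so $p$ restricts to a surjection $\A_f \to \tilde{\mathrm{Inv}}(D(f))$ with kernel $\mathscr{R}_f$. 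Passing to $k$-jets for $k$ large enough that $f$ is $k$-$\A$-determined turns this into an exact sequence
\[
1 \to \mathscr{R}^k_f \to \A^k_f \stackrel{p^k}{\to} \tilde{\mathrm{Inv}}(D(f))^k \to 1
\]
of real algebraic groups with finitely many components (with $\tilde{\mathrm{Inv}}(D(f))^k := p^k(\A^k_f)$); this is the level at which Iwasawa's theorem produces the maximal compact subgroups, exactly as in the proof of Theorem~\ref{existenceofmc}.

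\emph{Existence of $G$ and exactness of the $MC$-sequence.} Put $G := p\bigl(MC(\A_f)\bigr)$, a compact subgroup of $\tilde{\mathrm{Inv}}(D(f))$. To show it is maximal I would prove that every compact $H < \tilde{\mathrm{Inv}}(D(f))$ lifts to a compact subgroup $\hat H < \A_f$: the preimage $p^{-1}(H)$ is an extension of $H$ by $\mathscr{R}_f = \prod_i \mathscr{R}_{f_i}$, a set-theoretic section exists because each element of $H$ has a source lift, and the $\mathscr{R}_f$-valued ambiguity is removed by integrating $H$-invariant vector fields against the Haar measure of $H$ — a parametrized, $H$-equivariant form of the flow construction behind Proposition~\ref{extensionprop} (equivalently, of the contractibility of $\mathscr{R}_f/MC(\mathscr{R}_f)$). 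Then $\hat H$ is conjugate into $MC(\A_f)$, hence $H$ is conjugate into $G$; uniqueness of $G$ up to conjugacy follows from that of $MC(\A_f)$. For exactness, $MC(\A_f) \cap \mathscr{R}_f$ is a compact subgroup of $\mathscr{R}_f$ containing the maximal compact subgroup $MC(\mathscr{R}_f)$, hence equals it, so $\ker\bigl(p|_{MC(\A_f)}\bigr) = MC(\mathscr{R}_f)$, and with $G = p\bigl(MC(\A_f)\bigr)$ the sequence $0 \to MC(\mathscr{R}_f) \stackrel{i}{\to} MC(\A_f) \stackrel{p}{\to} G \to 0$ is exact.

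\emph{The splitting.} This is the heart of the matter and the point where the monogerm argument of~\cite{dpwil} fails. One must split the exact sequence $0 \to MC(\mathscr{R}_f) \to MC(\A_f) \to G \to 0$ of compact Lie groups by a section whose image, moreover, commutes with $MC(\mathscr{R}_f)$ — equivalently, since $(\phi_1,\dots,\phi_s,\phi_t)$ commutes with $(\phi_1',\dots,\phi_s',\mathrm{id})\in\mathscr{R}_f$ precisely when the source components commute, one needs to realize each $\psi\in G$ by a pair $(\phi^\psi_1,\dots,\phi^\psi_s,\psi)\in MC(\A_f)$ depending homomorphically on $\psi$ with all $\phi^\psi_i$ centralizing $MC(\mathscr{R}_f)$; this is exactly the content of the asserted internal direct product $MC(\A_f)=MC(\mathscr{R}_f)\oplus G$. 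The approach I propose is to construct the source components geometrically rather than through group cohomology, using the defining property of a critical normalization — that the geometry of $f$ is governed by the normalization $f|\Sigma(f)\colon \Sigma(f)\to D(f)$ of its discriminant: a germ $\psi\in\tilde{\mathrm{Inv}}(D(f))$ lifts \emph{canonically} along this normalization to $\Sigma(f)$, the critical-normalization hypothesis should let one extend this canonical lift off $\Sigma(f)$ to a source diffeomorphism $\phi^\psi$, and canonicity together with an average over $MC(\mathscr{R}_f)$ should force $\phi^\psi$ to centralize $MC(\mathscr{R}_f)$. The main obstacle is making this extension simultaneously well-defined, homomorphic in $\psi$, and compatible with membership in $\A_f$: the normalization pins $\phi^\psi$ down only over $\Sigma(f)$, and the extensions for the various $\psi\in G$ must be chosen coherently, which I expect to require a $G$-equivariant application of Proposition~\ref{extensionprop} together with a description of the $\mathscr{R}_f$-action on the fibers of $f$ near $\Sigma(f)$. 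This is where an idea beyond~\cite{dpwil} is needed, and the likely source of difficulty.

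\emph{From monogerms to multigerms.} The monogerm case — which still requires repairing~\cite{dpwil} — is the substantial part. The passage to multigerms uses that $\mathscr{R}_f=\prod_i \mathscr{R}_{f_i}$ splits off automatically, that $\tilde{\mathrm{Inv}}(D(f))$ only sees $D(f)=\bigcup_i f_i(\Sigma(f_i))$, and that the lifting and extension steps above are already phrased for multigerms via Proposition~\ref{extensionprop}; in the ministable case one can additionally cross-check the decomposition against Theorem~\ref{multitomono} and the standard form~(\ref{standardform}).
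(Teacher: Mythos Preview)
The statement is a \emph{conjecture} in the paper; the paper does not prove it. What the paper does is: (a) exhibit, via Proposition~\ref{nothold}, the precise failure of the argument in~\cite{dpwil} (linearizability of a compact subgroup of $\tilde{\mathrm{Inv}}(D(f))$ is not preserved under conjugation in $\mathscr{L}$); (b) reduce the full conjecture, including the split-exactness, to the single lifting problem ``every linear compact $H_0<\tilde{\mathrm{Inv}}(D(f_0))$ is contained in $p(\tilde G)$ for some compact $\tilde G<\A_{f_0}$''; and (c) show that this lifting can be achieved on $r$-jets for every $r$, leaving the passage from jets to germs as a further open conjecture.

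Your outline reaches the same exact sequence and the same reduction, but the step in which you claim to produce the lift is a genuine gap. You propose to promote a set-theoretic section $s\colon H\to p^{-1}(H)\subset\A_f$ to a homomorphic one by ``integrating $H$-invariant vector fields against the Haar measure of $H$'', invoking the contractibility of $\mathscr{R}_f/MC(\mathscr{R}_f)$. But contractibility in the paper's sense (a smooth extension property for maps from manifolds with boundary) does not yield vanishing of the nonabelian cocycle $s(h_1h_2)^{-1}s(h_1)s(h_2)\in\mathscr{R}_f$; Haar averaging over $H$ produces $H$-\emph{equivariant} objects, not group-homomorphic sections into an infinite-dimensional group. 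Even granting a homomorphic section, you would still need its image to be a compact subgroup of $\A_f$ in the paper's sense (conjugate in $\A$ to a linear group), which you do not address. This lifting is exactly the obstacle the paper isolates and cannot close; your sketch does not close it either.

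A secondary point: you locate ``the heart of the matter'' in the splitting, proposing a separate geometric construction via the normalization $f|\Sigma(f)$. The paper's analysis (the proposition immediately following the conjecture) instead argues that once the lifting hypothesis is granted, the split-exactness of the $MC$-sequence already follows; so in the paper's view the entire weight lies on the lifting. Your idea of canonically lifting $\psi\in G$ along the normalization and then extending off $\Sigma(f)$ is genuinely different from the paper's jet-approximation route and is potentially useful, but---as you yourself acknowledge---making the extension homomorphic in $\psi$, centralizing $MC(\mathscr{R}_f)$, and landing in $\A_f$ is not carried out.
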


In this section we shall discuss why the original proof of the monogerm version of the conjecture does not hold. Moreover, we shall see just how close to the result sought we can get with tactics similar to those in~\cite{dpwil}.

The proof of \cite[Theorem~1.5]{dpwil} rests on the assumption that linearizability is preserved under conjugation by any element of $\A$, but unfortunately, this is not true. More precisely, the problem is found in the following sentence in \cite[lines 2-4, page 270]{dpwil}: \emph{''... $l$ conjugates $\tilde{\textrm{Inv}}(D(f))$ onto $\tilde{\textrm{Inv}}(D(g))$ and conjugates compact Lie subgroups to compact Lie subgroups, preserving conjugates and preserving linearizability.''} Here, $(r, l)$ is an arbitrary element of $\A$, and \emph{linearizability} of a compact Lie subgroup $H$ of $\tilde{\textrm{Inv}}(D(f))$ means that for some $\tilde{l} \in \tilde{\textrm{Inv}}(D(f))$ with $j^1\tilde{l} = \id$, $\tilde{l} H \tilde{l}^{-1}$ is linear. However, this statement does not generally hold:

\begin{prop} \label{nothold}
For a general element $(r, l) \in \A$, the conjugate group $l H l^{-1}$ is not generally linearizable.
\end{prop}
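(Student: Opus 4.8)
The plan is to exhibit a single explicit counterexample: a finitely $\A$-determined critical normalization $f$, a compact linear subgroup $H$ of $\tilde{\textrm{Inv}}(D(f))$, and an element $(r,l)\in\A$ such that the conjugate $lHl^{-1}\subset\tilde{\textrm{Inv}}(D(g))$ (with $g=(r,l)\cdot f$) fails to be \emph{linearizable} in the sense defined above -- i.e., there is no $\tilde{l}\in\tilde{\textrm{Inv}}(D(g))$ with $j^1\tilde{l}=\id$ conjugating $lHl^{-1}$ to a linear group. First I would pick the simplest critical normalization with a nontrivial discriminant and nontrivial $MC(\mathscr{R}_f)$ -- a candidate is a Morin-type singularity such as the cusp $f(x,y)=(x,y^3+xy)$ or, better, a germ with a reflection symmetry so that $H\cong\Z/2$ acts linearly on source and target preserving $D(f)$. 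The key point is that the obstruction to linearizability of $lHl^{-1}$ is really an obstruction about the \emph{germ $g$}, not just the abstract group: $\tilde{\textrm{Inv}}(D(g))$ is a much smaller group than $\A$, so even though $lHl^{-1}$ is linearizable \emph{inside} $\A$ (Bochner's theorem, as in lemma~\ref{stronglin}), the linearizing diffeomorphism need not preserve $D(g)$ nor restrict correctly to the source.

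The steps, in order. (1) Fix $f$, $H$, and verify $H<\tilde{\textrm{Inv}}(D(f))$ acts linearly with $j^1H$ nontrivial. (2) Choose $l$ (and a compatible $r$, or observe that for the purposes of $\tilde{\textrm{Inv}}$ only $l$ matters) so that $g=(r,l)\cdot f$ has a discriminant $D(g)=l(D(f))$ whose $1$-jet at $0$ differs from that of a linearly-presented discriminant in a way detectable by the action of $lHl^{-1}$ -- concretely, arrange that $l$ bends $D(f)$ so that no linear change of coordinates commuting with $j^1(lHl^{-1})$ can straighten it back while staying in $\tilde{\textrm{Inv}}(D(g))$. (3) Argue by contradiction: if $\tilde{l}\in\tilde{\textrm{Inv}}(D(g))$ with $j^1\tilde{l}=\id$ linearized $lHl^{-1}$, then $\tilde{l}$ would have to carry $D(g)$ to a $(lHl^{-1})$-invariant germ of hypersurface agreeing with $D(g)$ to first order; comparing this with the structure of $D(g)$ (which, because $l$ was chosen generic, has higher-order terms incompatible with the linear group action) yields a contradiction. (4) Conclude that $lHl^{-1}$ is not linearizable, hence the quoted sentence from \cite{dpwil} fails, proving proposition~\ref{nothold}.

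The main obstacle I anticipate is step (2)--(3): producing an $l$ for which the failure is \emph{provable} rather than merely plausible. It is easy to believe that a generic $l$ destroys linearizability, but one must pin down an invariant -- something like the second-order part of $D(g)$ viewed as an element of a representation space of $j^1(lHl^{-1})$, modulo the image of the $\id$-tangent part of $\tilde{\textrm{Inv}}(D(g))$ -- and exhibit a specific $l$ making this invariant nonzero. A clean way to organize this is to note that linearizability of a compact group $K$ with $j^1K$ fixed is equivalent to the vanishing of an obstruction living in a nonabelian cohomology / equivariant jet space, and then simply compute that obstruction is nonzero for the chosen $g$. I would keep the example as low-dimensional and explicit as possible (likely $n=p=2$, $H=\Z/2$), so that the obstruction computation reduces to a short polynomial calculation in jets of order $\le 2$ or $3$, which I would relegate to a remark or to \cite{feragenthesis} rather than write out in full here.
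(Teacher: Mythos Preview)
Your plan would work in principle, but the paper takes a much shorter and entirely abstract route that avoids any explicit example or obstruction computation.

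The paper does not fix a particular $f$ or $H$. Instead it observes that if $\phi\in H$ is linearized by some $\psi$ with $j^1\psi=\id$ (so $\psi\phi\psi^{-1}=T_0\phi$), then any linearization $\tilde\psi$ of $l\phi l^{-1}$ with $j^1\tilde\psi=\id$ must conjugate it to $T_0(l\phi l^{-1})$. Decomposing the resulting commutative square forces the candidate $\tilde\psi=T_0l\circ\psi\circ l^{-1}$, which indeed has $j^1\tilde\psi=\id$. One then simply computes $\tilde\psi(D)=T_0l\bigl(\psi(l^{-1}D)\bigr)=T_0l(D)$, and this equals $D$ only when $D$ is already linear. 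So the single obstruction is whether $T_0l$ preserves the discriminant, and for generic $l$ it does not. No specific germ, no representation-theoretic cohomology class, no jet calculation beyond order one is needed.

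What each approach buys: the paper's argument is a two-line diagram chase and directly exposes \emph{why} the sentence quoted from \cite{dpwil} fails---the construction there implicitly uses this particular $\tilde\psi$, and that $\tilde\psi$ leaves $\tilde{\textrm{Inv}}(D)$. Your approach, by contrast, would yield a genuinely rigorous counterexample establishing that \emph{no} linearizing $\tilde l\in\tilde{\textrm{Inv}}(D(g))$ exists; you correctly flag this as the delicate point, and in fact the paper's argument, read strictly, only rules out the natural $\tilde\psi$ and not every possible one. For the paper's purpose (exhibiting the gap in \cite{dpwil}) that is enough, but if you want the stronger statement your obstruction idea is the right direction---just be aware that the authors were content with less.
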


\begin{proof}
To see this, consider the following linearizable diffeomorphism germ $\phi$:
\[
\begin{diagram}
\node{(\R^p, 0)} \arrow{e,t}{\phi} \arrow{s,l}{\psi} \node{(\R^p,0)} \arrow{s,r}{\psi}\\
\node{(\R^p,0)} \arrow{e,b}{T_0\phi} \node{(\R^p,0)}
\end{diagram}
\]
and suppose that $l \colon (\R^p, 0) \to (\R^p, 0)$ is another diffeomorphism-germ. The question now is, whether we can find a diffeomorphism-germ $\tilde{\psi} \in \tilde{\textrm{Inv}}(D(f))$ such that $T_0\tilde{\psi} = \id$ and the following diagram commutes?
\[
\begin{diagram}
\node{(\R^p, 0)} \arrow{e,t}{l\phi l^{-1}} \arrow{s,l}{\tilde{\psi}} \node{(\R^p,0)} \arrow{s,r}{\tilde{\psi}}\\
\node{(\R^p,0)} \arrow{e,b}{T_0(l \phi l^{-1})} \node{(\R^p,0)}
\end{diagram}
\]
Decomposing the diagram, we get
\[
\begin{diagram}
\node{(\R^p, 0)} \arrow{e,t}{l^{-1}} \arrow{s,l}{T_0 l \circ \psi \circ l^{-1}} \node{(\R^p, 0)} \arrow{e,t}{\phi} \arrow{s,l}{\psi} \node{(\R^p,0)} \arrow{s,r}{\psi} \arrow{e,t}{l} \node{(\R^p, 0)} \arrow{s,l}{T_0 l \circ \psi \circ l^{-1}}\\
\node{(\R^p,0)} \arrow{e,b}{T_0l^{-1}} \node{(\R^p,0)} \arrow{e,b}{T_0\phi} \node{(\R^p,0)} \arrow{e,b}{T_0l} \node{(\R^p,0)}
\end{diagram}
\]
giving $\tilde{\psi}= T_0 l \circ \psi \circ l^{-1}$. It is easy to see that $T_0\tilde{\psi} = \id$, but we also need $\tilde{\psi}$ to belong to $\tilde{\textrm{Inv}}(D(f))$. In particular, we need $\tilde{\psi}(D(f)) = D(f)$. Since $l$ and $\psi$ both belong to $\textrm{Inv}(D(f))$, we have $\psi(l^{-1}(D(f)) = D(f)$. However, $\tilde{\psi}(D(f)) = T_0 l(\psi(l^{-1}(D(f)))) = T_0l(D(f))$, which equals $D(f)$ if and only if $D(f) = T_0D(f)$, but this is not generally the case. It follows that linearizability is not preserved under conjugation. \qed
\end{proof}

As a consequence of proposition~\ref{nothold}, the proof of \cite[Theorem~1.5]{dpwil} does not hold.

The linearizability is used in~\cite{dpwil} to relate compact subgroups of $\tilde{\textrm{\textrm{Inv}}}(D(f))$ to compact subgroups of $\A_f$. Let us temporarily assume that there is an alternative way to do this. We can prove:

\begin{prop}
Let $f$ be a multigerm as in conjecture~\ref{conjecture}. Suppose that for any linear compact subgroup $H_0$ of $\tilde{\emph{Inv}}(D(f))$, we can find a compact subgroup $\tilde{G}$ of $\A_f$ such that $p(\tilde{G}) \supset H$. Then $p(MC(\A_f))$ is a maximal compact subgroup of $\tilde{\textrm{\emph{Inv}}}(D(f))$, and the sequence (\ref{splitseq}) is split exact, so that $MC(\A_f) = MC(\mathscr{R}_f) \oplus p(MC(\A_f))$.
\end{prop}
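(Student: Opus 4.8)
The plan is to analyse the projection $p\colon\A_f\to\mathscr{L}$, $(\phi_1,\ldots,\phi_s,\phi_t)\mapsto\phi_t$, whose image is by definition $\tilde{\textrm{Inv}}(D(f))$ and whose kernel is exactly $\{(\phi_1,\ldots,\phi_s,\id): f_i\circ\phi_i=f_i\ \forall i\}\cong\mathscr{R}_{f_1}\times\cdots\times\mathscr{R}_{f_s}=\mathscr{R}_f$. Thus one has a short exact sequence $1\to\mathscr{R}_f\to\A_f\stackrel{p}{\to}\tilde{\textrm{Inv}}(D(f))\to 1$, in which the map $i$ of conjecture~\ref{conjecture} is the inclusion of $\mathscr{R}_f$. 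First I would fix good representatives: replacing $f$ by a suitable element of its $\A$-orbit I may assume $MC(\A_f)$ is linear (lemma~\ref{basicobservations}~iii) together with lemma~\ref{stronglin}), and, since $MC(\mathscr{R}_f)$ is a compact subgroup of $\A_f$, I may choose the representative $MC(\A_f)$ of its conjugacy class so that $MC(\mathscr{R}_f)\subseteq MC(\A_f)$. With these choices I set $G:=p(MC(\A_f))$, a compact (indeed linear) subgroup of $\tilde{\textrm{Inv}}(D(f))$.

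Next I would verify that $0\to MC(\mathscr{R}_f)\stackrel{i}{\to}MC(\A_f)\stackrel{p}{\to}G\to 0$ is exact. Surjectivity of $p|_{MC(\A_f)}$ onto $G$ is the definition of $G$. For the kernel, $\ker(p|_{MC(\A_f)})=MC(\A_f)\cap\mathscr{R}_f$ is a compact subgroup of $\mathscr{R}_f$ containing $MC(\mathscr{R}_f)$; since $\mathscr{R}_f$ has a maximal compact subgroup (theorem~\ref{existenceofmc} with $\HH=\mathscr{R}$, or the monogerm factorization), any compact subgroup of $\mathscr{R}_f$ containing $MC(\mathscr{R}_f)$ must equal it, because a closed subgroup of a compact Lie group with the same dimension and the same number of connected components is the whole group. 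Hence $\ker(p|_{MC(\A_f)})=i(MC(\mathscr{R}_f))$ and the sequence is exact.

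Then I would show $G$ is a maximal compact subgroup of $\tilde{\textrm{Inv}}(D(f))$. Let $H<\tilde{\textrm{Inv}}(D(f))$ be compact; conjugating $H$ inside $\tilde{\textrm{Inv}}(D(f))$ we may take it linear, and the hypothesis supplies a compact $\tilde{G}<\A_f$ with $p(\tilde{G})\supseteq H$. By theorem~\ref{existenceofmc} the maximal compact subgroups of $\A_f$ are conjugate, so there is $a\in\A_f$ with $a\tilde{G}a^{-1}\subseteq MC(\A_f)$; applying $p$ gives $p(a)\,H\,p(a)^{-1}\subseteq p(a\tilde{G}a^{-1})\subseteq p(MC(\A_f))=G$ with $p(a)\in\tilde{\textrm{Inv}}(D(f))$. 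So every compact subgroup of $\tilde{\textrm{Inv}}(D(f))$ is conjugate in $\tilde{\textrm{Inv}}(D(f))$ to a subgroup of $G$, i.e. $G$ is maximal compact (and, incidentally, $\tilde{\textrm{Inv}}(D(f))$ does admit a maximal compact subgroup, unique up to conjugacy).

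Finally there is the splitting. Applying the hypothesis to $H_0=G$ and conjugating as above, I may assume $\tilde{G}\subseteq MC(\A_f)$, and then maximality of $G$ plus a dimension/component count forces $p(\tilde{G})=G$; one then checks $MC(\A_f)=MC(\mathscr{R}_f)\cdot\tilde{G}$ with $MC(\mathscr{R}_f)\cap\tilde{G}=\tilde{G}\cap\mathscr{R}_f=\ker(p|_{\tilde{G}})$. Thus the sequence splits exactly when the lift $\tilde G$ can be chosen with $p|_{\tilde G}$ injective, i.e. $\tilde{G}\cap\mathscr{R}_f=\{e\}$, in which case $s:=(p|_{\tilde{G}})^{-1}\colon G\to MC(\A_f)$ is the desired homomorphic section and $MC(\A_f)=MC(\mathscr{R}_f)\rtimes G$ (written $\oplus$ in conjecture~\ref{conjecture}). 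I expect this last point to be the main obstacle: an abstract extension $1\to\tilde{G}\cap\mathscr{R}_f\to\tilde{G}\to G\to 1$ of compact Lie groups need not split, so a faithful lift of $G$ into $\A_f$ must be built using the critical-normalization structure of $f$ — lifting a target diffeomorphism preserving $D(f)$ through the normalization $f|_{\Sigma(f)}\colon\Sigma(f)\to D(f)$ and then across the source — which is precisely the construction obstructed by the failure of linearizability under conjugation (proposition~\ref{nothold}) in the original argument of~\cite{dpwil}, and the reason the full statement remains conjectural.
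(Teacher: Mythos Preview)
Your treatment of exactness and of the maximality of $G=p(MC(\A_f))$ is essentially the paper's argument. There are, however, two points where your proposal diverges in ways that matter.

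\textbf{Where the splitting comes from.} You treat the splitting of
\[
0 \to MC(\mathscr{R}_f) \to MC(\A_f) \to G \to 0
\]
as the residual obstacle, try to derive it from the hypothesis by lifting $H_0=G$, observe that $p|_{\tilde G}$ need not be injective, and conclude that this is ``the reason the full statement remains conjectural.'' That inverts the logic of the paper. In the paper the split exactness of the $MC$-sequence is taken over directly from \cite{dpwil} (and its multigerm adaptation), \emph{independently of the hypothesis of the proposition}; the hypothesis is invoked only to establish that $p(MC(\A_f))$ is maximal compact in $\tilde{\textrm{Inv}}(D)$. What is conjectural is not the splitting but the hypothesis itself (that every linear compact $H_0<\tilde{\textrm{Inv}}(D)$ lifts to a compact subgroup of $\A_f$); once the hypothesis is granted, the proposition is a theorem. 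So your proof has a genuine gap: you have not established the splitting, and the route you attempt (via the lifting hypothesis) is not the one that works.

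\textbf{Linearization inside $\tilde{\textrm{Inv}}(D(f))$.} In your maximality argument you write ``conjugating $H$ inside $\tilde{\textrm{Inv}}(D(f))$ we may take it linear.'' But this is exactly the step that proposition~\ref{nothold} shows cannot be taken for granted: Bochner linearizes $H$ in $\mathscr{L}$, not in $\tilde{\textrm{Inv}}(D(f))$, and the linearizing conjugation need not preserve $D(f)$. The paper circumvents this by replacing $f$ with an $\A$-equivalent germ $f_0=\alpha\cdot f$ (with $\alpha=(r_1,\ldots,r_s,l)$) so that $H_0=lHl^{-1}$ is linear \emph{and} lies in $\tilde{\textrm{Inv}}(D(f_0))$; the hypothesis is then applied to $f_0$ and $H_0$, and the resulting inclusion is transported back via an element of $\A_{f_0}$ whose target component lies in $\tilde{\textrm{Inv}}(D(f_0))$. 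Your argument would go through once you make this replacement, but as written it relies on a linearization you are not entitled to.
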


\begin{proof}
The following lemma holds, even for multigerms, with a proof similar to that in \cite{dpwil}:

\begin{lem} \label{41lemma}
Let $f = \bigsqcup_s f_i \colon \bigsqcup_s (\R^n,0) \to (\R^p,0)$ be a CN. Then the sequence
\begin{equation}
1 \to \mathscr{R}_f \stackrel{i}{\to} \A_f \stackrel{p}{\to} \tilde{\textrm{\emph{Inv}}}(D) \to 1
\end{equation}
is well-defined and exact, where $i(r) = (r, \textrm{\emph{id}})$ and $p(r, l) = l$. \qed
\end{lem}

If $f$ is a finitely $\A$-determined multigerm, we know by theorem~\ref{existenceofmc} that $MC(\A_f)$ exists and is unique up to conjugation in $\A_f$. Just as in \cite{dpwil}, also $MC(\mathscr{R}_f)$ exists and is unique up to conjugation in $\mathscr{R}_f$. Moreover, the sequence
\[
1 \to MC(\mathscr{R}_f) \stackrel{i}{\to} MC(\A_f) \stackrel{p}{\to} p(MC(\A_f)) \to 1
\]
is split exact, where $p \colon \A \to \mathscr{L}$. It is clear that $p(MC(\A_f))$ is a compact subgroup of $\textrm{Inv}(D)$; what we would like to prove is that $p(MC(\A_f))$ is a maximal compact subgroup of $\tilde{\textrm{Inv}}(D)$, unique up to conjugation.

Let $G$ be a maximal compact subgroup of $\A_f$, and note that if we can prove the result with $f$ replaced by an $\A$-equivalent germ $f_0$, then the result holds also for $f$.

Pick a compact subgroup $H$ of $\tilde{\textrm{Inv}}(D)$. We would like to show that $p(G)$ is a maximal compact subgroup of $\tilde{\textrm{Inv}}(D)$ by showing that $H$ is conjugate in $\tilde{\textrm{Inv}}(D)$ to a subgroup of $p(G)$. Choose a germ $f_0 = \alpha \cdot f$ which is $\A$-equivalent to $f$, where $\alpha = (r_1, \ldots, r_s,l) \in \A$, such that $H_0 = lHl^{-1}$ is linear. By abuse of notation, we identify $H_0$ with $j^kH_0$ for any $k$.

If we can find a maximal compact subgroup $\tilde{G}$ of $\A_{f_0}$ such that $H_0 \subset p(\tilde{G})$, then we can prove the result as follows: Since $\tilde{G}$ and $G$ are maximal compact subgroups of $\A_{f_0}$ and $\A_f$, respectively, there exists some $g = (\tilde{r}_1, \ldots, \tilde{r}_s, \tilde{l}) \in \A_{f_0}$ such that $g \tilde{G} g^{-1} = G$, and $p(G) = p(g\tilde{G}g^{-1}) = \tilde{l}p(\tilde{G}) \tilde{l}^{-1}$. Moreover, $\tilde{l} \in \tilde{\textrm{Inv}}(D(f_0))$ since $g \in \A_{f_0}$, and $p(G) = \tilde{l} p(\tilde{G}) \tilde{l}^{-1}$, so $H_0$ is conjugate to a subgroup of $p(G)$ via $\tilde{l}$. But then $H$ is conjugate to a subgroup of $p(G)$ via $\tilde{l}l$. \qed
\end{proof}

We are thus left with the problem of lifting the group $H_0$ to a compact group $\tilde{G} < \A_{f_0}$. Let us attack the problem on the jet level. 

\begin{prop}
Let $f$ be as in conjecture~\ref{conjecture}, and let $H_0$ be a compact, linear subgroup of $\tilde{\textrm{Inv}}(D(f_0))$. For any $r \in \N_0$, we can find a compact group $\tilde{G} < \A_{f_0}$ such that $j^rp \tilde{G} = H_0$.
\end{prop}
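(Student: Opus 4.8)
The plan is to realize $\tilde G$ as the image of a section of the $r$-jet projection $j^r p\colon \A^r_{f_0}\to j^r\tilde{\mathrm{Inv}}(D(f_0))$ restricted over the compact linear group $H_0$, lifted back from the jet level to genuine diffeomorphism germs. First I would work entirely on the level of $r$-jets. Set $\A^r_{f_0}$ to be the algebraic group of $r$-jets of elements of $\A_{f_0}$, and let $p^r\colon \A^r_{f_0}\to \mathscr L^r$ be the target projection, so $j^r p$ factors as $p^r\circ j^r$. By lemma~\ref{41lemma} the image $p^r(\A^r_{f_0})$ contains $j^r\tilde{\mathrm{Inv}}(D(f_0))$, and the kernel of $p^r|_{\A^r_{f_0}}$ is $\mathscr R^r_{f_0}$, which is the $r$-jet group of $\mathscr R_{f_0}$; this kernel is a real algebraic group, hence has a well-defined unipotent radical, and by Iwasawa's theorem its reductive part is essentially $MC(\mathscr R^r_{f_0})$. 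The point is that $H_0$, being compact, can be lifted through the algebraic quotient $\A^r_{f_0}\to p^r(\A^r_{f_0})$: the fibre is $\mathscr R^r_{f_0}$, the exact sequence $1\to\mathscr R^r_{f_0}\to \A^r_{f_0}\to p^r(\A^r_{f_0})\to 1$ is an extension of linear algebraic groups, and over the compact subgroup $H_0$ one can choose a compact lift — the unipotent part of $\mathscr R^r_{f_0}$ contributes nothing to obstructions (a compact group has no nontrivial homomorphisms into a unipotent group, and $H^1$ with unipotent coefficients vanishes for compact groups by averaging), while the reductive part can be handled by standard conjugacy of maximal compact subgroups of algebraic groups. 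Concretely: pick a maximal compact subgroup $K$ of $\A^r_{f_0}$ whose image under $p^r$ contains $H_0$ up to conjugacy in $p^r(\A^r_{f_0})$ — this is possible because maximal compact subgroups surject onto maximal compact subgroups of algebraic quotients — then $K$ contains a conjugate of $H_0$ lifted; call a compact subgroup $K_0<\A^r_{f_0}$ with $p^r(K_0)=H_0$ the output of this jet-level step.

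Second I would pass from $K_0<\A^r_{f_0}$ back up to a compact subgroup $\tilde G<\A_{f_0}$ with $j^r\tilde G=K_0$, exactly as in the proof of theorem~\ref{existenceofmc}. Since $f_0$ is finitely $\A$-determined (being $\A$-equivalent to $f$), pick $k\ge r$ with $f_0$ being $k$-$\A$-determined, and replace $f_0$ by its polynomial representative as in lemma~\ref{basicobservations}~\textit{iii)} so that we may assume $f_0$ is polynomial of degree $\le k$. Linearize $K_0$ inside $\A^r$ — but we need compatibility with the already-linear $H_0$ on the target: use lemma~\ref{stronglin}~\textit{ii)}, which lets us linearize while keeping the $r$-jet of the conjugating element trivial when the $r$-jet of the group is already linear, so the linearization of $K_0$ does not disturb $H_0=p^r(K_0)$. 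Now $K_0$ is a compact linear subgroup leaving $j^kf_0$ invariant (after enlarging and using equivariant determinacy, lemma~\ref{equivdeterminacy}), so by the same flow-and-average argument as in theorem~\ref{existenceofmc} there is $\psi\in\A$ with $j^r\psi=\mathrm{id}$ realizing $f_0$ as $K_0$-invariant; then $\tilde G:=K_0$ viewed inside $\A$ — more precisely its conjugate by $\psi$ — is a compact subgroup of $\A_{f_0}$ with $j^r\tilde G=K_0$, whence $j^r p\,\tilde G = p^r(j^r\tilde G)=p^r(K_0)=H_0$.

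The main obstacle I expect is the first step: ensuring that the lift $K_0$ of the compact group $H_0$ through the algebraic extension $1\to\mathscr R^r_{f_0}\to \A^r_{f_0}\to p^r(\A^r_{f_0})\to 1$ can be taken \emph{compact}, and that its image is exactly $H_0$ rather than merely a conjugate. The vanishing of the relevant cohomology (so that the extension splits over $H_0$ after passing to a conjugate) is classical for compact groups mapping to linear algebraic groups — one averages a set-theoretic section with respect to Haar measure and uses that the unipotent radical of the kernel is a vector group on which $H_0$ acts linearly — but one must be careful that $\mathscr R^r_{f_0}$ need not be connected and that the reductive part of the kernel interacts with $H_0$ through the conjugation action defining the extension; here one invokes Mostow's theorem on the structure of linear algebraic groups (Levi decomposition) together with conjugacy of maximal compact subgroups to arrange $p^r(K_0)=H_0$ on the nose. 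A secondary technical point is bookkeeping across the multigerm components: $\mathscr R_{f_0}=\mathscr R_{f_0^1}\times\cdots\times\mathscr R_{f_0^s}$ splits as a product (as recalled on p.~\pageref{unionK}), so the kernel $\mathscr R^r_{f_0}$ is a product and the averaging can be carried out component-wise, which keeps the argument manageable.
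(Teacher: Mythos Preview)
Your two-step plan is the right shape, and step~1 is essentially what the paper does (the paper simply cites \cite[Corollary~4.4]{dpwil}: a surjective homomorphism of real algebraic groups sends compact subgroups onto compact subgroups, so $H_0$ lifts to a compact $G_0$ in the jet group). The difficulty is that you carry out step~1 at level $r$, and this is too low for step~2 to go through.

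The gap is in your passage from $K_0<\A^r_{f_0}$ to $\tilde G<\A_{f_0}$. A compact linear $K_0$ preserving $j^rf_0$ does \emph{not} preserve $j^kf_0$ for $k>r$, so you cannot view $K_0$ as a subgroup of $\A_{f'}$ for the degree-$k$ polynomial representative $f'$, and lemma~\ref{equivdeterminacy} (equivariant $k$-determinacy) does not apply --- it needs agreement to order $k$, not $r$. Your appeal to ``the same flow-and-average argument as in theorem~\ref{existenceofmc}'' therefore does not produce a $\psi$ with $j^r\psi=\id$; that argument only controls the $k$-jet of the conjugating element (it lands in $\A^k_{f_0}$), not its $r$-jet. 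Lemma~\ref{stronglin}~\textit{ii)} is stated for $G<\A$, not $G<\A^r$, so it does not directly help either.

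The paper's fix is to run step~1 at level $k$ rather than $r$: lift $H_0$ to a compact $G_0<\A^k_{f_0}$ with $p^k(G_0)=H_0$, linearize $G_0$ by changing \emph{source} coordinates only (so $p^k(G_0)=H_0$ is preserved), obtain $G_0<\A_{f'}$ for the polynomial $f'$, and then conjugate back via $\beta\in\A$ with $f_0=\beta\cdot f'$. The crucial extra ingredient you are missing is Mather's refinement \cite[Addendum to~3.5]{ma3}: $f_0$ is in fact $k$-$\A_r$-determined for any fixed $r$ once $k$ is large enough, so $\beta=(\phi_1,\dots,\phi_s,\psi)$ may be chosen with $j^r\psi=\id$. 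This is exactly what forces $j^rp(\tilde G)=j^r(\psi H_0\psi^{-1})=H_0$ for $\tilde G=\beta G_0\beta^{-1}$. Without invoking $\A_r$-determinacy (or an equivalent device controlling the $r$-jet of the target component of $\beta$), the conclusion $j^rp\,\tilde G=H_0$ --- as opposed to merely $j^rp\,\tilde G$ being conjugate to $H_0$ --- does not follow.
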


\begin{proof}
Since $H_0$ acts linearly, we have $H_0 = j^k H_0$ for any $k \ge 1$, and by the arguments above, $H_0 \subset \im (j^k p)$, where $p \colon \A_f \to \textrm{L}$. By \cite[Corollary~4.4]{dpwil}, if  $h \colon H \to H'$ is a surjective homomorphism of real algebraic groups, then every compact subgroup of $H'$ is the image under $h$ of a compact subgroup of $H$. Hence, there exists a compact subgroup $G_0$ of $\A^k_{j^kf_0}$ such that $j^kp(G_0) = H_0$. 

We would like to find a corresponding subgroup on the map level, namely a subgroup $\tilde{G}_0$ of $\A_{f_0}$ with $j^k \tilde{G}_0 = G_0$ and $p(\tilde{G}_0) \supset H_0$. By Bochner's linearization theorem, we may (by changing source coordinates) assume that $G_0$ acts linearly, while we still have $j^kp(G_0) = H_0$. Let $f'$ denote the polynomial representative of $j^kf$. Since $G_0$ acts linearly, we must also have $G_0 < \A_{f'}$. Since $f'$ has the same $k$-jet as $f_0$, and these maps are finitely $\A$-determined, we have that for sufficiently large $k$, there exists $\beta = (\phi_1, \ldots, \phi_s, \psi) \in \A$ such that $f_0 = \beta \cdot f'$. Since $G_0 < \A_{f'}$, we have $\beta G_0 \beta^{-1} < \A_{f_0}$. We had $H_0 < \tilde{\textrm{Inv}}(D(f_0))$, and now we must also have $\psi H_0 \psi^{-1} < \tilde{\textrm{Inv}}(D(f))$.

As pointed out in \cite[Addendum to 3.5]{ma3}, $f_0$ is actually $k-\A_r$-determined for any $r$ with $k$ sufficiently large. Hence, we may assume that $j^r \psi = \id$ for any $r$, and for sufficiently large $k$ (depending on $r$). Thus there exist, for any $r \in \N$, a $k \gg 0$ and a $\beta = (\phi_1, \ldots, \phi_s, \psi)$ such that the compact group $\tilde{G} = \beta G_0 \beta^{-1} < \A_{f_0}$ satisfies $j^kp(\tilde{G}) = j^k(\psi H_0 \psi^{-1}) = H_0$. We may assume $r \le k$, so $j^rp(\tilde{G}) H_0$, and we have $j^rp(\tilde{G}) = j^r\psi H_0 (j^r\psi)^{-1} < \tilde{\textrm{Inv}}(D(f_0))$. \qed
\end{proof}

This shows that on the jet level, we can get arbitrarily close to the lifting of $H_0$. Unfortunately, this is not immediately enough to get a lifting of $H_0$ to $\A_{f_0}$, but leaves us with the following conjecture, which will be considered further in a forthcoming paper:

\begin{conj}
There exists a compact subgroup $\tilde{G}$ of $\A_f$ such that $p(\tilde{G}) \supset H_0$. \qed
\end{conj}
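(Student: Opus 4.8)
Our plan for attacking this conjecture is to reduce it to a Bochner-type linearization carried out \emph{inside} the discriminant-preserving group $\tilde{\textrm{Inv}}(D(f_0))$, and then to lift along the surjection $p \colon \A_{f_0} \to \tilde{\textrm{Inv}}(D(f_0))$ of lemma~\ref{41lemma}. Working with $f_0$ as in the reduction above, fix $r \geq 1$ and invoke the preceding proposition to produce a compact subgroup $\tilde{G}_r < \A_{f_0}$ with $j^r p(\tilde{G}_r) = H_0$; spelling out its construction, $p(\tilde{G}_r) = \psi H_0 \psi^{-1}$ for some $\psi \in \mathscr{L}$ with $j^r \psi = \id$, and $\psi H_0 \psi^{-1} < \tilde{\textrm{Inv}}(D(f_0))$. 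Suppose we could find $\chi \in \tilde{\textrm{Inv}}(D(f_0))$ with $j^r \chi = \id$ and $\chi \left( \psi H_0 \psi^{-1} \right) \chi^{-1} = H_0$. Choosing a preimage $\hat{\chi} = (\rho_1, \ldots, \rho_s, \chi) \in \A_{f_0}$ of $\chi$ under $p$, the group $\hat{\chi}\, \tilde{G}_r\, \hat{\chi}^{-1} < \A_{f_0}$ is then compact with $p\!\left( \hat{\chi}\, \tilde{G}_r\, \hat{\chi}^{-1} \right) = \chi\left( \psi H_0 \psi^{-1}\right)\chi^{-1} = H_0$, which proves the conjecture. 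So the whole matter reduces to: linearize the compact subgroup $\psi H_0 \psi^{-1}$ of $\tilde{\textrm{Inv}}(D(f_0))$ by a discriminant-preserving germ whose $r$-jet is the identity, given that the $r$-jet of $\psi H_0 \psi^{-1}$ is already the linear group $H_0$.

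The obvious candidate is the averaging operator $\chi(y) = \int_{g \in \psi H_0 \psi^{-1}} (T_0 g)^{-1}\!\left( g(y)\right) d\mu(g)$, with $\mu$ the Haar probability measure. A routine computation using bi-invariance of $\mu$ gives $\chi \circ g = (T_0 g) \circ \chi$ for all $g \in \psi H_0 \psi^{-1}$, so $\chi$ conjugates $\psi H_0 \psi^{-1}$ onto its linear part, which equals $H_0$ because $T_0\psi = \id$; and since $j^r(\psi H_0\psi^{-1}) = H_0$ is already linear, $(T_0 g)^{-1}(g(y)) - y$ vanishes to order $r+1$, whence $j^r \chi = \id$. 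The difficulty, which we expect to be the crux, is the requirement $\chi \in \tilde{\textrm{Inv}}(D(f_0))$: for $y \in D(f_0)$ each integrand $(T_0 g)^{-1}(g(y))$ again lies in $D(f_0)$, since $g$ preserves $D(f_0)$ and $T_0 g \in H_0 < \tilde{\textrm{Inv}}(D(f_0))$, but $D(f_0)$ is a germ of a singular hypersurface and is not convex, so its $\mu$-barycentre need not lie on $D(f_0)$. This is precisely the phenomenon underlying proposition~\ref{nothold}: if $D(f_0)$ were a cone, i.e.~$T_0 D(f_0) = D(f_0)$, the obstruction would vanish and the conjecture would follow for such $f_0$, but critical normalizations need not have quasi-homogeneous discriminants.

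To get past this we would replace the naive average by a discriminant-respecting one. One possibility is to connect each $g$ to its linear part by a Moser-type isotopy \emph{through} germs preserving $D(f_0)$, generated by a vector field tangent to $D(f_0)$ obtained by averaging over $\psi H_0 \psi^{-1}$, and to exploit the logarithmic geometry of the discriminant of a critical normalization (logarithmic/liftable vector fields along $D(f_0)$ in the sense of Bruce--Roberts) to guarantee that the averaged tangent field integrates to an element of $\tilde{\textrm{Inv}}(D(f_0))$ with trivial $r$-jet. An alternative, more homotopy-theoretic route is to use the exact sequence $1 \to \mathscr{R}_{f_0} \to p^{-1}(H_0) \to H_0 \to 1$ of lemma~\ref{41lemma}: since $\mathscr{R}_{f_0}$ deformation retracts onto $MC(\mathscr{R}_{f_0})$ by J\"anich, an $H_0$-equivariant refinement of that retraction would collapse this extension to a genuine compact Lie group extension $1 \to MC(\mathscr{R}_{f_0}) \to K \to H_0 \to 1$ with $K < \A_{f_0}$ surjecting onto $H_0$; establishing the needed equivariance amounts to an equivariant version of J\"anich's contractibility theorem, which is not available by the present methods. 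In either approach the essential obstacle is the same — controlling the discriminant $D(f_0)$, through its logarithmic structure or through an equivariant contractibility statement — and resolving it is the subject of the forthcoming paper.
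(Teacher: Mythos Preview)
The statement you are attempting to prove is explicitly labelled a \emph{conjecture} in the paper, and the paper offers no proof of it; the surrounding text says only that the question ``will be considered further in a forthcoming paper'' and that the truth of this conjecture would imply conjecture~\ref{conjecture}. There is therefore no proof in the paper to compare your attempt against.

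Your proposal is itself not a proof, and you are candid about this. You correctly set up the reduction: conjugate the compact group $\tilde{G}_r$ by a lift of some $\chi \in \tilde{\textrm{Inv}}(D(f_0))$ with $j^r\chi = \id$ that linearizes $\psi H_0 \psi^{-1}$ back to $H_0$. You also correctly identify the obstruction: the Bochner average $\chi$ does what is needed on the jet level, but there is no reason for it to preserve the singular hypersurface $D(f_0)$, since averaging over a non-convex set need not stay in that set. This is exactly the failure mode the paper isolates in proposition~\ref{nothold}, and your diagnosis matches theirs. The two alternative routes you sketch --- a Moser-type isotopy using logarithmic vector fields tangent to $D(f_0)$, or an equivariant version of J\"anich's contractibility argument applied to the extension $1 \to \mathscr{R}_{f_0} \to p^{-1}(H_0) \to H_0 \to 1$ --- are both plausible programmes, but you acknowledge that neither is carried through here.

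In short: there is no genuine gap to name beyond the one you have already named, because what you have written is an honest outline of an incomplete argument rather than a claimed proof, and the paper makes no stronger claim than you do.
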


If this conjecure holds, then by the argument above, so does conjecture~\ref{conjecture}.

\subsection{Maximal compact subgroups are often small} \label{oftensmall}

Recall from lemma~\ref{basicobservations} that $\A_f$ is a subgroup of $\K_f$, so the size of $\K_f$ is an upper bound for the size of $\A_f$. In this section we prove that $\K_f$ is very small for finitely determined rank $0$ germs, making it easy to compute. Since we have shown that maximal compact subgroups of $\K_f$ for multigerms $f$ can be decomposed as a product of maximal compact subgroups for the corresponding monogerms, the monogerm results carry directly over to multigerms.

\begin{thm} \label{maxcomp}
Let $f \colon (\R^n,0) \to (\R^p,0)$ be finitely $\K$-determined, with $p<n$ and $T_0f \equiv 0$. If $p > 1$, or if $p=1$ and $j^2 f = 0$, then $MC(\K_f)$ is $\le 1$-dimensional, and if $p=1$ then it is $0$-dimensional. 
\end{thm}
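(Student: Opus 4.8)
The plan is to reduce the statement to a computation about the linear group $MC(\K_f)$, which by Bochner's theorem (used throughout the paper) we may assume acts linearly on $(\R^n,0)\times(\R^p,0)$, fixing the graph of $f$. First I would use Lemma~\ref{basicobservations}\textit{iii)} to replace $f$ by a polynomial representative of degree $\le k$ and $G=MC(\K_f)$ by a linear subgroup of $\K_f\cap(GL_n\times GL_p)$; since $T_0f\equiv 0$, such a linear $H\in\K$ decomposes as $(A,B)\in GL_n\times GL_p$ (no shearing term, because the $1$-jet of $f$ vanishes), and $H\cdot f=f$ means $B\circ f=f\circ A$. So $MC(\K_f)$ embeds in the group of pairs $(A,B)\in GL_n\times GL_p$ with $B\circ f = f\circ A$.

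**Key steps.** The heart of the argument is to bound the dimension of this linear symmetry group. I would look at the lowest-degree nonzero homogeneous part $f_d$ of $f$ (here $d\ge 2$, and $d\ge 3$ in the case $p=1$): the relation $B\circ f_d = f_d\circ A$ must hold on the leading terms. Since $f$ is finitely $\K$-determined with $T_0f\equiv 0$, the ideal generated by the components of $f$ (equivalently of $f_d$ together with higher terms) has finite colength, so the components of $f$ have no common zero other than the origin in a neighborhood — i.e., $f^{-1}(0)=\{0\}$ as a germ. This forces $f_d^{-1}(0)=\{0\}$ when $d$ is the order, and in particular the $p$ homogeneous polynomials comprising $f_d$ form a regular-ish system. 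The action $(A,B)$ on $f_d$ is then highly constrained: $A$ must preserve the zero set $\{f_d=0\}=\{0\}$, which gives nothing directly, but the more useful fact is that $B$ is determined by $A$ up to the symmetries of $f_d$, and a dimension count on the space of homogeneous maps of degree $d$ from $\R^n$ to $\R^p$ together with the constraint $B f_d = f_d A$ shows the symmetry group is small. For $p>1$ (or $p=1$, $d\ge 3$), I would argue that the connected component of the identity of this group, if positive-dimensional, would yield a one-parameter family of linear symmetries $(A_t,B_t)$ of $f_d$; differentiating $B_tf_d = f_d A_t$ at $t=0$ gives a linear vector field $v$ on $\R^n$ and $w$ on $\R^p$ with $w\circ f_d = df_d\cdot v$ — an infinitesimal symmetry. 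The claim "$\le 1$-dimensional" then amounts to showing the space of such infinitesimal symmetries is at most one-dimensional, which I expect comes from the fact that the Euler vector field (scaling) always gives one, and finite $\K$-determinacy (in particular, $f_d$ having an isolated zero and the Jacobian ideal being large enough) rules out any further independent one when $p>1$. When $p=1$ and $j^2f=0$ we have $d\ge 3$; when $p=1$ the scaling symmetry of $f_d$ acts as $t\mapsto t^d$ on the target, and since we quotient by the target $GL_1$-action the remaining group is finite, hence $0$-dimensional.

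**Main obstacle.** The hard part will be making the "infinitesimal symmetries are at most one-dimensional" step precise and uniform across the cases. The Euler field always contributes, but I need to exclude any genuinely extra continuous symmetry of the leading form $f_d$; the natural tool is that finite $\K$-determinacy of $f$ forces the Tjurina-type algebra (or at least $\mathscr{E}(n,p)/(tf(\theta_n)+f^\ast m(p)\theta_f)$) to be finite-dimensional, and a positive-dimensional symmetry group would produce, via its orbit through $f$, a positive-dimensional family of $\K$-equivalent germs lying in a fixed jet space, which one must show is impossible beyond the one dimension coming from scaling. I would also need to handle carefully the $p=1$ versus $p>1$ split and the hypothesis $j^2f=0$: for $p=1$, $p=1$ without $j^2f=0$ the quadratic form $f_2$ (a nondegenerate quadratic form in $n$ variables, forced to be nondegenerate by finite determinacy) has an $O(n)$-symmetry group, which is why that case is excluded. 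Verifying that $f_2$ must indeed be nondegenerate when $p=1,n>1$ and $f$ is finitely $\K$-determined, and that for $d\ge 3$ (or $p>1$) no such large symmetry survives, is where the real work lies; I would lean on the classification-free argument that a positive-dimensional subgroup of the linear symmetry group of $f_d$ would, together with finite determinacy, contradict the finite-dimensionality of the relevant $\K$-tangent space quotient.
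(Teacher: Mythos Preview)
Your approach has a concrete error and a structural gap. The claim that finite $\K$-determinacy with $T_0f\equiv 0$ forces $f^{-1}(0)=\{0\}$ is false when $p<n$: for instance $f(x,y)=x^3+y^3$ from $(\R^2,0)$ to $(\R,0)$ is finitely $\K$-determined with $j^2f=0$, yet $f^{-1}(0)$ is the line $x=-y$. Likewise your parenthetical that $f_2$ is forced to be nondegenerate by finite determinacy fails already for $A_2$: $f(x,y)=x^2+y^3$. So the leading-term reduction you propose does not have the properties you need, and the ``infinitesimal symmetries are at most one-dimensional'' step---which you correctly flag as the hard part---is not something that follows from the routine observations you list; it is essentially the content of the theorem itself.

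The paper takes a completely different route: it does \emph{not} attempt a direct bound on linear symmetries of $f$ or its leading part. For $p=1$ it cites Slodowy's theorem that $MC(\mathscr{R}_f)$ is finite when $j^2f=0$, then upgrades from $\mathscr{R}_f$ to $\K_f$ by the elementary observation that a compact subgroup of $GL_1=\R^\ast$ lies in $\{\pm 1\}$, together with a short connectedness argument ruling out a positive-dimensional piece on which $f\mapsto -f$. For $p>1$ it linearises $G=MC(\K_f)$ and replaces $f$ by a polynomial (your Lemma~\ref{basicobservations}\textit{iii)} step is right), then complexifies: one takes the Zariski closure $G_\C$ of $G$ in $GL(n,\C)\times GL(p,\C)$, uses Schwarz's results to see $G_\C$ is reductive with $\mathfrak g_\C=\mathfrak g+i\mathfrak g$, checks by Zariski-density that $G_\C<\A_{f_\C}$, and then invokes Wall's complex theorem $\dim_\C G_{f_\C}\le 1$ to conclude $\dim_\R G\le 1$. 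In short, the dimension bound is imported from Slodowy and Wall rather than proved from scratch; your outline would amount to reproving those results, and the specific mechanism you propose for doing so is not correct as stated.
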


For $p=1$, this is related to a theorem by P.~Slodowy:

\begin{thm} \emph{\cite[Satz~p.~169]{slodowy}} \label{slodowy}
Let $f \colon (\R^n, 0) \to (\R,0)$  be a germ such that $j^2 f = 0$ and $f$ is finitely $\mathscr{R}$-determined. If a compact group $G$ acts faithfully and linearly on $\R^n$, leaving $f$ invariant, then $G$ is zero-dimensional. \qed
\end{thm}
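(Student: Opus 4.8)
The plan is to prove the contrapositive at the level of Lie algebras: I will show that if $f$ is finitely $\mathscr{R}$-determined with $j^2f=0$, then no nonzero linear vector field preserving $f$ can arise from a compact symmetry, which forces the Lie algebra of $G$ to vanish. Since $G$ is a compact, faithfully represented subgroup of $GL_n$, it is a compact Lie group, so a trivial Lie algebra means $\dim G=0$, i.e.\ $G$ is finite, as claimed. So suppose $\dim G\ge 1$. After averaging an inner product over $G$ I may assume $G\subset O(n)$, and then there is a nonzero element of the Lie algebra, that is, a nonzero skew-symmetric matrix $A$. Differentiating the invariance $f\circ\exp(tA)=f$ at $t=0$ gives the infinitesimal relation $\xi f=0$, where $\xi=\sum_{i,j}a_{ij}x_j\,\partial/\partial x_i$ is the linear vector field generated by $A$.

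The key idea is to read $\xi f=0$ as a \emph{syzygy with linear coefficients} on the partial derivatives of $f$. Complexify and diagonalize $A$ (a real skew-symmetric matrix is normal, hence diagonalizable over $\C$ with purely imaginary eigenvalues): in suitable complex linear coordinates $z_1,\dots,z_n$ one has $\xi=\sum_k \mu_k z_k\,\partial/\partial z_k$ with the $\mu_k$ not all zero, since $A\neq 0$. Writing $g_k:=\partial f/\partial z_k$ for the generators of the Jacobian ideal $J_f=(g_1,\dots,g_n)$, the relation $\xi f=0$ reads
\[
\sum_{k=1}^n \mu_k z_k\, g_k = 0 ,
\]
a nontrivial syzygy on $(g_1,\dots,g_n)$ whose coefficients $h_k=\mu_k z_k$ are linear forms, at least one of them nonzero.

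Next I would bring in finite determinacy. Finite $\mathscr{R}$-determinacy gives $m^k\subset J_f$ for some $k$ (where $m$ denotes the maximal ideal), so in the formal power series ring $\C[[z]]$ the ideal $J_f$ is $m$-primary, i.e.\ the complexified singularity is isolated. In the regular, hence Cohen--Macaulay, local ring $\C[[z]]$ this means the $n$ elements $g_1,\dots,g_n$ form a system of parameters, hence a regular sequence, so the Koszul complex on them is exact. Exactness in the first degree says that every syzygy of $(g_1,\dots,g_n)$ is a combination of the Koszul syzygies $g_l e_j-g_j e_l$; in particular each coefficient $h_k$ of our syzygy must itself lie in $J_f$. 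Thus $\mu_k z_k\in J_f$ for every $k$.

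The contradiction now comes precisely from the hypothesis $j^2f=0$, which is what makes the theorem (and Slodowy's bound) fail for Morse-type invariants such as a rotation-invariant $x_1^2+\dots+x_n^2$. Indeed $j^2f=0$ says $f\in m^3$, so each $g_k=\partial f/\partial z_k\in m^2$ and therefore $J_f\subset m^2$. Combined with the previous step this gives $\mu_k z_k\in m^2$ for all $k$; but a nonzero linear form cannot lie in $m^2$, forcing $\mu_k=0$ for every $k$, contrary to $A\neq 0$. Hence $\dim G=0$ and $G$ is finite. The main obstacle, and the only nonformal point, is the regular-sequence/Koszul step: this is exactly where isolatedness (finite determinacy) is converted into the statement that the syzygy coefficients must themselves lie in $J_f$. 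The two hypotheses then play complementary roles, the \emph{linearity} of the symmetry producing degree-one syzygy coefficients and $j^2f=0$ confining $J_f$ to $m^2$, and it is their incompatibility that rules out a positive-dimensional $G$.
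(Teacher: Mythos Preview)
The paper does not give its own proof of this statement: it is quoted verbatim from Slodowy's paper with the \verb|\qed| marker and the citation \cite[Satz~p.~169]{slodowy}, and is used as a black box in the subsequent argument. So there is no proof in the paper to compare against.

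Your argument is correct and is in fact essentially Slodowy's original proof. The logical chain---pass to the Lie algebra, obtain a linear vector field $\xi$ annihilating $f$, interpret $\xi f=0$ as a linear-coefficient syzygy on the partials, use finite determinacy to make the partials a regular sequence in $\C[[z]]$ so that the Koszul complex is acyclic and hence every syzygy coefficient lies in $J_f$, and then invoke $j^2f=0$ to force $J_f\subset m^2$---is exactly the mechanism Slodowy uses. Two small points worth making explicit if you write this up: (i) the complexification step is harmless because finite $\mathscr{R}$-determinacy gives $m^k\subset m J_f$ already at the level of real formal power series, and tensoring with $\C$ preserves this inclusion; (ii) the linear change of coordinates that diagonalizes $A$ does not disturb either the $m$-primary property of $J_f$ or the condition $f\in m^3$, both being invariant under $GL_n(\C)$.
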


\begin{rem}
By \cite[Theorem~4.6.1]{wallfindet}, any $\K$-finitely determined function germ is $\mathscr{R}$-finitely determined.
\end{rem}

\begin{cor}
Let $f$ be as in theorem~\ref{maxcomp} with $p=1$, and let $G < \mathscr{R}_f$ be a compact subgroup. Then $G$ is zero-dimensional. \qed
\end{cor}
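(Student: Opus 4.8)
The plan is to give the short proof first. For a monogerm $f$ the assignment $\phi \mapsto (\phi,\id)$ embeds $\mathscr{R}_f$ into $\A_f$, and by lemma~\ref{basicobservations}~i) this sits inside $\K_f$; so the compact group $G < \mathscr{R}_f$ is, in particular, a compact subgroup of $\K_f$. Since $f$ is finitely $\K$-determined, theorem~\ref{existenceofmc} (applied with $\HH = \K$) provides $MC(\K_f)$, and $G$ is conjugate in $\K_f$ to a subgroup of $MC(\K_f)$. The hypotheses $p = 1 < n$, $T_0 f \equiv 0$ and $j^2 f = 0$ are precisely those of theorem~\ref{maxcomp} in the case $p = 1$, so $MC(\K_f)$ is $0$-dimensional; hence $G$ is $0$-dimensional.

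I would also record the route that explains the placement of the corollary directly after theorem~\ref{slodowy}, namely that it is the ``non-linear'' upgrade of Slodowy's statement. Since $f$ is finitely $\K$-determined and $p = 1$, the remark following theorem~\ref{slodowy} (citing \cite[Theorem~4.6.1]{wallfindet}) gives that $f$ is finitely $\mathscr{R}$-determined. By Bochner's theorem (as used in the proof of lemma~\ref{stronglin}) choose $\phi \in \mathscr{R}$ with $G_0 := \phi G \phi^{-1} < GL_n$, and set $f_0 = \phi \cdot f$, so that $G_0 < \mathscr{R}_{f_0}$. Then $f_0$ is again finitely $\mathscr{R}$-determined, and $j^2 f_0 = 0$ because every diffeomorphism germ preserves the ideal $m(n)^3$. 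The linear action of $G_0$ on $\R^n$ is faithful, since by lemma~\ref{basicobservations}~iv) the $1$-jet map is injective on compact subgroups and restricts to the identity on a linear group. Theorem~\ref{slodowy} then applies to the pair $(f_0, G_0)$ and shows that $G_0$, hence $G \cong G_0$, is zero-dimensional.

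Neither route presents a genuine obstacle; the only points needing (routine) attention arise in the Slodowy route, where one must verify that finite $\mathscr{R}$-determinacy, the vanishing $j^2 f = 0$, and faithfulness of the action all survive the linearizing coordinate change --- all of which follow from the diffeomorphism-invariance of $m(n)^3$ together with lemma~\ref{basicobservations}. I would present the first route as the proof and keep the Slodowy route as a remark.
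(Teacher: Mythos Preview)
Both routes are correct, but you have them in the wrong order relative to the paper. The paper gives no explicit argument at all: the corollary carries only a \qed, and its placement---immediately after Slodowy's theorem~\ref{slodowy} and the remark that finite $\K$-determinacy implies finite $\mathscr{R}$-determinacy---signals that it is meant as the direct (non-linear) consequence of Slodowy obtained by linearizing $G$. That is exactly your second route, and the verifications you list (preservation of $j^2 f = 0$ under composition with a diffeomorphism, faithfulness of the linearized action via lemma~\ref{basicobservations}~iv)) are the right ones.

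Your first route is logically valid but structurally awkward here: the $p=1$ case of theorem~\ref{maxcomp} is precisely the theorem stated \emph{after} this corollary, and its proof is given there. So invoking theorem~\ref{maxcomp} at this point is a forward reference to a result not yet established. It is not circular---that subsequent proof appeals to Slodowy directly rather than to the corollary---but it inverts the paper's intended flow, in which the corollary is the $\mathscr{R}_f$ stepping-stone on the way to the $\K_f$ statement. I would therefore swap your presentation: give the Slodowy argument as the proof, and if you wish, note afterwards that once theorem~\ref{maxcomp} is in hand the corollary is subsumed by it via $\mathscr{R}_f < \K_f$.
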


\begin{thm}
Let $f$ be as in theorem~\ref{maxcomp} with $p=1$, and let $G < \K_f$ be a compact subgroup. Then $G$ is zero-dimensional.
\end{thm}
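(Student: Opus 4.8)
The plan is to reduce the $\K$-action, on the identity component of $G$, to an $\mathscr{R}$-action, and then quote the corollary above (a consequence of Theorem~\ref{slodowy}). First I would invoke the definition of a compact subgroup of $\K$: there is $\psi\in\K$ with $G_0 := \psi G\psi^{-1}$ a linear subgroup of $GL_n\times GL_1$. Conjugation by $\psi$ carries $\K_f$ isomorphically onto $\K_{f_0}$, where $f_0=\psi\cdot f$; since $\dim G_0=\dim G$ and $f_0$ — being $\K$-equivalent to $f$, hence of the form $u\cdot(f\circ\phi)$ for a unit $u$ and a diffeomorphism germ $\phi$ — is again finitely $\K$-determined with $j^2f_0=0$, I may replace $f$ by $f_0$ and assume $G$ is linear. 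For $H=(A,\lambda)\in GL_n\times GL_1$, unwinding the $\K$-action shows that $H\cdot f=f$ is equivalent to $f(Ax)=\lambda f(x)$.

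Next I would examine the identity component $G^0$. The second projection $\lambda\colon G\to GL_1=\R\setminus\{0\}$ is a continuous homomorphism, so $\lambda(G)$ is a compact subgroup of $\R^*$; as the only compact subgroup of $(\R,+)$ is trivial, $\lambda(G)\subseteq\{\pm 1\}$ is finite, and hence $\lambda(G^0)=\{1\}$. Thus every element of $G^0$ is of the form $(A,1)$ with $f(Ax)=f(x)$; in particular the first projection $G^0\to GL_n$ is injective (its kernel consists of pairs $(I,\lambda)$ with $\lambda=1$), and its image $\Gamma$ is a compact connected subgroup of $GL_n$ leaving $f$ invariant, i.e.\ $\Gamma<\mathscr{R}_f$.

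Finally, since $f$ is finitely $\K$-determined it is finitely $\mathscr{R}$-determined, and $j^2f=0$; so by the corollary above the compact subgroup $\Gamma$ of $\mathscr{R}_f$ is zero-dimensional. Being connected, $\Gamma=\{I\}$, so $G^0\cong\Gamma$ is trivial and $\dim G=\dim G^0=0$.

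The argument is essentially forced once set up; I expect the only points needing care to be the bookkeeping in the first step — that passing to the linear representative preserves $j^2f=0$ (it does, because multiplication by a unit and precomposition with a diffeomorphism germ both preserve the $m(n)$-adic filtration) and that $G^0\to GL_n$ is genuinely injective, which is precisely what makes Slodowy's faithfulness hypothesis (via the corollary) applicable.
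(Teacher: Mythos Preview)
Your proof is correct and follows the same overall approach as the paper: linearize $G$ inside $GL_n\times GL_1$ (preserving $j^2f=0$), observe that the second projection lands in the compact subgroup $\{\pm 1\}$ of $\R^\ast$, and then invoke Slodowy's theorem on the first factor.

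The only genuine difference is in the endgame. The paper projects $G$ to $\tilde G<GL_n$, partitions $\tilde G$ as the disjoint union of $\tilde G\cap\mathscr{R}_f$ and $H=\{g\in\tilde G\mid g\cdot f=-f\}$, notes the first piece is finite by Slodowy, and then rules out $\dim\tilde G\ge 1$ by a continuity argument: a path in $\tilde G$ from a point of $\mathscr{R}_f$ into $H$ would force $f(x)=-f(x)$ at some nearby $x$ with $f(x)\neq 0$. Your route via the identity component is more direct: since $\lambda(G^0)$ is a connected subgroup of the finite group $\{\pm 1\}$ it is trivial, so $G^0$ embeds in $\mathscr{R}_f$ and Slodowy finishes immediately. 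Your argument avoids the slightly ad hoc path construction and makes the role of connectedness transparent; the paper's version, on the other hand, yields the marginally stronger statement that the full projection $\tilde G$ (not just its identity component) is finite, though of course this also follows from your conclusion.

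One cosmetic point: with the paper's convention $(\phi,\psi)\cdot f=\psi\circ f\circ\phi$, the invariance condition reads $\lambda f(Ax)=f(x)$ rather than $f(Ax)=\lambda f(x)$; since $\lambda\in\{\pm 1\}$ this is immaterial to your argument.
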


\begin{proof}
Changing $f$ by a $\K$-equivalence, we may assume that $G$ acts linearly by lemma~\ref{basicobservations}. Note furthermore that changing $f$ by a $\K$-equivalence will not change the fact that $j^2f=0$.

Linear subgroups of $\K$ lie in $\A$ by lemma~\ref{basicobservations}; hence we can assume $G < GL_n \times GL_1=GL_n \times \R^\ast$. The projections from $GL_n \times \R^\ast$ onto $GL_n$ and $\R^\ast$ are continuous homomorphisms, and take $G$ to compact subgroups $\tilde{G} <  GL_n$ and $G_\R < \R^\ast$, respectively. Since $G_\R$ is a compact subgroup of $\R^\ast$, we must have $G_\R < \{\pm 1\}$.

Having this in mind, we see that $\tilde{G}$ splits into two parts, namely
\[
\tilde{G} \cap \mathscr{R}_f \quad \textrm{ and } H=\{g \in \tilde{G}|g \cdot f = -f\}.
\]
The group $\tilde{G} \cap \mathscr{R}_f$ is finite by theorem~\ref{slodowy}, but what about $H$? 

Since $\tilde{G}$ is a Lie group, we must either have $\tilde{G}$ discrete, or $\tilde{G} \cap \mathscr{R}_f \subset \partial_{\tilde{G}} H$ with $\dim H \ge 1$. Suppose the latter. Then we can form a continuous path $\gamma \colon I \to \tilde{G}$ such that $\gamma(0) \in \mathscr{R}_f \cap \partial_{\tilde{G}} H$ and $\gamma(t) \in H$ for $t \neq 0$. Then we have $\gamma(t) \stackrel{t \to 0}{\to} \gamma(0)$, and for any given $x \in (\R^n,0)$ we have $f(\gamma(t)(x)) \stackrel{t \to 0} {\to}f(\gamma(0)(x))$, since $G$ is a matrix group. But by the definitions of $H$ and $\mathscr{R}_f$, we have $f(\gamma(t)(x))=-f(x)$ when $t \neq 0$, while $f(\gamma(0)(x))=f(x)$, so unless $f(x)=0$, this must be false. We have $f(x) \neq 0$ for $x$ arbitrarily close to $0 \in \R^n$, and hence we cannot find such a path $\gamma$. But then $\tilde{G}$ must be discrete. Being a compact discrete set, $\tilde{G}$ is finite. \qed
\end{proof}

For $p \ge 2$, C.T.C.~Wall has proven an analogous result over the complex numbers:

\begin{thm} \emph{\cite[Theorem~3.3]{wallsymm}} \label{wallthm}
Let $f \colon (\C^n,0) \to (\C^p,0)$ have finite singularity type, $1 < p < n$, and $T_0f \equiv 0$. Then
\[
\dim G_f \le 1,
\]
where $G_f$ is a maximal complex reductive subgroup of $\K_f$. \qed
\end{thm}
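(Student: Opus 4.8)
The plan is to reduce to a linear, equivariant situation, translate finite singularity type into the finiteness of a graded module, and then run a weight-counting argument against the isolated-complete-intersection structure of $f^{-1}(0)$.

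First I would linearize. All maximal complex reductive subgroups of $\K_f$ are conjugate in $\K_f$, so I may fix one and call it $G=G_f$; its maximal compact real form is linearizable by Bochner's theorem, and the linearizing coordinates then linearize the whole complexification $G$. Combined with lemma~\ref{basicobservations}(ii), after a $\K$-equivalence I may assume $G$ acts linearly, i.e.\ through representations $\rho\colon G\to GL_n$ on the source and $\tau\colon G\to GL_p$ on the target, with $f$ equivariant: $f\circ\rho(g)=\tau(g)\circ f$. Since the corank of $df_0$ is a $\K$-invariant, the hypothesis $T_0f\equiv0$ survives, so $j^1f=0$. Differentiating equivariance, the Lie algebra $\mathfrak g$ embeds into the space of infinitesimal linear symmetries
\[
\Theta=\{(\xi,A)\;:\;\xi\ \text{a linear vector field on }\C^n,\ A\in\mathfrak{gl}_p,\ tf(\xi)=Af\},
\]
and $\dim G=\dim\mathfrak g$.

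Next I would use finite singularity type in the form $\dim_\C\mathscr N_f<\infty$, where $\mathscr N_f=\theta_f/\big(tf(\theta_{(\C^n,0)})+f^\ast(m(p))\theta_f\big)$ is exactly the normal module introduced in lemma~\ref{lem8}. Since $tf(\theta_{(\C^n,0)})$ and $I_f\theta_f=f^\ast(m(p))\theta_f$ (with $I_f=(f_1,\dots,f_p)$) are $G$-invariant submodules of $\theta_f=\E(n)^p$, the quotient $\mathscr N_f$ is a finite-dimensional $G$-module. Writing $Q=\E(n)/I_f$ for the (infinite-dimensional, since $\dim f^{-1}(0)=n-p\ge1$) local algebra of the isolated complete intersection singularity $V(f)=f^{-1}(0)$, one has $\mathscr N_f\cong Q^p/\langle\overline{\partial_1f},\dots,\overline{\partial_nf}\rangle_Q$, and its finiteness is precisely the statement that $V(f)$ is smooth away from $0$. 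Now decompose by a maximal torus $T\subset G$ of rank $r$: choose weight coordinates so that $\rho|_T$ and $\tau|_T$ have weights $\alpha_1,\dots,\alpha_n$ and $\beta_1,\dots,\beta_p$ in $X^\ast(T)\cong\Z^r$; equivariance together with $j^1f=0$ forces each $f_j$ to be a sum of monomials $x^a$ of degree $\ge2$ with $\sum_i a_i\alpha_i=\beta_j$, and $V(f)$ is $T$-invariant.

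The heart of the argument is to show $\dim G\le1$. A connected complex reductive group of dimension $\ge2$ must either contain a torus of rank $\ge2$, or be of rank $1$ with a three-dimensional semisimple part (a copy of $SL_2$ or $PGL_2$); it therefore suffices to exclude both. I would exclude them by the same mechanism: in each case I produce, from the weight data, a positive-dimensional stratum of $V(f)\setminus\{0\}$ along which the gradients $\overline{\partial_if}$ fail to span $\C^p$, equivalently an infinite family of $T$-weights surviving in $\mathscr N_f\cong Q^p/\langle\overline{\partial_if}\rangle_Q$ — contradicting $\dim_\C\mathscr N_f<\infty$. For a rank-$2$ torus the constraint $\sum_i a_i\alpha_i=\beta_j$ admits so few monomials in the $p\ge2$ components that the $n$ gradient vectors cannot cut $Q^p$ down to finite dimension; the toy computation $f=(xy,z^2)$ on $\C^3$, whose $\mathscr N_f$ is infinite-dimensional, is the model obstruction. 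For $SL_2$ the Cartan acts on the source with weights symmetric about $0$, and coupling this symmetry to the $p\ge2$ target directions again forces a singular stratum off the origin. The hypothesis $p\ge2$ is essential and is exactly what fails for $p=1$: there $\mathscr N_f$ collapses to the Tjurina algebra $\E(n)/\big((f)+J_f\big)$, and e.g.\ $f=\sum x_i^2$ remains finitely determined while admitting the full group $O(n,\C)$.

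The step I expect to be the main obstacle is this last weight-counting: proving uniformly — for every system of weights $(\alpha_i,\beta_j)$ compatible with $\dim_\C\mathscr N_f<\infty$ and $p\ge2$ — that a rank-$2$ torus or an $SL_2$ forces infinitely many surviving weights, i.e.\ a non-isolated singularity of $V(f)$. This is the genuinely combinatorial and representation-theoretic core of the theorem; reductivity of $G$ enters precisely in guaranteeing the $T$-module decomposition into weight spaces and the $G$-invariance of $tf(\theta_{(\C^n,0)})$ and $I_f\theta_f$ that legitimize the grading argument. Once this is established, $\dim G\le1$ follows at once, and since all maximal reductive subgroups are conjugate, the bound holds for $G_f$.
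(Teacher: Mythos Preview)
The paper does not prove this theorem. It is quoted verbatim from Wall \cite[Theorem~3.3]{wallsymm} and marked with a \qed; it functions purely as a black box input to the proof of theorem~\ref{maxcomp}. So there is no ``paper's own proof'' to compare against, and your proposal is really an attempt to reconstruct Wall's argument rather than anything in this article.

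On the substance of your outline: the opening moves --- linearizing a maximal compact form and hence $G$ itself, invoking lemma~\ref{basicobservations}(ii) to land in $\A_f$, and reformulating finite singularity type as $\dim_\C\mathscr N_f<\infty$ with $\mathscr N_f$ a $G$-module --- are all sound and are indeed how Wall sets things up. Your structural dichotomy (a connected reductive group of dimension $\ge 2$ contains either a rank-$2$ torus or a copy of $SL_2/PGL_2$) is correct and is the right reduction. However, you explicitly flag the weight-counting step as ``the main obstacle'' and do not carry it out; what you have written there is a heuristic (``so few monomials\ldots'', ``coupling this symmetry\ldots again forces a singular stratum'') rather than an argument. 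In Wall's paper this step is the entire content of the theorem and occupies a careful case analysis of the possible weight configurations on source and target; it is not something that can be waved through. So as a proof your proposal has a genuine gap at exactly the point where the real work lies, though as a \emph{plan} it is pointed in the right direction.
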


We shall pass from Wall's result to the corresponding claim over the real numbers from theorem~\ref{maxcomp}.

\begin{proof}[of theorem~\ref{maxcomp}]
Denote $G=MC(\K_f)$ for short. By lemma~\ref{basicobservations}, we may assume, up to a change of coordinates, that $f$ is a polynomial and that $G$ is linear. In particular,
\[
G < \K_f \cap \left( GL_n \times GL_p \right) < \A_f
\]
by lemma~\ref{basicobservations}. 

There is a corresponding complex polynomial
\[
f_\C \colon (\C^n,0) \to (\C^p,0)
\]
with the same (real) coefficients as $f$. Then $f_\C$ is finitely $\K$-determined as well \cite[Proposition~1.7]{wallfindet}, hence has FST.

Viewing $G$ as a subgroup of 
\[
GL(n, \R) \times GL(p, \R) < GL(n, \C) \times GL(p, \C),
\]
we denote by $G_\C$ the Zariski closure of $G$ in $GL(n, \C) \times GL(p, \C)$. By Schwarz \cite[2.2-2.6]{schwarz}, the set $G_\C$ is a reductive complex algebraic subgroup of the algebraic group $GL(n, \C) \times GL(p, \C)$, and if we write $\mathfrak{g}$ and $\mathfrak{g}_\C$ for the Lie algebras of $G$ and $G_\C$, respectively, then $\mathfrak{g}_\C= \mathfrak{g} + i \mathfrak{g}$.

We argue that $G_\C < \A_{f_\C}$, which will prove that $\dim_\C G_\C \le 1$ by Wall's theorem (that is, theorem~\ref{wallthm}).

The action of $G_\C$ on $(\C^n,0) \times (\C^p,0)$ is algebraic, and hence Zariski continuous. Viewing $G$ as a subset of $G_\C$ with the induced Zariski topology, the maps
\[
\begin{array}{ll}
\Phi \colon G \to \C^p, & g \mapsto (g \cdot f)_\C(z)\\
\Phi_\C \colon G_\C \to \C^p, & g_\C \mapsto (g_\C \cdot f_\C)(z)
\end{array}
\]
are Zariski continuous for any fixed $z \in \C^n$, and $\Phi_\C$ is a continuous extension of $\Phi$. 

The map $\Phi$ is constant, because $G < \A_f$ and hence $g \cdot f = f$ for all $g \in G$. But $G$ is Zariski dense in $G_\C$ by \cite{schwarz}, and points are closed in the Zariski topology on $\C^p$; hence $\Phi_\C$ must be constant as well. Since this holds for all $z \in \C^n$, it follows that $G_\C < \A_{f_\C}$, and $\dim G_\C \le 1$ by theorem~\ref{wallthm}.

Then $\dim_\C \mathfrak{g}_\C \le 1$, and since $\mathfrak{g}_\C=\mathfrak{g} + i \mathfrak{g}$, we must have $\dim_\R \mathfrak{g} \le 1$, and in particular $\dim_\R G \le 1$. \qed
\end{proof}

\subsubsection*{Example computations}

The results from section~\ref{oftensmall} allow us to efficiently compute maximal compact subgroups $\K_f$ for germs that are of particular interest to us, as illustrated in the example below. The argumentation will obviously carry over to a wide range of other cases.

\begin{ex}
Let $f \colon (\R^2,0) \to (\R,0)$ belong to the $E_{p,0}(\ast)$ or $Z_{p,0}(\ast)$-series of singularities; namely, let  $f$ be one of the map-germs
\[
\begin{array}{ll}
a) & (x,y) \mapsto x^3 + \lambda x y^{2p} + y^{3p},\\
b) & (x,y) \mapsto y(x^3 + \lambda x y^{2p} + y^{3p}),
\end{array}
\]
with $p>1$ and $\lambda \neq 0$. Then $f$ is a weighted homogeneous polynomial, and in particular, $f$ is $\R^\ast$-equivariant, so $\{\pm 1\}$ is a compact subgroup of $\A_f$. We show that $\{\pm 1\}$ is a maximal compact subgroup of $\A_f$ by showing that it is a maximal compact subgroup of $\K_f$. Below, we  give the argument for the germ $a)$; the argument for $b)$ is almost identical. See \cite[Theorem 97]{feragenthesis} for a more detailed account.

Let $G < \K_f$ be a maximal compact subgroup. Suppose that $(l, (h_1, h_2)) \in G$, which acts on $f$ by 
\[
(l, (h_1, h_2)) \cdot \tilde{f}(x,y)=l(x,y) \cdot \tilde{f}(h_1(x,y), h_2(x,y)). 
\]
We note that for elements of $\K_f$, the diffeomorphism $l$ is completely determined by $(h_1, h_2)$; in other words $G$ is determined by its action on the source space, and the projection
\[
p \colon \K=\mathscr{C} \rtimes \mathscr{R} \to \mathscr{R}, \quad (l, (h_1, h_2)) \mapsto (h_1, h_2),
\]
restricts to an injection on $\K_f$. Thus, it is enough to show that the set of pairs $h = (h_1, h_2)$, which can be part of an element in $\K_f$, is isomorphic to $\{\pm 1\}$. Since $j^1 G \cong G$, we can show this by investigating the $1$-jet of $h$, denoted
\[
j^1h=\left[\begin{array}{cc} \alpha & \beta\\ \gamma & \delta \end{array} \right].
\]

Since $h$ comes from an element of $\K_f$, we must have
\begin{itemize}
\item[a)] $\beta = 0$,
\item[b)] $\alpha, \delta \in \{ \pm 1\}$,
\item[c)] $j^1h=\left[\begin{array}{cc} 1 & 0\\ \gamma & 1 \end{array} \right]$ can only hold if $\gamma = 0$,
\item[d)] $j^1h=\left[\begin{array}{cc} -1 & 0\\ \gamma & 1 \end{array} \right]$ never holds,
\item[e)] $j^1h=\left[\begin{array}{cc} -1 & 0\\ \gamma & -1 \end{array} \right]$ can only hold if $p$ is odd,
\item[f)] $j^1h=\left[\begin{array}{cc} 1 & 0\\ \gamma & -1 \end{array} \right]$ can only hold if $p$ is even.
\end{itemize}

Here, a) follows from weighted homogeneity of $f$; b) and c) follows from the fact that $j^1(\pr_\mathscr{R}(G))$ is a finite matrix group so $h^k = \id$ for some $k \in \N_0$. To see d)-f), plug in polynomial expansions for $h_1$ and $h_2$ into the formula for $f$ and use weighted homogeneity of $f$ to deduce relations on the coefficients of the $h_i$, which can only be true under the conditions d)-f). Now we know that $G$ consists of the identity along with elements $(l, (h_1, h_2))$ such that $j^1h$ is of the form e) if $p$ is odd, or f) if $p$ is even. Again, using the fact that $G$ is finite, we see that in both cases, there is only one valid value of $\gamma$. Hence, $G \cong \Z_2$, and $\{\pm 1\} \cong \Z_2$ is a maximal compact subgroup of $\A_f$.
\end{ex}

\section*{Acknowledgements}
The authors wish to thank Hans Brodersen, Maria Aparecida Soares Ruas and David Trotman, whose careful reading detected a mistake in the original version of the argument.

This work was supported by the Magnus Ehrnrooth Foundation, as well as the Centre for Stochastic Geometry and Advanced Bioimaging, funded by a grant from the Villum Foundation. Aasa Feragen wishes to thank the Department of Mathematical Sciences at the University of Aarhus for its hospitality while the research leading to this article was carried out.

\end{document}